\newtheorem{definition}{Definition}[section]
\newtheorem{theorem}{Theorem}[section]
\newtheorem{corollary}{Corollary}[theorem]
\newtheorem{lemma}{Lemma}[section]
\newtheorem{proposition}{Proposition}[section]
\theoremstyle{remark}
\newtheorem{remark}{Remark}[section]
\def\T{{ \mathrm{\scriptscriptstyle T} }}
\newcommand{\R}{\mathbb R}
\newcommand{\Sp}{\mathbb S}
\newcommand{\bmu}{\boldsymbol{\mu}}
\newcommand{\E}{\mathbf {E}}
\newcommand{\pr}{\mathbb {P}}
\newcommand{\N}{\mathbb {N}}
\newcommand{\sign}{\mathrm{sgn}}
\providecommand{\keywords}[1]
{
  \small	
  \textbf{\textit{Keywords---}} #1
}
\title{The entropy based goodness of fit tests for generalized von Mises-Fisher distributions and beyond}
\author[1]{Nikolai~Leonenko}
\author[2]{Vitalii~Makogin \thanks{Corresponding author (e-mail: vitalii.makogin@uni-ulm.de).  The research was partially supported by DFG Grant 390879134.}}
\author[1]{Mehmet~Siddik~Cadirci}
\affil[1]{School of Mathematics, Cardiff University, Senghennydd~Road,~Cardiff,~Wales,~UK,~CF24~4AG.}
\affil[2]{Institute of  Stochastics, Ulm University, Ulm, 08069 Germany.}
\date{\today}
\begin{document}
\maketitle

\begin{abstract}
We introduce some new classes of unimodal rotational invariant directional distributions, which generalize von Mises--Fisher distribution. We propose three types of distributions, one of which represents axial data. For each new type we provide formulae and short computational study of parameter estimators by the method of moments and the method of maximum likelihood.  The main goal of the paper is to develop the goodness of fit test to detect that sample entries follow one of the introduced generalized von Mises--Fisher distribution based on the maximum entropy principle. We use $k$th nearest neighbour distances estimator of Shannon entropy and prove its $L^2$-consistency. We examine the behaviour of the test statistics, find critical values and compute power of the test  on simulated samples. We apply the goodness of fit test to  local fiber directions in a glass fibre reinforced composite material and detect the samples which follow axial generalized von Mises--Fisher distribution.
\end{abstract}
\keywords{Directional distribution, Generalized von Mises--Fisher distribution, Goodness of fit test, Entropy estimation, Maximum entropy principle, Nearest neighbour estimator.
}

\section{Introduction}
\label{sec:intro}
Directional distributions characterize randomness in unit vectors (directions). Spherical data sets appear in a wide range of problems arising from Earth sciences \citep{Pischiutta2013}, oceanography \citep{Wyatt97,shabani2016observations}, biology \citep{MOURITSEN2000}, physics \citep{Torrance66}, 
material science \citep{dresvyanskiy2020detecting}.

In this paper, we consider some classes of random unit vectors with values on sphere $\mathbb{S}^{d-1}=\{\mathbf{x}\in \R^d: \|\mathbf{x}\|=1\},$ which have the absolutely
continuous directional distributions with respect to the uniform distribution on $\Sp^{d-1}.$  We denote by $\mathbf{a}^{\T} \mathbf{b}$ the scalar product of vectors $\mathbf{a},\mathbf{b}\in \R^d$
and by $\|\mathbf{a}\|$ the Euclidean norm of $\mathbf{a}\in \R^d.$

The von Mises--Fisher distribution is a fundamental  isotropic distribution which is widely used in directional statistics \cite[e.g.][p. 168]{Mardia2000}. It belongs to the exponential family of distributions, is rotational invariant and has a density proportional to $\exp(\kappa \bmu^\T \mathbf{x}),\mathbf{x}\in \Sp^{d-1},$ such that random vectors are concentrated with rate $\kappa\in \R$ along direction $\bmu\in \Sp^{d-1}.$ Among several important properties of the von Mises--Fisher distribution we focus on maximum entropy characterization, that is, the von Mises--Fisher distribution has maximum entropy in the class of continuous distributions on $\Sp^{d-1}$ with a given value of $\E ({X})$  \citep[see][]{Mardia75}.

There are several generalizations, including the Fisher--Bingham distribution with a density proportional to 
$\exp(\kappa \bmu^\T \mathbf{x}+\mathbf{x}^\T A\mathbf{x}),\mathbf{x}\in \Sp^{d-1}$ \citep[see][]{Mardia75}, and  the generalized von Mises--Fisher
distribution of order $k$ (GvMFk) introduced in \citet{Gatto2007}, having
the density proportional to $\exp\left(\sum_{j=1}^k \kappa_j (\bmu_j \mathbf{x})^{r_j}\right),$ where $\bmu_j\in \Sp^{d-1},\kappa_j\in \R, r_j\in \N $  $(j=1,\ldots,k),$ and $r_1\leq\ldots\leq r_k.$ 

In this paper, we introduce a new generalization of the von Mises--Fisher
distribution, which stays in the exponential family and is rotational invariant with one mode. In contrast to the generalized von Mises--Fisher
distribution of order $k$ with integer power $r\in \N,$ we consider densities with arbitrary positive power $r\in \R_+.$ The motivation of such choice is to provide the analogue of a generalized Gaussian distribution for random vectors on the unit sphere. To do so we introduce the following three types of distributions of order $\alpha\in \R_+$, whose densities $f$ are proportional to 
\begin{align*}
     \text{ Type I, }& \mathrm{GvMF}_{1,d}(\alpha,\kappa,\bmu):&  f(\mathbf{x})&\propto\exp\left(\frac{\kappa}{\alpha} (\bmu^\T\mathbf{x})^{<\alpha>}\right),\mathbf{x}\in \Sp^{d-1},\\
    \text{ Type II, }& \mathrm{GvMF}_{2,d}(\alpha,\kappa,\bmu):&  f(\mathbf{x})&\propto\exp\left(\frac{\kappa}{2^{\alpha}\alpha} \|\mathbf{x}-\bmu\|^{2\alpha}\right), \mathbf{x}\in \Sp^{d-1},\\
    \text{ Axial Type, }& \mathrm{GvMF}_{3,d}(\alpha,\kappa,\bmu):&  f(\mathbf{x})&\propto\exp\left(\frac{\kappa}{\alpha} |\bmu^\T\mathbf{x}|^{\alpha}\right),\mathbf{x}\in \Sp^{d-1},
\end{align*}
where $\kappa>0$ is a concentration parameter, and $\bmu\in \Sp^{d-1}$ is a mean direction parameter.
In the paper we denote by $x^{<\alpha>}=|x|^{\alpha}\sign(x),x\in \R.$

Apart from studying the properties, simulations and parameter estimation for distributions $\mathrm{GvMF}_{j,d}$ $(j=1,2,3),$ we develop the goodness of fit test based on the estimation of the Shannon entropy and independent identically distributed (i.i.d.) sample. These tests exploit the maximum entropy principle, which is also proved in the paper as the spherical analogue of the results by \citet{Lutwak2004}.

To do so,  we employ the entropy estimators $\hat{H}_{N,k}$ derived from $k$th nearest neighbour distances.  Starting from the pioneering paper \citep{kozachenko1987}, which proves by direct probability methods the consistency of $\hat{H}_{N,1}$ for random vectors with values in Euclidean space, a large number of authors considered extending the class of admissible distributions and improved the convergence of $\hat{H}_{N,k},$ see \citet{berrett2019,Bulinski2019,delattreFournier2017,Evans2008,Evans2002,GaoOh2018,Goria2005,leonenkoPronzatoSavani2008}, and the references therein. In  \citep{li2011,MisraSH2010}, the $k$th nearest neighbour entropy estimation is generalized for hyperspherical distributions.

Unlike the above mentioned works, the limit theory for point processes with a fixed $k$ allows to prove the $L^p$-consistency of functionals of $k$th nearest neighbour distances for a wider class of distributions. The nearest neighbours method of estimation of the Shannon entropy for manifolds, including spheres, was developed by  \citet{PY3_2013}.
In the present paper, we continue their work and prove the $L^2$-consistency of $\hat{H}_{N,k},$ as $N\to \infty$ with arbitrary $k\geq 1$ and for a  random vector on a Riemannian manifold if it density is bounded and has compact support, see Theorem \ref{thm:L2}. Therefore, we show that $\hat{H}_{N,k}$ is a consistent estimator for the samples from the introduced generalized von Mises--Fisher distributions.

From the recent papers, we mention \citep{berrett2019}, where the efficient entropy estimation is provided via the weighted $k$th nearest neighbour distances with $k=k_N$ depending on sample size $N.$ Moreover, \citet{Berrett:bio19} introduced a non-parametric entropy based test of independence for multidimensional data.   \citet{Jammalamadaka2000} considered the entropy based test of goodness of fit for the von Mises distribution on the circle and use a different entropy estimate.  Our study is motivated, particularly, by the work of \citet{Mehmet}, where the entropy based goodness of fit test for generalized Gaussian distribution is given.

We verify our theoretical results by computational study on simulated samples and show the inflation of variances of $\hat{H}_{N,k}$ as $k$ grows, which confirms the conclusion of \citet{berrett2019}. Moreover, we detect the evidence of generalized von Mises--Fisher distributions in real world data by the presented entropy based goodness of fit test. Particularly, we find the evidence in  3D images of a glass fibre reinforced composite material, where fiber directions follow a generelized von Mises--Fisher distribution of axial type.

The manuscript is organized as follows. In Section 2, we revise the basic facts for von Mises-Fisher distribution. In Section 3,  we introduce our three types of generalized von Mises-Fisher distribution and compute their moments (Section 3.1). Section 4 is devoted to the Shannon entropy of generalized von Mises-Fisher distributions and we show the maximum entropy principle for them in Section 4.1. Then we discuss the statistical estimation of an entropy and prove the $L^2$ convergence of $k$th nearest neighbour estimator for random variables on compact manifolds (Section 4.2). In section 5, we formulate the maximum likelihood estimators (Section 5.1) and estimators by the method of moments for distributions $\mathrm{GvMF}_{1,d},\mathrm{GvMF}_{2,d},$  and $\mathrm{GvMF}_{3,d}$ (Section 5.2). In Section 6 we develop goodness of fit tests based on the maximum entropy principle for $\mathrm{GvMF}_{2,d}$ (Section 6.1) and $\mathrm{GvMF}_{1,d}, \mathrm{GvMF}_{3,d}$ (Section 6.2) distributions. Results of numerical experiments on simulated samples are given in Section 7. We present the method of simulations in Section 7.1, parameter estimation in Section 7.2, entropy estimation in Section 3.7, and study of the test statistics in Section 7.4. In Section 8, we detect the generalized von Mises-Fisher distributions in a real data set.   Some auxiliary material is given in the Appendix.






\section{Preliminaries}
In this section we provide some known facts needed for the sequel.
Let $\sigma(d\mathbf{x})$ be spherical measure on the sphere $\mathbb{S}^{d-1}.$ It can be written in polar coordinates $\mathbf{x}=(1,\mathbf{u}),1>0,\mathbf{u}\in \mathbb{S}^{d-1}$ as $$\sigma(d\mathbf{x})=\frac{\Gamma\left(\frac{d}{2}\right)}{2\pi^{d/2}} d \mathbf{u}.$$
\begin{lemma}[{\cite[Lemma 2.5.1]{KaiTai}}]
\label{lmm1}
Let $g:\R\to \R_+$ be a non-negative Borel function, $c>0$ and $\mathbf{a}=(a_1,\ldots,a_m)\neq \mathbf{0}.$ Then
\begin{equation}
    \label{lmm1:eq1}
    \int_{\mathbf{x}^\T\mathbf{x}=c^2} g(\mathbf{a}^\T \mathbf{x})\sigma(d\mathbf{x})=\frac{2c \pi^{\frac{d-1}{2}}}{\Gamma\left(\frac{d-1}{2}\right)}\int_{-c}^{c}g(\|a\|y)(c^2-y^2)^{\frac{d-3}{2}}dy.
\end{equation}
\end{lemma}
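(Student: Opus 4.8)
The plan is to reduce the surface integral to a one-dimensional integral by exploiting two features of the problem: the rotational invariance of the spherical measure $\sigma$, and the fact that the integrand depends on $\mathbf{x}$ only through the single linear form $\mathbf{a}^\T\mathbf{x}$. First I would choose an orthogonal matrix $Q$ with $Q\mathbf{a}=\|\mathbf{a}\|\,\mathbf{e}_1$, where $\mathbf{e}_1=(1,0,\ldots,0)^\T$. Substituting $\mathbf{x}\mapsto Q^\T\mathbf{x}$ and using that $\sigma$ is invariant under orthogonal maps and that the sphere $\{\mathbf{x}^\T\mathbf{x}=c^2\}$ is mapped onto itself, the integrand becomes $g(\mathbf{a}^\T Q^\T\mathbf{x})=g(\|\mathbf{a}\|\,x_1)$, so it suffices to prove
\begin{equation*}
\int_{\mathbf{x}^\T\mathbf{x}=c^2} g(\|\mathbf{a}\|\,x_1)\,\sigma(d\mathbf{x})=\frac{2c\,\pi^{\frac{d-1}{2}}}{\Gamma\!\left(\frac{d-1}{2}\right)}\int_{-c}^{c}g(\|\mathbf{a}\|y)(c^2-y^2)^{\frac{d-3}{2}}\,dy .
\end{equation*}
Since $g$ is non-negative and Borel, Tonelli's theorem applies throughout and no integrability hypotheses beyond non-negativity are needed.

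Next I would disintegrate $\sigma$ along the height $y=x_1\in[-c,c]$. The level set $\{x_1=y\}\cap\{\mathbf{x}^\T\mathbf{x}=c^2\}$ is a $(d-2)$-sphere of radius $\rho(y)=\sqrt{c^2-y^2}$, which I parametrize as $\mathbf{x}=(y,\rho(y)\,\omega)$ with $\omega\in\Sp^{d-2}$. Computing the induced metric of this map shows it is block diagonal: the $y$-block equals $c^2/(c^2-y^2)$ (because $\partial_y\mathbf{x}=(1,-y\rho(y)^{-1}\omega)$ has squared length $1+y^2/(c^2-y^2)$), the cross terms vanish (as $\omega\perp\partial\omega$ on the unit sphere), and the $\omega$-block is $(c^2-y^2)$ times the metric of $\Sp^{d-2}$. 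Taking the square root of the determinant gives the disintegration
\begin{equation*}
\sigma(d\mathbf{x})=\frac{c}{\sqrt{c^2-y^2}}\,(c^2-y^2)^{\frac{d-2}{2}}\,dy\,\sigma_{d-2}(d\omega)=c\,(c^2-y^2)^{\frac{d-3}{2}}\,dy\,\sigma_{d-2}(d\omega),
\end{equation*}
where $\sigma_{d-2}$ is the surface measure on the unit sphere $\Sp^{d-2}$. Equivalently, one may invoke the coarea formula for the projection $\mathbf{x}\mapsto x_1$ restricted to the sphere, whose tangential gradient has norm $\sqrt{c^2-y^2}/c$, yielding the same factor.

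Finally, because the integrand $g(\|\mathbf{a}\|x_1)=g(\|\mathbf{a}\|y)$ does not depend on $\omega$, the inner integral over $\Sp^{d-2}$ contributes only its total mass $\sigma_{d-2}(\Sp^{d-2})=2\pi^{\frac{d-1}{2}}/\Gamma\!\left(\frac{d-1}{2}\right)$, and multiplying by the factor $c$ from the disintegration produces exactly the stated constant. As a consistency check one can take $g\equiv1$: the right-hand side reduces via the Beta integral $\int_{-c}^{c}(c^2-y^2)^{(d-3)/2}\,dy=c^{d-2}\Gamma(1/2)\Gamma((d-1)/2)/\Gamma(d/2)$ to $c^{d-1}\cdot 2\pi^{d/2}/\Gamma(d/2)$, the correct surface area of the radius-$c$ sphere. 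I expect the main technical obstacle to be the rigorous justification of the measure disintegration in the second step, namely verifying that the Jacobian factor is $c(c^2-y^2)^{(d-3)/2}$ and that the slicing parametrization is a legitimate change of variables up to the measure-zero poles $y=\pm c$; everything else is rotational invariance together with a one-line Tonelli argument.
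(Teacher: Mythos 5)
Your argument is correct and complete. Note that the paper itself offers no proof of this lemma: it is quoted verbatim from the reference \citep[Lemma~2.5.1]{KaiTai}, so there is nothing in the manuscript to compare against line by line. Your route --- an orthogonal change of variables sending $\mathbf{a}$ to $\|\mathbf{a}\|\mathbf{e}_1$, followed by slicing the sphere $\{\mathbf{x}^\T\mathbf{x}=c^2\}$ along the first coordinate --- is the standard derivation and is exactly what the cited source does. All the computational details check out: the induced metric of the parametrization $\mathbf{x}=(y,\sqrt{c^2-y^2}\,\omega)$ is indeed block diagonal with $y$-block $c^2/(c^2-y^2)$ and angular block $(c^2-y^2)$ times the metric of $\Sp^{d-2}$, yielding the Jacobian $c\,(c^2-y^2)^{(d-3)/2}$; integrating out $\omega$ produces the constant $2\pi^{(d-1)/2}/\Gamma\bigl(\tfrac{d-1}{2}\bigr)$; Tonelli covers measurability since $g\geq 0$; and your sanity check with $g\equiv 1$ recovers the surface area $2\pi^{d/2}c^{d-1}/\Gamma(d/2)$. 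One cosmetic caveat worth flagging (a defect of the paper, not of your proof): the formula as stated requires $\sigma$ to be the \emph{unnormalized} surface measure, whereas the paper's earlier display $\sigma(d\mathbf{x})=\frac{\Gamma(d/2)}{2\pi^{d/2}}d\mathbf{u}$ suggests a normalized one; your consistency check implicitly and correctly resolves this in favor of the unnormalized convention. Likewise $\mathbf{a}=(a_1,\ldots,a_m)$ in the statement should read $m=d$.
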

In particular, if $\mathbf{a}\in \Sp^{d-1},$ then 
\begin{equation}
    \label{lmm1:eq2}\int_{\mathbf{x}^\T\mathbf{x}=1} g(\mathbf{a}^\T \mathbf{x})\sigma(d\mathbf{x})=\frac{2 \pi^{\frac{d-1}{2}}}{\Gamma\left(\frac{d-1}{2}\right)}\int_{-1}^{1}g(y)(1-y^2)^{\frac{d-3}{2}}dy.
\end{equation}

\begin{definition}
\label{def:mf1}
A unit random vector $\pmb{\xi}$ has the $(d-1)$-dimensional {\it von Mises-Fisher distribution} $\mathrm{vMF}_{1,d}(\bmu,\kappa)$ if its probability density function with respect to the uniform distribution is 
\begin{equation}
    \label{def:mf1:eq} f_{\pmb{\xi}}(\mathbf{x})=\left(\frac{\kappa}{2}\right)^2 \frac{1}{\Gamma\left(\frac{d}{2}\right)I_{\frac{d}{2}-1}(\kappa)}\exp(\kappa \bmu^\T\mathbf{x}), \mathbf{x}\in  \Sp^{d-1},
\end{equation}
where $\kappa \in \R,$ $\bmu\in \Sp^{d-1}$ and $I_{\nu}$ is the modified Bessel function of order $\nu\geq 0.$
\end{definition}
In the case $d=3$ von Mises-Fisher distribution $M_{1,3}(\bmu,\kappa)$ is called {\it Fisher distribution}. In this case formula \eqref{def:mf1:eq} simplifies to $f_{\pmb{\xi}}(\mathbf{x})= \frac{\kappa}{2 \sinh \kappa}\exp(\kappa \bmu^\T\mathbf{x}), \mathbf{x}\in \Sp^{2}.$

The density of  $M_{1,d}(\bmu,\kappa)$ can be written in the alternative form.
\begin{definition}
\label{def:mf2}
A unit random vector $\pmb{\xi}$ has the $(d-1)$-dimensional {\it von Mises-Fisher distribution} $\mathrm{vMF}_{2,d}(\bmu,\kappa)$ if its probability density function with respect to the uniform distribution is 
\begin{equation}
    \label{def:mf2:eq} f_{\pmb{\xi}}(\mathbf{x})=\left(\frac{\kappa}{2}\right)^2 \frac{e^{\kappa}}{\Gamma\left(\frac{d}{2}\right)I_{\frac{d}{2}-1}(\kappa)}\exp\left(-\frac{\kappa}{2}\|\mathbf{x}-\bmu\|^2\right), \mathbf{x}\in  \Sp^{d-1},
\end{equation}
where $\kappa \in \R,$ $\bmu\in \Sp^{d-1}.$
\end{definition}
Indeed, $\frac{\kappa}{2}\|\mathbf{x}-\bmu\|^2=\frac{\kappa}{2}\|\mathbf{x}\|^2+\frac{\kappa}{2}\|\bmu\|^2-\kappa\bmu^\T \mathbf{x}=\kappa-\kappa\bmu^\T \mathbf{x}$ for $\mathbf{x},\bmu \in \Sp^{d-1}.$
Particularly, the density of  distribution $M_{2,3}(\bmu,\kappa)$ equals $\frac{\kappa}{1-\exp(-2\kappa)}\exp\left(-\frac{\kappa}{2}\|\mathbf{x}-\bmu\|^2\right), \mathbf{x}\in \Sp^{2}.$

Let us recall the standard directional statistics. 
\begin{definition}
\label{def:mdir}
Let $\mathbf{X}$ be random vector with values in $\Sp^{d-1}$ and $\E \mathbf{X}\neq \mathbf{0}.$ A mean direction of $\mathbf{X}$ is a vector $\frac{\E \mathbf{X}}{\|\E \mathbf{X}\|}.$  A mean resultant length is $\|\E \mathbf{X}\|.$
\end{definition}
The mean resultant length is invariant and the mean direction is equivariant under rotation. Formally, let $U\in SO(d)$ be a rotation matrix, then 
$\|\E U\mathbf{X}\|=\|\E \mathbf{X}\|$ and $\frac{\E U\mathbf{X}}{\|\E U\mathbf{X}\|}=U\frac{\E \mathbf{X}}{\|\E \mathbf{X}\|}.$

Consider the class of distributions on $\Sp^{d-1}$ with   rotational symmetry, that is their distribution functions have  a form  $f(\mathbf{x})=g(\bmu^\T\mathbf{x}),$ $\mathbf{x},\bmu\in \Sp^{d-1},$ e.g. \citep{bingham1975maximum}. Such random vectors $\mathbf{X}$ posses a {\it tangent-normal decomposition }\begin{equation}
    \label{tang:dec}
    \mathbf{X}=(\bmu^\T\mathbf{X})\bmu + \sqrt{1-(\bmu^\T\mathbf{X})^2}\mathbf{Y},
\end{equation}
where $\bmu^\T\mathbf{X}$ and $\mathbf{Y}$ are independent, $\bmu\perp \mathbf{Y},$ and $\mathbf{Y}$ is uniformly distributed on $\Sp^{d-2}.$
It follows from \eqref{tang:dec}, that the mean resultant length is $\|\E \mathbf{X}\|=\E[ \bmu^\T\mathbf{X}]$ and the mean direction equals $\bmu.$

\section{Generalized von Mises-Fisher distributions}

In this section we introduce our generalizations of the von Mises-Ficher distribution. We call $\kappa\in \R$ a concentration parameter and $\bmu\in \Sp^{d-1}$ a mean direction parameter.
\begin{definition}
\label{def:gmf1}
A unit random vector $\pmb{\xi}$ has the $(d-1)$-dimensional {\it I-type generalized von Mises-Fisher distribution} $\mathrm{GvMF}_{1,d}(\alpha,\kappa,\bmu)$ of order $\alpha>0$ if its probability density function with respect to the uniform distribution is 
\begin{equation}
    \label{def:gmf1:eq} f_{\pmb{\xi}}(\mathbf{x})=c_{1,d}(\kappa,\alpha)\exp\left(\frac{\kappa}{\alpha} (\bmu^\T\mathbf{x})^{<\alpha>} \right), \mathbf{x}\in  \Sp^{d-1},
\end{equation}
where 
\begin{equation}
    c_{1,d}(\kappa,\alpha)=\left(\frac{2 \pi^{\frac{d-1}{2}}}{\Gamma\left(\frac{d-1}{2}\right)}\int_{0}^{1}\left(e^{\frac{\kappa}{\alpha}y^\alpha}+e^{-\frac{\kappa}{\alpha}y^\alpha}\right)(1-y^2)^{\frac{d-3}{2}}dy\right)^{-1}.
\end{equation}
\end{definition}

As an analogue of von Mises-Fisher distribution in the form $\mathrm{vMF}_{2,d}$, we introduce the following class.
\begin{definition}
\label{def:gmf2}
A unit random vector $\pmb{\xi}$ has the $(d-1)$-dimensional {\it II-type generalized von Mises-Fisher distribution} $\mathrm{GvMF}_{2,d}(\alpha,\kappa,\bmu)$ of order $\alpha>0$ if its probability density function with respect to the uniform distribution is 
\begin{equation}
    \label{def:gmf2:eq} f_{\pmb{\xi}}(\mathbf{x})=c_{2,d}(\kappa,\alpha)\exp\left(-\frac{\kappa}{2^{\alpha}\alpha}\|\mathbf{x}-\bmu\|^{2\alpha}\right), \mathbf{x}\in  \Sp^{d-1},
\end{equation}
where 
\begin{equation}
c_{2,d}(\kappa,\alpha)=\left(\frac{2 \pi^{\frac{d-1}{2}}}{\Gamma\left(\frac{d-1}{2}\right)}\int_{0}^{1}\left(e^{-\frac{\kappa}{\alpha}\left(1-y\right)^{\alpha}}+e^{-\frac{\kappa}{\alpha}\left(1+y\right)^{\alpha}}\right)(1-y^2)^{\frac{d-3}{2}}dy\right)^{-1}.
\end{equation}
\end{definition}
In the case of $\alpha=1,$ the introduced distributions $\mathrm{GvMF}_{1,d}$ and $\mathrm{GvMF}_{2,d}$ become the von Mises-Fisher distributions $\mathrm{vMF}_{1,d}$ and $\mathrm{vMF}_{2,d}$ respectively.

If we do not distinguish opposite directions we deal with axec. Commonly used technique in this case is to consider symmetric density functions $f$ such that $f(\mathbf{x})=f(-\mathbf{x}),\mathbf{x}\in \Sp^{d-1}.$ Since our motivation is to stay in the class of rotational invariant densities and to generalize the von Mises-Fisher distribution, we propose the following model for an axial data.
\begin{definition}
A unit random vector $\pmb{\xi}$ has the $(d-1)$-dimensional  axial generalized von Mises-Fisher distribution $\mathrm{GvMF}_{3,d}(\alpha,\kappa,\bmu)$ (or distribution of axial type) of order $\alpha>0$ if its probability density function with respect to the uniform distribution is 
\begin{equation}
    \label{def:gmf3:eq} f_{\pmb{\xi}}(\mathbf{x})=c_{3,d}(\kappa,\alpha)\exp\left(\frac{\kappa}{\alpha}|\bmu^\T\mathbf{x}|^{\alpha}\right), \mathbf{x}\in  \Sp^{d-1},
\end{equation}
where $\kappa\in \R,$ $\bmu\in \Sp^{d-1}$ and
\begin{equation}
c_{3,d}(\kappa,\alpha)= \left(\frac{4 \pi^{\frac{d-1}{2}}}{\Gamma\left(\frac{d-1}{2}\right)}\int_{0}^{1}e^{\frac{\kappa}{\alpha}y^\alpha}(1-y^2)^{\frac{d-3}{2}}dy\right)^{-1}.
\end{equation}
\end{definition}
\begin{remark}
Let us check that $\int_{\Sp^{d-1}}f_{\pmb{\xi}}(\mathbf{x})\sigma(d \mathbf{x})=1.$ Then, the constant $c_{1,d}(\kappa,\alpha)$ equals
\begin{align*}
    &\left(\int_{\Sp^{d-1}}\exp\left(\frac{\kappa}{\alpha} (\bmu^\T\mathbf{x})^{<\alpha>}\right)\sigma(d \mathbf{x})\right)^{-1} \\
    & \stackrel{\eqref{lmm1:eq2}}{=}
    \left(\frac{2 \pi^{\frac{d-1}{2}}}{\Gamma\left(\frac{d-1}{2}\right)}\int_{-1}^{1}\exp\left(\frac{\kappa}{\alpha} y^{<\alpha>}\right)(1-y^2)^{\frac{d-3}{2}}dy\right)^{-1}\\
    &=\left(\frac{2 \pi^{\frac{d-1}{2}}}{\Gamma\left(\frac{d-1}{2}\right)}\int_{0}^{1}\left(e^{\frac{\kappa}{\alpha}y^\alpha}+e^{-\frac{\kappa}{\alpha}y^\alpha}\right)(1-y^2)^{\frac{d-3}{2}}dy\right)^{-1}.
\end{align*}
Similarly, the constant $c_{2,d}(\kappa,\alpha)$ equals
\begin{align*}
    &\left(\int_{\Sp^{d-1}}\exp\left(-\frac{\kappa}{2^{\alpha}\alpha}\|\mathbf{x}-\bmu\|^{2\alpha}\right)\sigma(d \mathbf{x})\right)^{-1} =\left(\int_{\Sp^{d-1}}\exp\left(-\frac{\kappa}{\alpha}\left(1-\bmu^\T \mathbf{x}\right)^{\alpha}\right)\sigma(d \mathbf{x})\right)^{-1}\\
    & \stackrel{\eqref{lmm1:eq2}}{=}
    \left(\frac{2 \pi^{\frac{d-1}{2}}}{\Gamma\left(\frac{d-1}{2}\right)}\int_{-1}^{1}\exp\left(-\frac{\kappa}{\alpha}\left(1-y\right)^{\alpha}\right)(1-y^2)^{\frac{d-3}{2}}dy\right)^{-1}\\
    &=\left(\frac{2 \pi^{\frac{d-1}{2}}}{\Gamma\left(\frac{d-1}{2}\right)}\int_{0}^{1}\left(e^{-\frac{\kappa}{\alpha}\left(1-y\right)^{\alpha}}+e^{-\frac{\kappa}{\alpha}\left(1+y\right)^{\alpha}}\right)(1-y^2)^{\frac{d-3}{2}}dy\right)^{-1}.
\end{align*}
\end{remark}
In the case $\alpha=2,$ the generalized von Mises-Fisher distribution of axial type reduces to the Watson distribution. 
\subsection{Moments}
In this section we consider moments characteristics of $\mathrm{GvMF}_{j,d}(\alpha,\kappa,\bmu), j=1,2,3$ distributions.
Denote by
\begin{align}
 \label{def:A1}   A_{1,d}(\kappa,\alpha,\beta)&=\int_{0}^{1}   e^{\frac{\kappa}{\alpha}y^\alpha}y^\beta(1-y^2)^{\frac{d-3}{2}}dy, \\
 \label{def:A2}   A_{2,d}(\kappa,\alpha,\beta)&=\int_{0}^{2} e^{-\frac{\kappa}{\alpha}y^{\alpha}} (2-y)^{\frac{d-3}{2}}y^{\frac{d-3}{2}+\beta}dy.
\end{align}
\begin{proposition}
\label{prop1}
Let $\beta\geq 0$ and $\mathbf{X}\sim \mathrm{GvMF}_{1,d}(\alpha,\kappa,\bmu),$ then
\begin{equation}
\label{GM1:m1}
    \E\left( (\bmu^\T\mathbf{X})^{<\beta>}\right)=\frac{ A_{1,d}(\kappa,\alpha,\beta)- A_{1,d}(-\kappa,\alpha,\beta)}    { A_{1,d}(\kappa,\alpha,0)+ A_{1,d}(-\kappa,\alpha,0)}.
\end{equation}
\end{proposition}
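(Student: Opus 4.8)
The plan is to compute the expectation directly from the density, reduce the spherical integral to a one–dimensional one via Lemma \ref{lmm1}, and then exploit the parity hidden in the notation $x^{<\cdot>}$. First I would write, using Definition \ref{def:gmf1},
\begin{equation*}
\E\left((\bmu^\T\mathbf{X})^{<\beta>}\right)=c_{1,d}(\kappa,\alpha)\int_{\Sp^{d-1}}(\bmu^\T\mathbf{x})^{<\beta>}\exp\left(\frac{\kappa}{\alpha}(\bmu^\T\mathbf{x})^{<\alpha>}\right)\sigma(d\mathbf{x}),
\end{equation*}
and apply the identity \eqref{lmm1:eq2} with the non-negative Borel function $g(y)=y^{<\beta>}\exp\left(\frac{\kappa}{\alpha}y^{<\alpha>}\right)$, which is legitimate since $\bmu\in\Sp^{d-1}$. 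This collapses the integral over the sphere into $\frac{2\pi^{(d-1)/2}}{\Gamma((d-1)/2)}\int_{-1}^{1}g(y)(1-y^2)^{(d-3)/2}\,dy$.

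The \emph{key step} is to split this integral at the origin and substitute $y\mapsto -y$ on the negative part. The point is that both powers are odd functions: $y^{<\beta>}$ equals $y^{\beta}$ for $y>0$ and $-|y|^{\beta}$ for $y<0$, and likewise $(-y)^{<\alpha>}=-y^{<\alpha>}$, so that reflecting the interval flips the sign of the argument of the exponential, effectively sending $\kappa$ to $-\kappa$. Consequently the contribution of $[0,1]$ is exactly $A_{1,d}(\kappa,\alpha,\beta)$, while the contribution of $[-1,0]$ becomes $-A_{1,d}(-\kappa,\alpha,\beta)$, where the overall minus sign is precisely the oddness of $y^{<\beta>}$. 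Hence the numerator integral equals $A_{1,d}(\kappa,\alpha,\beta)-A_{1,d}(-\kappa,\alpha,\beta)$, up to the common prefactor $\frac{2\pi^{(d-1)/2}}{\Gamma((d-1)/2)}$.

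Finally I would handle the normalising constant the same way: from its defining formula, $c_{1,d}(\kappa,\alpha)^{-1}=\frac{2\pi^{(d-1)/2}}{\Gamma((d-1)/2)}\left(A_{1,d}(\kappa,\alpha,0)+A_{1,d}(-\kappa,\alpha,0)\right)$, the plus sign now reflecting that the weight with $\beta=0$ is even rather than odd. Dividing, the factor $\frac{2\pi^{(d-1)/2}}{\Gamma((d-1)/2)}$ cancels and the stated ratio follows. The computation is essentially routine; the only genuine obstacle is the sign bookkeeping under $y\mapsto-y$, namely that $(-y)^{<\alpha>}=-y^{<\alpha>}$ turns $e^{\frac{\kappa}{\alpha}y^{<\alpha>}}$ into $e^{-\frac{\kappa}{\alpha}y^{\alpha}}$ on the reflected interval, which is what produces the $-\kappa$ arguments in $A_{1,d}$. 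Integrability is not an issue, since for every $\beta\ge 0$ the integrand is continuous on the compact interval $[-1,1]$.
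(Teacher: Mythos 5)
Your argument is correct and coincides with the paper's own proof: both reduce the spherical integral to $[-1,1]$ via \eqref{lmm1:eq2} and then split at the origin, using the oddness of $y^{<\beta>}$ and $y^{<\alpha>}$ to produce the $\pm\kappa$ arguments of $A_{1,d}$ and cancel the common prefactor against the normalising constant. The only cosmetic slip is calling $g(y)=y^{<\beta>}e^{\frac{\kappa}{\alpha}y^{<\alpha>}}$ non-negative --- it is not on $(-1,0)$, but \eqref{lmm1:eq2} extends to it by linearity (splitting into positive and negative parts), exactly as the paper implicitly does.
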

\begin{proof}
Let $f$ be the density of the form \eqref{def:gmf1:eq}, then
\begin{align*}
    &\E\left( (\bmu^\T\mathbf{X})^{<\beta>} \right)=\int_{\Sp^{d-1}} (\bmu^\T\mathbf{x})^{<\beta>} f(\mathbf{x})\sigma(d\mathbf{x})\\
    &\stackrel{\eqref{lmm1:eq2}}{=}c_{1,d}(\kappa,\alpha)\frac{2 \pi^{\frac{d-1}{2}}}{\Gamma\left(\frac{d-1}{2}\right)}\int_{-1}^{1} y^{<\beta>} \exp\left(\frac{\kappa}{\alpha} y^{<\alpha>} \right)(1-y^2)^{\frac{d-3}{2}}dy\\
    &=c_{1,d}(\kappa,\alpha)\frac{2 \pi^{\frac{d-1}{2}}}{\Gamma\left(\frac{d-1}{2}\right)}\int_{0}^{1}  \left( e^{\frac{\kappa}{\alpha}y^\alpha}-e^{-\frac{\kappa}{\alpha}y^\alpha} \right)y^\beta(1-y^2)^{\frac{d-3}{2}}dy\\
    &=\frac{\int_{0}^{1}  \left( e^{\frac{\kappa}{\alpha}y^\alpha}-e^{-\frac{\kappa}{\alpha}y^\alpha} \right)y^\beta(1-y^2)^{\frac{d-3}{2}}dy}
    {\int_{0}^{1}  \left( e^{\frac{\kappa}{\alpha}y^\alpha}+e^{-\frac{\kappa}{\alpha}y^\alpha} \right)(1-y^2)^{\frac{d-3}{2}}dy}.
\end{align*}
\end{proof}
Particularly, the mean direction of $\mathbf{X}\sim \mathrm{GvMF}_{1,d}(\alpha,\kappa,\bmu)$ is $\bmu$ and its mean resultant length   equals \begin{equation}
\label{GM2:m1}     \|\E[\mathbf{X}]\|=\E[\bmu^\T \mathbf{X}]=\frac{ A_{1,d}(\kappa,\alpha,1)- A_{1,d}(-\kappa,\alpha,1)}    { A_{1,d}(\kappa,\alpha,0)+ A_{1,d}(-\kappa,\alpha,0)}.
    \end{equation}
\begin{proposition}
\label{prop2}
Let $\beta\geq 0$ and $\mathbf{X}\sim \mathrm{GvMF}_{2,d}(\alpha,\kappa,\bmu),$ then
\begin{equation}
\label{prop2:eq1}
    \E \|\mathbf{X}-\bmu\|^{2\beta} =2^\beta\frac{A_{2,d}(\kappa,\alpha,\beta)}{A_{2,d}(\kappa,\alpha,0)}.
\end{equation}
\end{proposition}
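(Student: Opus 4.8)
The plan is to mirror the computation in Proposition~\ref{prop1}, the only new ingredient being a change of variables that matches the integration domain $[0,2]$ appearing in the definition \eqref{def:A2} of $A_{2,d}$. First I would expand the squared norm using $\mathbf{x},\bmu\in\Sp^{d-1}$, so that $\|\mathbf{x}-\bmu\|^2 = \|\mathbf{x}\|^2 + \|\bmu\|^2 - 2\bmu^\T\mathbf{x} = 2(1-\bmu^\T\mathbf{x})$. Consequently $\|\mathbf{x}-\bmu\|^{2\beta} = 2^\beta(1-\bmu^\T\mathbf{x})^\beta$, and the exponent in the density \eqref{def:gmf2:eq} rewrites as $\frac{\kappa}{2^\alpha\alpha}\|\mathbf{x}-\bmu\|^{2\alpha} = \frac{\kappa}{\alpha}(1-\bmu^\T\mathbf{x})^\alpha$, exactly as recorded in the Remark. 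Thus the whole integrand depends on $\mathbf{x}$ only through the scalar $\bmu^\T\mathbf{x}$.

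Next I would write $\E\|\mathbf{X}-\bmu\|^{2\beta}$ as the integral over $\Sp^{d-1}$ of $2^\beta(1-\bmu^\T\mathbf{x})^\beta\exp\bigl(-\tfrac{\kappa}{\alpha}(1-\bmu^\T\mathbf{x})^\alpha\bigr)$ against $f_{\pmb{\xi}}(\mathbf{x})\,\sigma(d\mathbf{x})$, and apply the projection formula \eqref{lmm1:eq2} with $\mathbf{a}=\bmu$. This collapses the surface integral to $c_{2,d}(\kappa,\alpha)\frac{2\pi^{(d-1)/2}}{\Gamma((d-1)/2)}\int_{-1}^{1} 2^\beta(1-y)^\beta\exp\bigl(-\tfrac{\kappa}{\alpha}(1-y)^\alpha\bigr)(1-y^2)^{(d-3)/2}\,dy$.

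The one genuinely new step is the substitution $u=1-y$. Under it $dy=-du$, the limits $y\in[-1,1]$ become $u\in[0,2]$, and crucially $1-y^2=(1-y)(1+y)=u(2-u)$, so that $(1-y^2)^{(d-3)/2}=u^{(d-3)/2}(2-u)^{(d-3)/2}$. Collecting the powers of $u$ turns the integral into $2^\beta\int_0^2 e^{-\frac{\kappa}{\alpha}u^\alpha}(2-u)^{(d-3)/2}u^{(d-3)/2+\beta}\,du = 2^\beta A_{2,d}(\kappa,\alpha,\beta)$, matching \eqref{def:A2} precisely.

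Finally, taking $\beta=0$ and using $\E\|\mathbf{X}-\bmu\|^0=1$ identifies the prefactor as $c_{2,d}(\kappa,\alpha)\frac{2\pi^{(d-1)/2}}{\Gamma((d-1)/2)}=A_{2,d}(\kappa,\alpha,0)^{-1}$; this also agrees with the closed form for $c_{2,d}$ recorded in the Remark after the same change of variables. Dividing then yields the claimed identity $\E\|\mathbf{X}-\bmu\|^{2\beta}=2^\beta A_{2,d}(\kappa,\alpha,\beta)/A_{2,d}(\kappa,\alpha,0)$. I do not anticipate any real obstacle here: the argument is a direct calculation parallel to Proposition~\ref{prop1}, and the only point requiring care is the bookkeeping of exponents, ensuring that the factor $u^{(d-3)/2}$ coming from the Jacobian combines correctly with $u^\beta$ to produce the $u^{(d-3)/2+\beta}$ in the definition of $A_{2,d}$.
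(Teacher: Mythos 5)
Your proposal is correct and follows essentially the same route as the paper's proof: expand $\|\mathbf{x}-\bmu\|^{2}=2(1-\bmu^\T\mathbf{x})$, reduce the surface integral to a one-dimensional integral via \eqref{lmm1:eq2}, and substitute $z=1-y$ to land on the $[0,2]$ integrals defining $A_{2,d}$. The only cosmetic difference is that you identify the prefactor through the $\beta=0$ normalization, while the paper writes the answer directly as a ratio of integrals; the content is identical.
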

\begin{proof}
Let $f$ be the density of the form \eqref{def:gmf2:eq}, then
\begin{align*}
    & \E \|\mathbf{X}-\bmu\|^{2\beta}=\int_{\Sp^{d-1}} \|\mathbf{x}-\bmu\|^{2\beta}f(\mathbf{x})\sigma(d\mathbf{x})=\int_{\Sp^{d-1}} (2-2\bmu^\T\mathbf{x})^{\beta}f(\mathbf{x})\sigma(d\mathbf{x})\\
    &\stackrel{\eqref{lmm1:eq2}}{=}c_{2,d}(\kappa,\alpha)\frac{2 \pi^{\frac{d-1}{2}}}{\Gamma\left(\frac{d-1}{2}\right)}\int_{-1}^{1} (2-2y)^\beta \exp\left(-\frac{\kappa}{\alpha} (1-y)^\alpha \right)(1-y^2)^{\frac{d-3}{2}}dy\\
        &=2^\beta\frac{\int_{-1}^{1}(1-y)^\beta e^{-\frac{\kappa}{\alpha}\left(1-y\right)^{\alpha}} (1-y^2)^{\frac{d-3}{2}}dy}{\int_{-1}^{1} e^{-\frac{\kappa}{\alpha}\left(1-y\right)^{\alpha}} (1-y^2)^{\frac{d-3}{2}}dy}\\
        &=2^\beta\frac{\int_{0}^{2} e^{-\frac{\kappa}{\alpha}z^{\alpha}} (2-z)^{\frac{d-3}{2}}z^{\frac{d-3}{2}+\beta}dz}{\int_{0}^{2} e^{-\frac{\kappa}{\alpha}z^{\alpha}} (2-z)^{\frac{d-3}{2}}z^{\frac{d-3}{2}}dz}. 
\end{align*}

\end{proof}
Particularly, the mean direction of $\mathbf{X}\sim \mathrm{GvMF}_{2,d}(\alpha,\kappa,\bmu)$ is $\bmu$ and its mean resultant length  equals
    \begin{equation}
  \label{GM2:m2}   
     \|\E[\mathbf{X}]\|=\E[\bmu^\T \mathbf{X}]=1-\frac{ A_{2,d}(\kappa,\alpha,1)}{ A_{2,d}(\kappa,\alpha,0)}.
    \end{equation}
\begin{proposition}
\label{prop3}
Let $\beta\geq 0$ and $\mathbf{X}\sim \mathrm{GvMF}_{3,d}(\alpha,\kappa,\bmu),$ then
\begin{equation}
    \E\left( |\bmu^\T\mathbf{X}|^\beta \right)=\frac{ A_{1,d}(\kappa,\alpha,\beta)}{ A_{1,d}(\kappa,\alpha,0)}.
\end{equation}
\end{proposition}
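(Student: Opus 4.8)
The plan is to follow exactly the template used for Propositions \ref{prop1} and \ref{prop2}. I would start by writing the moment as a surface integral against the density \eqref{def:gmf3:eq},
\[
\E\left( |\bmu^\T\mathbf{X}|^\beta \right)=c_{3,d}(\kappa,\alpha)\int_{\Sp^{d-1}} |\bmu^\T\mathbf{x}|^\beta \exp\left(\frac{\kappa}{\alpha}|\bmu^\T\mathbf{x}|^{\alpha}\right)\sigma(d\mathbf{x}),
\]
and then invoke Lemma \ref{lmm1} in the form \eqref{lmm1:eq2}, since $\bmu\in\Sp^{d-1}$, with the one-dimensional integrand $g(y)=|y|^\beta \exp\left(\tfrac{\kappa}{\alpha}|y|^{\alpha}\right)$. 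This reduces the problem to a single integral of $|y|^\beta \exp\left(\tfrac{\kappa}{\alpha}|y|^{\alpha}\right)(1-y^2)^{\frac{d-3}{2}}$ over $[-1,1]$, up to the factor $\tfrac{2\pi^{(d-1)/2}}{\Gamma((d-1)/2)}$.

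The key observation, and the only point where the axial case differs from Type I, is that this integrand is \emph{even} in $y$: both $|y|^\beta$ and $|y|^\alpha$ are unchanged under $y\mapsto -y$. Whereas in Proposition \ref{prop1} the presence of $y^{<\alpha>}=|y|^\alpha\sign(y)$ forced the exponential to split into a sum $e^{\kappa y^\alpha/\alpha}\pm e^{-\kappa y^\alpha/\alpha}$ over $[0,1]$, here the symmetry lets me simply fold the integral, so that $\int_{-1}^{1}=2\int_{0}^{1}$. By the definition \eqref{def:A1}, the resulting half-line integral is precisely $A_{1,d}(\kappa,\alpha,\beta)$, giving a numerator proportional to $2A_{1,d}(\kappa,\alpha,\beta)$.

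To finish, I would note that the same evenness is already baked into the normalizing constant: $c_{3,d}(\kappa,\alpha)$ carries a factor $4$ rather than $2$ precisely because the folded integral covers $[0,1]$, and its reciprocal equals $\tfrac{4\pi^{(d-1)/2}}{\Gamma((d-1)/2)}A_{1,d}(\kappa,\alpha,0)$. Substituting this in, the common prefactors $\tfrac{\pi^{(d-1)/2}}{\Gamma((d-1)/2)}$ and the factors of $4$ cancel between numerator and denominator, leaving the stated ratio $A_{1,d}(\kappa,\alpha,\beta)/A_{1,d}(\kappa,\alpha,0)$. I do not anticipate any genuine obstacle: the argument is a routine application of Lemma \ref{lmm1} together with the evenness of the absolute-value integrand, and the cleanest presentation is just to track how the factor-of-two from symmetry matches the factor-of-two difference between $c_{1,d}$ and $c_{3,d}$.
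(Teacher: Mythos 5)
Your proposal is correct and follows essentially the same route as the paper's proof: apply \eqref{lmm1:eq2} to reduce to a one-dimensional integral over $[-1,1]$, fold it onto $[0,1]$ by evenness of $|y|^\beta\exp(\kappa|y|^\alpha/\alpha)(1-y^2)^{(d-3)/2}$, and let the factor of $4$ in $c_{3,d}(\kappa,\alpha)$ absorb the constants. The bookkeeping of the factor of two against the difference between $c_{1,d}$ and $c_{3,d}$ is exactly the point, and you have it right.
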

\begin{proof} 
Let $f$ be the density of the form \eqref{def:gmf3:eq}, then
\begin{align*}
    &\E\left( |\bmu^\T\mathbf{X}|^\beta \right)=\int_{\Sp^{d-1}} |\bmu^\T\mathbf{x}|^\beta f(\mathbf{x})\sigma(d\mathbf{x})\\
    &\stackrel{\eqref{lmm1:eq2}}{=}c_{3,d}(\kappa,\alpha)\frac{2 \pi^{\frac{d-1}{2}}}{\Gamma\left(\frac{d-1}{2}\right)}\int_{-1}^{1} |y|^\beta  \exp\left(\frac{\kappa}{\alpha} |y|^\alpha \right)(1-y^2)^{\frac{d-3}{2}}dy\\
    &=\frac{\int_{0}^{1}   e^{\frac{\kappa}{\alpha}y^\alpha}y^\beta(1-y^2)^{\frac{d-3}{2}}dy}
    {\int_{0}^{1}   e^{\frac{\kappa}{\alpha}y^\alpha}(1-y^2)^{\frac{d-3}{2}}dy}.
\end{align*}
\end{proof}
Note that, the mean direction of $\mathbf{X}\sim \mathrm{GvMF}_{3,d}(\alpha,\kappa,\bmu)$ is not defined and its mean resultant length $\|\E[\mathbf{X}]\|=\E[\bmu^\T \mathbf{X}]$ equals 0.

\section{Entropy}
In this section we find the entropy of generalized von Mises--Fisher distributions, and show the maximum entropy principle for them.
Then we discuss the statistical estimation of an entropy and prove the $L^2$ convergence of the $k$th nearest neigbour estimator for random variables on compact manifolds.

\subsection{Maximum entropy principle for generalized von-Mises Fisher distributions}

Recall that the entropy of a continuous random vector $\mathbf{X}\in \Sp^{d-1}$ with a density $f$ is
\begin{equation}
\label{Entropy:def1}
    H(\mathbf{X})=-\int_{\Sp^{d-1}} (\log f(\mathbf{x})) f(\mathbf{x}) \sigma(d\mathbf{x}).
\end{equation}
For a density version $f$ we denote its support by $\mathrm{supp} f=\{\mathbf{x}\in \Sp^{d-1}: f(\mathbf{x})>0\}.$ Clearly,  the integral in \eqref{Entropy:def1} is taken over $\mathrm{supp} f.$
\begin{theorem}
\label{thm:entropy1}
Let $\mathbf{X}_j\sim \mathrm{GvMF}_{j,d}(\alpha,\kappa,\bmu),$ $j=1,2,3,$  then 
\begin{align}
\label{HX1}
    H(\mathbf{X}_1)&=-\log c_{1,d}(\kappa,\alpha)- \frac{\kappa}{\alpha}\E\left( (\bmu^\T\mathbf{X}_1)^{<\alpha>}\right),\\
\label{HX2}    H(\mathbf{X}_2)&=-\log c_{2,d}(\kappa,\alpha) +\frac{\kappa}{2^{\alpha}\alpha}\E \|\mathbf{X}_2-\bmu\|^{2\alpha}\\
\label{HX3}    H(\mathbf{X}_3)&=-\log c_{3,d}(\kappa,\alpha) -\frac{\kappa}{\alpha}\E |\bmu^\T\mathbf{X}_3|^{\alpha}.
\end{align}
\end{theorem}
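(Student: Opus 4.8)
The plan is to substitute the explicit density formulas of Definitions \ref{def:gmf1}, \ref{def:gmf2}, and the axial-type definition directly into the entropy definition \eqref{Entropy:def1}, and then exploit the fact that each density lies in an exponential family. In every case the density has the canonical form $f_{\mathbf{X}_j}(\mathbf{x}) = c_{j,d}(\kappa,\alpha)\exp(g_j(\mathbf{x}))$, where $g_1(\mathbf{x}) = \tfrac{\kappa}{\alpha}(\bmu^\T\mathbf{x})^{<\alpha>}$, $g_2(\mathbf{x}) = -\tfrac{\kappa}{2^\alpha\alpha}\|\mathbf{x}-\bmu\|^{2\alpha}$, and $g_3(\mathbf{x}) = \tfrac{\kappa}{\alpha}|\bmu^\T\mathbf{x}|^{\alpha}$. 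Taking logarithms turns the product into a sum, $\log f_{\mathbf{X}_j}(\mathbf{x}) = \log c_{j,d}(\kappa,\alpha) + g_j(\mathbf{x})$, which is exactly the feature that makes the computation routine.

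First I would write, for a fixed $j$,
\begin{align*}
H(\mathbf{X}_j)
&= -\int_{\Sp^{d-1}} \bigl(\log c_{j,d}(\kappa,\alpha) + g_j(\mathbf{x})\bigr) f_{\mathbf{X}_j}(\mathbf{x})\,\sigma(d\mathbf{x}) \\
&= -\log c_{j,d}(\kappa,\alpha)\int_{\Sp^{d-1}} f_{\mathbf{X}_j}(\mathbf{x})\,\sigma(d\mathbf{x})
   - \int_{\Sp^{d-1}} g_j(\mathbf{x})\, f_{\mathbf{X}_j}(\mathbf{x})\,\sigma(d\mathbf{x}).
\end{align*}
Then I would invoke two elementary facts: the normalization $\int_{\Sp^{d-1}} f_{\mathbf{X}_j}\,\sigma(d\mathbf{x}) = 1$, which kills the integral in the first term and leaves $-\log c_{j,d}(\kappa,\alpha)$, and the definition of expectation, which identifies the second integral as $\E\,g_j(\mathbf{X}_j)$. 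This yields the unified identity $H(\mathbf{X}_j) = -\log c_{j,d}(\kappa,\alpha) - \E\,g_j(\mathbf{X}_j)$.

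Finally I would substitute the three explicit choices of $g_j$ to obtain \eqref{HX1}--\eqref{HX3}; in particular the sign in \eqref{HX2} arises because $g_2$ itself carries a minus sign, so that $-\E\,g_2(\mathbf{X}_2) = +\tfrac{\kappa}{2^\alpha\alpha}\E\|\mathbf{X}_2-\bmu\|^{2\alpha}$. I do not expect any genuine obstacle here, since the argument is a one-line manipulation valid for any exponential-family density; the only point requiring a word of justification is the finiteness of the expectations $\E\,g_j(\mathbf{X}_j)$, but this is immediate because $\bmu^\T\mathbf{x}$ and $\|\mathbf{x}-\bmu\|$ are bounded on the compact sphere $\Sp^{d-1}$, so each $g_j$ is a bounded function and the integrals converge absolutely. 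The closed-form expressions for these expectations have moreover already been computed in Propositions \ref{prop1}, \ref{prop2}, and \ref{prop3}, so the right-hand sides of \eqref{HX1}--\eqref{HX3} are fully explicit.
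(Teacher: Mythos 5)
Your proposal is correct and follows essentially the same route as the paper's proof: split $\log f_{\mathbf{X}_j}$ into $\log c_{j,d}(\kappa,\alpha)+g_j(\mathbf{x})$, use the normalization of the density for the first term, and recognize the second integral as $\E\,g_j(\mathbf{X}_j)$. Your added remark on the finiteness of the expectations (boundedness of $g_j$ on the compact sphere) is a harmless extra justification the paper leaves implicit.
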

\begin{proof}
Let $\mathbf{X}_1$ have density $f_1,$ then the entropy of $\mathbf{X}_1$ equals
\begin{align*}
    &-\int_{\Sp^{d-1}} (\log f_1(\mathbf{x})) f_1(\mathbf{x})\sigma(d\mathbf{x})=-\log c_{1,d}(\kappa,\alpha)\int_{\Sp^{d-1}}f_1(\mathbf{x})\sigma(d\mathbf{x}) \\
    &- \frac{\kappa}{\alpha}\int_{\Sp^{d-1}}  (\bmu^\T\mathbf{x})^{<\alpha>} f_1(\mathbf{x})\sigma(d\mathbf{x})=-\log c_{1,d}(\kappa,\alpha)- \frac{\kappa}{\alpha}\E\left( (\bmu^\T\mathbf{X}_1)^{<\alpha>}\right).
\end{align*}
For $\mathbf{X}_2$ with density $f_2$ we have
\begin{align*}
    &H(\mathbf{X}_2)=-\int_{\Sp^{d-1}} (\log f_2(\mathbf{x})) f_2(\mathbf{x})\sigma(d\mathbf{x})=-\log c_{2,d}(\kappa,\alpha)\int_{\Sp^{d-1}}f_2(\mathbf{x})\sigma(d\mathbf{x}) \\
    &+\frac{\kappa}{2^{\alpha}\alpha}\int_{\Sp^{d-1}} \|\mathbf{x}-\bmu\|^{2\alpha}f(\mathbf{x})\sigma(d\mathbf{x})=
  -\log c_{2,d}(\kappa,\alpha) +\frac{\kappa}{2^{\alpha}\alpha}\E \|\mathbf{X}_2-\bmu\|^{2\alpha}.
\end{align*}
The case of $\mathbf{X}_3$ is similar.
\end{proof}

\begin{theorem}
\label{thm1}
Let a unit  random vector $\mathbf{Z}\in \Sp^{d-1}$ have a generalized von Mises-Fisher distribution $\mathrm{GvMF}_{1,d}(\alpha,\kappa,\bmu)$. Then $\mathbf{Z}$ has the maximum entropy value over all continuous random variables $\mathbf{X}$ on $\Sp^{d-1}$ with 
\begin{equation}
\label{thm1:eq1}
    \E\left( (\bmu^\T\mathbf{X})^{<\alpha>}\right)=\E\left( (\bmu^\T\mathbf{Z})^{<\alpha>}\right).
\end{equation}
\end{theorem}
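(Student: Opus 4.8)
The plan is to use the standard maximum-entropy argument via a Gibbs-type inequality (equivalently, nonnegativity of relative entropy), exploiting the fact that the $\mathrm{GvMF}_{1,d}$ density has the exponential form $f_{\mathbf{Z}}(\mathbf{x}) = c_{1,d}(\kappa,\alpha)\exp\left(\frac{\kappa}{\alpha}(\bmu^\T\mathbf{x})^{<\alpha>}\right)$. Let $f_{\mathbf{X}}$ be the density of an arbitrary competing random vector $\mathbf{X}$ on $\Sp^{d-1}$ satisfying the moment constraint \eqref{thm1:eq1}. The key observation is that $\log f_{\mathbf{Z}}(\mathbf{x}) = \log c_{1,d}(\kappa,\alpha) + \frac{\kappa}{\alpha}(\bmu^\T\mathbf{x})^{<\alpha>}$ is an affine function of the statistic $(\bmu^\T\mathbf{x})^{<\alpha>}$, so its expectation under either $f_{\mathbf{X}}$ or $f_{\mathbf{Z}}$ depends only on $\E\left((\bmu^\T\mathbf{X})^{<\alpha>}\right)$, which the constraint forces to agree.

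\textbf{The main steps.} First I would write the entropy difference and insert $\log f_{\mathbf{Z}}$ as a pivot:
\begin{align*}
H(\mathbf{Z}) - H(\mathbf{X}) &= -\int_{\Sp^{d-1}} f_{\mathbf{Z}}\log f_{\mathbf{Z}}\,\sigma(d\mathbf{x}) + \int_{\Sp^{d-1}} f_{\mathbf{X}}\log f_{\mathbf{X}}\,\sigma(d\mathbf{x}).
\end{align*}
Next I would add and subtract $\int f_{\mathbf{X}}\log f_{\mathbf{Z}}\,\sigma(d\mathbf{x})$, grouping the terms so that
\begin{equation*}
H(\mathbf{Z}) - H(\mathbf{X}) = \int_{\Sp^{d-1}} f_{\mathbf{X}}\log\frac{f_{\mathbf{X}}}{f_{\mathbf{Z}}}\,\sigma(d\mathbf{x}) + \left(\int_{\Sp^{d-1}} f_{\mathbf{X}}\log f_{\mathbf{Z}}\,\sigma(d\mathbf{x}) - \int_{\Sp^{d-1}} f_{\mathbf{Z}}\log f_{\mathbf{Z}}\,\sigma(d\mathbf{x})\right).
\end{equation*}
The first term is the Kullback--Leibler divergence $D(f_{\mathbf{X}}\,\|\,f_{\mathbf{Z}}) \ge 0$. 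For the second term I would substitute the explicit form of $\log f_{\mathbf{Z}}$; the constant $\log c_{1,d}(\kappa,\alpha)$ integrates to the same value against both densities (each integrates to $1$), and the linear piece gives $\frac{\kappa}{\alpha}\left(\E\left((\bmu^\T\mathbf{X})^{<\alpha>}\right) - \E\left((\bmu^\T\mathbf{Z})^{<\alpha>}\right)\right)$, which vanishes by hypothesis \eqref{thm1:eq1}. Hence $H(\mathbf{Z}) - H(\mathbf{X}) = D(f_{\mathbf{X}}\,\|\,f_{\mathbf{Z}}) \ge 0$, proving the claim.

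\textbf{The main obstacle} I anticipate is not the algebra but the analytic care needed to justify that the integrals are well-defined and finite for the comparison class. Specifically, one should confirm that $\mathbf{X}$ has a density absolutely continuous with respect to $\sigma$ (so that $D(f_{\mathbf{X}}\,\|\,f_{\mathbf{Z}})$ makes sense), and that $\int f_{\mathbf{X}}\log f_{\mathbf{Z}}\,\sigma(d\mathbf{x})$ is finite. The latter is benign here: since $\Sp^{d-1}$ is compact and $(\bmu^\T\mathbf{x})^{<\alpha>}$ is bounded on it, $\log f_{\mathbf{Z}}$ is bounded, so the pivot integral is automatically finite and the nonnegativity of the KL divergence is the only nontrivial input. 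I would also note that equality holds if and only if $f_{\mathbf{X}} = f_{\mathbf{Z}}$ $\sigma$-almost everywhere, giving uniqueness of the maximizer, though this may be stated separately if the theorem only asserts the maximum is attained by $\mathbf{Z}$.
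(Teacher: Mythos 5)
Your argument is correct and is essentially the paper's own proof: the paper establishes $\int f\log(f^*/f)\,\sigma(d\mathbf{x})\le 0$ via Jensen's inequality (which is exactly the nonnegativity of the Kullback--Leibler divergence you invoke) and then uses the affine form of $\log f^*$ together with the moment constraint to identify the upper bound with $H(\mathbf{Z})$. Your added remarks on finiteness of the pivot integral and the equality case are consistent with, and slightly more careful than, the paper's presentation, but the route is the same.
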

\begin{proof}
Let $\mathbf{X}$ be a random unit vector on $\Sp^{d-1},d\geq 2$ such that \eqref{thm1:eq1} holds true. 
Let $f$ and $f^*$ be the densities of $\mathbf{X}$ and $\mathbf{Z}$ respectively. By Jensen's inequality, 
\begin{align*}
    &\int_{\Sp^{d-1}}f(\mathbf{x})\log f^*(\mathbf{x}) \sigma(d\mathbf{x})-\int_{\Sp^{d-1}}f(\mathbf{x})\log f(\mathbf{x}) \sigma(d\mathbf{x})=\int_{\Sp^{d-1}}f(\mathbf{x})\log \frac{f^*(\mathbf{x})}{f(\mathbf{x})} \sigma(d\mathbf{x})\\
    &\leq \log\left(\int_{\Sp^{d-1}}f(\mathbf{x}) \frac{f^*(\mathbf{x})}{f(\mathbf{x})} \sigma(d\mathbf{x})\right)=0
\end{align*}
with equality if and only if $f=f^*$ almost everywhere with respect to the Lebesgue measure on $\Sp^{d-1}.$ So,
\begin{equation}H(\mathbf{X})=-\int_{\Sp^{d-1}}f(\mathbf{x})\log f(\mathbf{x}) \sigma(d\mathbf{x})\leq -\int_{\Sp^{d-1}}f(\mathbf{x})\log f^*(\mathbf{x}) \sigma(d\mathbf{x}).
\end{equation}
In this case $f^*(\mathbf{x}) = \log c_{1,d}(\kappa,\alpha) + \frac{\kappa}{\alpha} (\bmu^\T\mathbf{x})^{<\alpha>}, \mathbf{x}\in \Sp^{d-1}$ and hence
\begin{align*}H(\mathbf{X})&\leq  -\int_{\Sp^{d-1}}f(\mathbf{x})\log f^*(\mathbf{x})\sigma(d \mathbf{x})=
  -\log c_{1,d}(\kappa,\alpha) -\frac{\kappa}{\alpha}\int_{\Sp^{d-1}} (\bmu^\T\mathbf{x})^{<\alpha >}f(\mathbf{x})\sigma(d\mathbf{x})\\
  &=
  -\log c_{1,d}(\kappa,\alpha) -\frac{\kappa}{\alpha}\E\left[ (\bmu^\T\mathbf{X})^{<\alpha>}\right]=-\log c_{1,d}(\kappa,\alpha) -\frac{\kappa}{\alpha}\E\left[ (\bmu^\T\mathbf{Z})^{<\alpha>}\right]\\
  &=-\log c_{1,d}(\kappa,\alpha) -\frac{\kappa}{\alpha}\int_{\Sp^{d-1}} (\bmu^\T\mathbf{x})^{<\alpha >}f^*(\mathbf{x})\sigma(d\mathbf{x})\\
  &=-\int_{\Sp^{d-1}}f^*(\mathbf{x})\log f^*(\mathbf{x})\sigma(d\mathbf{x})=H(\mathbf{Z}).
\end{align*}
\end{proof}
The maximum entropy principle for generalized von Mises-Fisher distribution of type II has the following form.
\begin{theorem}
\label{thm2}
Let a unit random vector $\mathbf{Z}\in \Sp^{d-1}$ have a generalized von Mises-Fisher  distribution $\mathrm{GvMF}_{2,d}(\alpha,\kappa,\bmu)$. Then $\mathbf{Z}$ has the maximum entropy value over all continuous random variables $\mathbf{X}$ on $\Sp^{d-1}$ with 
\begin{equation}
\label{thm2:eq1}
    \E \|\mathbf{X}-\bmu\|^{2\alpha} =\E\|\mathbf{Z}-\bmu\|^{2\alpha}.
\end{equation}
\end{theorem}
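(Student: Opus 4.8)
The plan is to mirror the argument of Theorem \ref{thm1} verbatim, replacing the sufficient statistic $(\bmu^\T\mathbf{x})^{<\alpha>}$ by $\|\mathbf{x}-\bmu\|^{2\alpha}$ and invoking the moment constraint \eqref{thm2:eq1} in place of \eqref{thm1:eq1}. Let $f$ and $f^*$ denote the densities of $\mathbf{X}$ and $\mathbf{Z}$ respectively, where $f^*$ is the $\mathrm{GvMF}_{2,d}(\alpha,\kappa,\bmu)$ density \eqref{def:gmf2:eq}. The engine of the proof is the nonnegativity of the relative entropy: by Jensen's inequality applied to the concave logarithm,
\begin{equation*}
\int_{\Sp^{d-1}} f(\mathbf{x}) \log\frac{f^*(\mathbf{x})}{f(\mathbf{x})}\,\sigma(d\mathbf{x}) \leq \log\left(\int_{\Sp^{d-1}} f^*(\mathbf{x})\,\sigma(d\mathbf{x})\right) = 0,
\end{equation*}
with equality if and only if $f=f^*$ almost everywhere. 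Rearranging yields the cross-entropy bound $H(\mathbf{X}) \leq -\int_{\Sp^{d-1}} f\log f^*\,\sigma(d\mathbf{x})$.

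The second step is to evaluate this cross-entropy explicitly. Taking the logarithm of \eqref{def:gmf2:eq} gives $\log f^*(\mathbf{x}) = \log c_{2,d}(\kappa,\alpha) - \frac{\kappa}{2^{\alpha}\alpha}\|\mathbf{x}-\bmu\|^{2\alpha}$, so that
\begin{equation*}
-\int_{\Sp^{d-1}} f\log f^*\,\sigma(d\mathbf{x}) = -\log c_{2,d}(\kappa,\alpha) + \frac{\kappa}{2^{\alpha}\alpha}\,\E\|\mathbf{X}-\bmu\|^{2\alpha}.
\end{equation*}
Here is the only place the hypothesis enters: by the constraint \eqref{thm2:eq1} I replace $\E\|\mathbf{X}-\bmu\|^{2\alpha}$ by $\E\|\mathbf{Z}-\bmu\|^{2\alpha}$, which makes the right-hand side depend only on the law of $\mathbf{Z}$. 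Comparing the result with the entropy formula \eqref{HX2} of Theorem \ref{thm:entropy1}, I recognize it as exactly $H(\mathbf{Z})$, and therefore conclude $H(\mathbf{X}) \leq H(\mathbf{Z})$.

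I expect no serious obstacle, since $\mathrm{GvMF}_{2,d}$ lies in the exponential family with $\|\mathbf{x}-\bmu\|^{2\alpha}$ as its sufficient statistic and the argument is just the standard Gibbs variational characterization. The points demanding care are purely in the bookkeeping: the exponent in \eqref{def:gmf2:eq} carries a minus sign, so the corresponding term in the cross-entropy appears with a plus sign, matching the $+\tfrac{\kappa}{2^{\alpha}\alpha}$ of \eqref{HX2} rather than the minus sign seen in the Type I case. I would also remark that the admissible class is nonempty (it contains $\mathbf{Z}$) and that all integrals are finite, since $\|\mathbf{x}-\bmu\|^{2\alpha}$ is bounded on the compact sphere $\Sp^{d-1}$; hence the cross-entropy is well defined whenever $H(\mathbf{X})$ exists, and the inequality is meaningful.
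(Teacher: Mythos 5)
Your proposal is correct and follows essentially the same route as the paper: the Gibbs/Jensen inequality for the relative entropy, explicit evaluation of the cross-entropy using $\log f^*(\mathbf{x}) = \log c_{2,d}(\kappa,\alpha) - \frac{\kappa}{2^{\alpha}\alpha}\|\mathbf{x}-\bmu\|^{2\alpha}$, substitution via the moment constraint \eqref{thm2:eq1}, and identification of the result with $H(\mathbf{Z})$ through \eqref{HX2}. Your remarks on the sign bookkeeping and on finiteness of the integrals are accurate (and you even silently fix the paper's typo of writing $f^*$ where $\log f^*$ is meant), so nothing further is needed.
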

\begin{proof}
Let $f$ and $f^*$ be the densities of $\mathbf{X}$ and $\mathbf{Z}$ respectively. The proof is similar to Theorem \ref{thm1}. Indeed, $f^*(\mathbf{x}) = \log c_{2,d}(\kappa,\alpha) - \frac{\kappa}{2^{\alpha}\alpha} \|\mathbf{x}-\bmu\|^{2\alpha}, \mathbf{x}\in \Sp^{d-1}$ and hence
 \begin{align*}&H(\mathbf{X})\leq  -\int_{\Sp^{d-1}}f(\mathbf{x})\log f^*(\mathbf{x})\sigma(d \mathbf{x})=
  -\log c_{2,d}(\kappa,\alpha) +\frac{\kappa}{2^{\alpha}\alpha}\int_{\Sp^{d-1}} \|\mathbf{x}-\bmu\|^{2\alpha}f(\mathbf{x})\sigma(d\mathbf{x})\\
  &=
  -\log c_{2,d}(\kappa,\alpha) +\frac{\kappa}{2^{\alpha}\alpha}\E \|\mathbf{X}-\bmu\|^{2\alpha}=-\log c_{2,d}(\kappa,\alpha) +\frac{\kappa}{2^{\alpha}\alpha}\E \|\mathbf{Z}-\bmu\|^{2\alpha}\\
  &=-\log c_{2,d}(\kappa,\alpha) +\frac{\kappa}{2^{\alpha}\alpha}\int_{\Sp^{d-1}} \|\mathbf{x}-\bmu\|^{2\alpha}f^*(\mathbf{x})\sigma(d\mathbf{x})=-\int_{\Sp^{d-1}}f^*(\mathbf{x})\log f^*(\mathbf{x})\sigma(d\mathbf{x})=H(\mathbf{Z}).
\end{align*}
\end{proof}

\begin{theorem}
\label{thm3}
Let a unit random vector $\mathbf{Z}\in \Sp^{d-1}$ have an axial generalized von Mises-Fisher  distribution $\mathrm{GvMF}_{3,d}(\alpha,\kappa,\bmu)$. Then $\mathbf{Z}$ has the maximum entropy value over all  continuous random variables $\mathbf{X}$ on $\Sp^{d-1}$ with 
\begin{equation}
\label{thm3:eq1}
    \E |\bmu^\T\mathbf{X}|^{\alpha} =\E|\bmu^\T\mathbf{Z}|^{\alpha}.
\end{equation}
\end{theorem}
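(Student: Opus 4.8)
The plan is to reproduce the Gibbs--variational template used in the proofs of Theorems \ref{thm1} and \ref{thm2} essentially verbatim, since the axial case differs only in the form of the sufficient statistic appearing in the exponent. Let $f$ and $f^*$ denote the densities of $\mathbf{X}$ and $\mathbf{Z}$ respectively, where $\mathbf{X}$ is an arbitrary continuous unit random vector on $\Sp^{d-1}$ satisfying the moment constraint \eqref{thm3:eq1}.

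First I would invoke the same consequence of Jensen's inequality as in Theorem \ref{thm1}: because $\log$ is concave,
\[
\int_{\Sp^{d-1}} f(\mathbf{x}) \log\frac{f^*(\mathbf{x})}{f(\mathbf{x})}\,\sigma(d\mathbf{x}) \leq \log\left(\int_{\Sp^{d-1}} f^*(\mathbf{x})\,\sigma(d\mathbf{x})\right) = 0,
\]
with equality precisely when $f = f^*$ almost everywhere. Rearranging gives the bound $H(\mathbf{X}) \leq -\int_{\Sp^{d-1}} f(\mathbf{x}) \log f^*(\mathbf{x})\,\sigma(d\mathbf{x})$ on the left-hand entropy by the cross-entropy.

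Next I would substitute the explicit log-density coming from \eqref{def:gmf3:eq}, namely $\log f^*(\mathbf{x}) = \log c_{3,d}(\kappa,\alpha) + \frac{\kappa}{\alpha}|\bmu^\T\mathbf{x}|^{\alpha}$, so that the cross-entropy becomes $-\log c_{3,d}(\kappa,\alpha) - \frac{\kappa}{\alpha}\E|\bmu^\T\mathbf{X}|^{\alpha}$. The decisive step is then to apply the constraint \eqref{thm3:eq1}, which replaces $\E|\bmu^\T\mathbf{X}|^{\alpha}$ by $\E|\bmu^\T\mathbf{Z}|^{\alpha}$; since the cross-entropy depends on $f$ only through this single moment, it collapses to $-\int_{\Sp^{d-1}} f^* \log f^*\,\sigma = H(\mathbf{Z})$, the self-entropy computed in \eqref{HX3}. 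Chaining these identities yields $H(\mathbf{X}) \leq H(\mathbf{Z})$, which is the claim.

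I do not expect a genuine obstacle, as this is the third instance of the same argument; the only feature peculiar to the axial type is that $f^*$ is even in $\bmu^\T\mathbf{x}$, which is harmless for the computation (it is exactly what makes the sufficient statistic $|\bmu^\T\mathbf{x}|^{\alpha}$ rather than $(\bmu^\T\mathbf{x})^{<\alpha>}$). The one point worth verifying is the finiteness needed to legitimize splitting $\int f\log(f^*/f)$ and rearranging: this holds because $\mathrm{GvMF}_{3,d}$ has a bounded, strictly positive density on the compact sphere, so $\log f^*$ is bounded and $\E|\bmu^\T\mathbf{X}|^{\alpha}\leq 1$ is finite for every admissible $\mathbf{X}$.
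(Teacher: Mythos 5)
Your proposal is correct and follows exactly the route the paper takes: the paper's proof of Theorem \ref{thm3} is the same Gibbs/Jensen cross-entropy argument already used for Theorems \ref{thm1} and \ref{thm2}, with $\log f^*(\mathbf{x}) = \log c_{3,d}(\kappa,\alpha) + \frac{\kappa}{\alpha}|\bmu^\T\mathbf{x}|^{\alpha}$ substituted and the moment constraint \eqref{thm3:eq1} used to identify the cross-entropy with $H(\mathbf{Z})$. Your added remark on the finiteness and boundedness of $\log f^*$ is a harmless (and correct) extra justification that the paper leaves implicit.
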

\begin{proof}
 The proof is similar to Theorems \ref{thm1} and \ref{thm2}. In this case, $f^*(\mathbf{x}) = \log c_{3,d}(\kappa,\alpha) + \frac{\kappa}{\alpha} |\bmu^\T\mathbf{x}|^{\alpha}, \mathbf{x}\in \Sp^{d-1}$ and 
 \begin{align*}&H(\mathbf{X})\leq  
  -\log c_{3,d}(\kappa,\alpha) -\frac{\kappa}{\alpha}\int_{\Sp^{d-1}} |\bmu^\T\mathbf{x}|^{\alpha}f(\mathbf{x})\sigma(d\mathbf{x})\\
  &=
  -\log c_{3,d}(\kappa,\alpha) -\frac{\kappa}{\alpha}\E |\bmu^\T\mathbf{X}|^{\alpha}=-\log c_{3,d}(\kappa,\alpha) -\frac{\kappa}{\alpha}\E |\bmu^\T\mathbf{Z}|^{\alpha}=H(\mathbf{Z}).
\end{align*}
\end{proof}

\subsection{Entropy estimation}
In this section we give the method of an entropy estimation for unit random vectors.  Actually, we extend the phase-state to the arbitrary compact Riemannian manifold.

Let $m,d\in \N,$ $m\leq d$, and $\mathcal{M}$ be a $m$-dimensional $C^1$ manifold embedded in $\R^d$ with the atlas $((U_i,g_i), i\in I_0),$ i.e., for each $y\in \mathcal{M}$ there exists an open subset $U_i$ of $\R^m$ and a continuously differentiable injection $g_i:U_i\to\R^d,$ such that $ y \in g_i(U) \subset \mathcal{M},$ and 
$g_i$ is an open map from $U_i$ to $\mathcal{M},$ and the linear map $g_i'(u)$ has full rank
for all $u \in U_i$.  

For bounded measurable $h:\mathcal{M}\to \R,$ the
integral $\int_{\mathcal{M}}h(y)\nu(dy)$ is defined by
$$\int_{\mathcal{M}}h(y) \nu(dy) =\sum_{i\in I_0}\int_{U_i}h(g_i(x))\psi_i(g_i(x))\sqrt{\det(J_{g_i}(x))'(J_{g_i}(x))}dx,$$
where $\nu$ is a $\sigma$-finite measure on $\mathcal{M},$ $J_{g_i}$ is the Jacobian of $g_i$ and $\{\psi_i,i\in I_0\}$ is the partition of unity, see \citep[pp. 3--4]{PY3_2013} and \citep[Chapter 2]{Berger} for more detailed setting.

Let $f:\mathcal{M} \to \R_+$ be a probability density of independent random elements $X,X_i,i\in \N$ with values in $\mathcal{M},$ i.e., $\int_{\mathcal{M}}f(x)\nu(dx)=1.$  Denote by $\mathcal{X}_N=\{X_1,\ldots,X_N\},$ $N\geq k$ the samples of the first $N$ elements. The entropy of $X$ equals $H(X)=-\int_{\mathcal{M}}\log(f(x))f(x)\nu(dx).$ Let $F$ be a finite subset of $\{X_i,i\geq k\}$ and $\rho_k(x,F)$ be the Euclidean distance between $x$ and its $k$th nearest neighbour in $F\setminus \{x\}$. Let $\gamma\approx 0.5772$ be the Euler–Mascheroni constant.

\begin{definition}
The $k-$nearest neighbour estimator ($k-$NNE) of the entropy $H(X)$ is defined by 
\begin{equation}\label{eq_of_H_N}
\widehat{H}_{N,k}(\mathcal{X}_N) = \frac{1}{N}\sum_{i=1}^{N}\log \left(\rho_k^m(X_i,\mathcal{X}_N) V_m  (N-1){e}^{-\psi(k)}\right),
\end{equation}
where  $$\psi(k)= \sum_{j=1}^{k-1}\frac{1}{j}-\gamma, \quad V_m=\frac{\pi^{m/2}}{\Gamma(1+m/2)}.$$
\end{definition}
For example, if $k=1$, then 
the nearest neighbour  distances estimator (NNE) is 
\begin{equation}\label{eq64}
\widehat{H}_{N,1}(\mathcal{X}_N) = \frac{m}{N}\sum_{i=1}^N \log \rho_{N,1,i} + \log V_m + \gamma + \log (N-1).
\end{equation}
 We start the proof of $L^2$-consistency of $\widehat{H}_{N,k}$ by writing 
down the particular case of Theorem 3.1 from \citep{PY3_2013} for the functional \begin{equation}
\xi(x,\mathcal{X}):=\log \left(e^{-\psi(k)}V_m \rho_k^m(x,\mathcal{X})\right).
\end{equation}
\begin{theorem}
\label{thmE1}
Let $k\geq 1$ put $q=1$ or $q=2.$ Suppose there exists $p\geq q$ such that 
\begin{equation}
\label{integr1}
\sup_{N\geq k} \E \left|\xi\left(N^{\frac{1}{m}} X_1,N^{\frac{1}{m}} \mathcal{X}_N\right)\right|^p<\infty.
\end{equation} Then as $N\to \infty$ we have $L^q$ convergence
\begin{equation}
    \frac{1}{N}\sum_{x\in \mathcal{X}_N}\xi\left(N^{\frac{1}{m}} x,N^{\frac{1}{m}} \mathcal{X}_N\right) \to \int_{\mathcal{M}} \E[ \xi\left(\mathbf{0},\mathcal{P}_{f(x)}\right)]f(x)\nu(dx),
\end{equation}
where $\mathcal{P}_{\lambda}$ denotes a homogeneous Poisson point process of intensity $\lambda>0$ in $\R^m$ (embedded in $\R^d$).
\end{theorem}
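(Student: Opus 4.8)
The plan is to obtain Theorem~\ref{thmE1} as a direct specialization of the general manifold limit theorem (Theorem~3.1 of \citep{PY3_2013}) applied to the score functional $\xi(x,\mathcal{X})=\log\left(e^{-\psi(k)}V_m\rho_k^m(x,\mathcal{X})\right)$. That theorem governs the normalized sums $\frac{1}{N}\sum_{x\in\mathcal{X}_N}\xi(N^{1/m}x,N^{1/m}\mathcal{X}_N)$ of scores over a binomial process on $\mathcal{M}$, and delivers the $L^q$ limit $\int_{\mathcal{M}}\E[\xi(\mathbf{0},\mathcal{P}_{f(x)})]f(x)\nu(dx)$ provided the functional is (i) translation invariant, (ii) stabilizing, and (iii) satisfies a uniform $p$th moment bound for some $p\geq q$. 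Since hypothesis (iii) is exactly our standing assumption \eqref{integr1}, the task reduces to checking (i) and (ii) for this particular $\xi$ and then invoking Theorem~3.1.

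Translation invariance is immediate, because Euclidean $k$th nearest neighbour distances satisfy $\rho_k(x+y,\mathcal{X}+y)=\rho_k(x,\mathcal{X})$ for every shift $y$, whence $\xi(x+y,\mathcal{X}+y)=\xi(x,\mathcal{X})$; the constant prefactor $e^{-\psi(k)}V_m$ plays no role. For stabilization I would observe that $\xi(\mathbf{0},\mathcal{X})$ depends on $\mathcal{X}$ only through the points inside the closed ball $B(\mathbf{0},\rho_k(\mathbf{0},\mathcal{X}))$: inserting or deleting points outside this ball leaves the $k$ nearest neighbours of the origin, and hence $\rho_k$, unchanged. Taking $\mathcal{X}=\mathcal{P}_\lambda$, one may therefore take the radius of stabilization equal to $\rho_k(\mathbf{0},\mathcal{P}_\lambda)$, which is almost surely finite since a homogeneous Poisson process of intensity $\lambda>0$ places at least $k$ points in some bounded ball about the origin almost surely. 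This yields stabilization in the strong form required, and together with translation invariance it identifies the local limiting score as $\E[\xi(\mathbf{0},\mathcal{P}_{f(x)})]$.

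With (i)--(iii) verified, the conclusion follows from Theorem~3.1 of \citep{PY3_2013}: convergence in probability of the normalized sum comes from stabilization combined with the fact that, under the $N^{1/m}$ rescaling, the empirical process near a base point flattens onto the tangent space and converges to the Poisson process $\mathcal{P}_{f(x)}$ on $\R^m$, while the moment bound \eqref{integr1} furnishes the uniform integrability that upgrades this to $L^q$ convergence. I expect the only genuinely delicate point to be bookkeeping rather than probability, namely confirming that the chart formalism of \citep{PY3_2013} and the simultaneous $N^{1/m}$ scaling of both the base point and the whole sample in \eqref{integr1} match precisely the normalization under which their Theorem~3.1 is stated, so that the limit is the density-weighted integral as written and that the Euclidean ambient distances used in $\rho_k$ reduce, in the rescaled limit, to distances on the tangent $\R^m$. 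The apparent singularity of $\log\rho_k^m$ as $\rho_k\to 0$ is harmless, being absorbed into the assumed finiteness of the $p$th moment.
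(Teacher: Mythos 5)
Your proposal is correct and follows essentially the same route as the paper, which states Theorem \ref{thmE1} simply as the specialization of Theorem 3.1 of \citet{PY3_2013} to the functional $\xi(x,\mathcal{X})=\log\left(e^{-\psi(k)}V_m\rho_k^m(x,\mathcal{X})\right)$ without further argument. Your explicit verification of translation invariance and of stabilization with radius $\rho_k(\mathbf{0},\mathcal{P}_\lambda)$ supplies exactly the hypothesis-checking that the paper leaves implicit, and it is accurate.
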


For the bounded random variables $X_i,i\geq 1$ and $\rho_k(x,\mathcal{X}_N),$ we generalize Lemma 7.8 from \citep{PY3_2013}, which was proved for the case $k=1.$ 
\begin{lemma}
\label{lmm12}
Let $f$ is bounded  and has compact support on $\mathcal{M},$ then for any $\delta\in (0,m)$ $$\sup_{N\geq k} \E\left[\rho_k^{\delta}\left(N^{\frac{1}{m}} X_1,N^{\frac{1}{m}} \mathcal{X}_N\right)\right]<\infty.$$
\end{lemma}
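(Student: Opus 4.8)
The plan is to reduce the claim to a single uniform tail estimate for the $k$th nearest neighbour distance, and then to control that tail by a \emph{deterministic} counting bound rather than by any pointwise analysis of the density. In this approach the only geometric input that is really needed is that $\mathrm{supp}\,f$ is a compact subset of an $m$-dimensional $C^1$ manifold; the boundedness of $f$ serves only to place us in the standing setup. First I would remove the scaling: dilating an entire configuration by $N^{1/m}$ dilates every Euclidean distance by the same factor, so $\rho_k(N^{1/m}X_1,N^{1/m}\mathcal{X}_N)=N^{1/m}\rho_k(X_1,\mathcal{X}_N)$ and the assertion is equivalent to $\sup_{N\ge k}N^{\delta/m}\E[\rho_k^{\delta}(X_1,\mathcal{X}_N)]<\infty$. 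Using the layer-cake identity $\E[\rho_k^{\delta}]=\int_0^\infty \delta t^{\delta-1}\pr(\rho_k(X_1,\mathcal{X}_N)>t)\,dt$, everything reduces to an $N$-uniform bound on $\pr(\rho_k(X_1,\mathcal{X}_N)>t)$.

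The heart of the argument is a configuration-free counting bound. Write $K=\mathrm{supp}\,f$. Since $K$ is compact inside an $m$-dimensional $C^1$ manifold, for $t\le t_0$ it can be partitioned into at most $C\,t^{-m}$ cells of Euclidean diameter $<t$: cover $K$ by finitely many charts, each locally Lipschitz on a bounded domain, subdivide those domains by cubes of side $\asymp t$, and disjointify. If a single cell contains at least $k+1$ sample points, then every point of that cell has at least $k$ other sample points within distance $<t$, so its $k$th nearest neighbour distance is $<t$. Consequently every $X_i$ with $\rho_k(X_i,\mathcal{X}_N)>t$ lies in a cell holding at most $k$ points, whence, deterministically, $\#\{i:\rho_k(X_i,\mathcal{X}_N)>t\}\le k\,C\,t^{-m}$.

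Taking expectations and using that the $X_i$ are identically distributed gives $N\,\pr(\rho_k(X_1,\mathcal{X}_N)>t)=\E\,\#\{i:\rho_k(X_i,\mathcal{X}_N)>t\}\le kC t^{-m}$, so that
\[
\pr(\rho_k(X_1,\mathcal{X}_N)>t)\le \min\Bigl(1,\tfrac{kC}{N}\,t^{-m}\Bigr),\qquad t\le t_0,
\]
while $\pr(\rho_k(X_1,\mathcal{X}_N)>t)=0$ once $t>\mathrm{diam}(K)$. I would then insert this into the tail integral and split at the crossover scale $t_*=(kC/N)^{1/m}$: on $[0,t_*]$ use the bound $1$, giving $\int_0^{t_*}\delta t^{\delta-1}\,dt=t_*^{\delta}\asymp N^{-\delta/m}$; on $[t_*,t_0]$ use $kC t^{-m}/N$, giving $\tfrac{\delta kC}{N}\int_{t_*}^{t_0}t^{\delta-1-m}\,dt$, an integral that is dominated by its lower endpoint precisely because $\delta<m$ and is again of order $N^{-\delta/m}$; the remaining range $[t_0,\mathrm{diam}(K)]$ contributes only $O(1/N)=O(N^{-\delta/m})$. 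Multiplying through by $N^{\delta/m}$ produces a bound independent of $N$.

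The main obstacle is the deterministic counting step: one must build the cover of $K$ at scale $t$ with the correct exponent $t^{-m}$ (this is exactly where the manifold dimension and compactness enter) and verify the dichotomy ``a full cell forces a small $k$th nearest neighbour distance'' for general $k$, which is the point at which the reasoning genuinely extends the $k=1$ case of \citet{PY3_2013}. Once that estimate is in hand, the remaining integration is routine, and it is in that integration that the hypothesis $\delta\in(0,m)$ is indispensable, since $\delta=m$ would introduce a spurious logarithmic factor and $\delta>m$ would break the uniform bound.
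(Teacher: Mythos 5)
Your argument is correct, and it reaches the conclusion by a genuinely different route from the paper. Both proofs rest on the same two pillars — exchangeability of the sample and a \emph{deterministic}, configuration-free estimate valid on the compact support covered by finitely many bi-Lipschitz charts — but they deploy them differently. The paper converts $\E[\rho_k^{\delta}(X_1,\mathcal{X}_N)]$ into $\tfrac1N\E\sum_{x\in\mathcal{X}_N}\rho_k^{\delta}(x,\mathcal{X}_N)$ and then bounds the inner sum by $C\,N^{1-\delta/m}$ by pulling the points back through the charts and invoking the Euclidean growth lemma \cite[Lemma 3.3]{yukich2006} together with the bi-Lipschitz comparison \cite[Lemma 4.1]{PY3_2013}. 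You instead establish the pointwise tail bound $\pr\bigl(\rho_k(X_1,\mathcal{X}_N)>t\bigr)\le\min\bigl(1,kC t^{-m}/N\bigr)$ via the pigeonhole dichotomy on a $t$-mesh of $\mathrm{supp}\,f$ (a cell of diameter $<t$ holding $k+1$ sample points forces $\rho_k<t$ for each of them), and then integrate by the layer-cake formula with the split at $t_*\asymp(k/N)^{1/m}$. In effect you have reproved the cited Euclidean lemma from scratch rather than quoting it, which makes your proof self-contained and, as a bonus, produces a reusable $N$-uniform tail estimate rather than only the $\delta$-moment; the price is a longer computation where the paper simply delegates. Two points deserve the care you give them: the cells must have small \emph{ambient} Euclidean diameter (since $\rho_k$ is the Euclidean distance in $\R^d$), which is exactly what the Lipschitz bound on each chart map over a bounded domain provides; and the crossover analysis tacitly assumes $t_*\le t_0$, i.e.\ $N$ large, but the finitely many remaining $N$ contribute trivially because each individual expectation is bounded by $N^{\delta/m}(\mathrm{diam}\,\mathrm{supp}f)^{\delta}<\infty$. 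Your observation that boundedness of $f$ plays no role in this particular lemma (only compactness of the support does) is also accurate and consistent with the paper, where boundedness is used only for the negative-moment bound via \cite[Lemma 7.5]{PY3_2013}.
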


\begin{proof} 
The proof is very similar to  \cite[Lemma 7.8]{PY3_2013}.   
Recall that $\mathcal{M}$ has the atlas $((U_i,g_i), i\in I_0),$ where $I_0=\{1,\ldots,i_0\},$ and there exist $\delta_i,x_i,i\in I_0$ such that $\mathcal{M}\in \cup_{i\in I_0}B_{\delta_i}(y_i).$ 

Denote $A_i=B_{\delta_i}\setminus \cup_{j<i}B_{\delta_j}(y_j).$ Since $\mathrm{supp}(f)$ is bounded then there exist $i_0\in \N$ and constant $C>0$ such that 
\begin{align}
\nonumber    &\E \left[N^{\frac{\delta}{m}}\rho^\delta_k(X_1,\mathcal{X}_N)\right]= N^{\frac{\delta}{m}-1}\E\left(\sum_{x\in \mathcal{X}_N}\rho^\delta_k(x,\mathcal{X}_N)\right)\\
\label{lmm12:eq}    &\leq N^{\frac{\delta}{m}-1}\left[\sum_{i=1}^{i_0}\sum_{x\in A_i\cap \mathcal{X}_N}\rho^\delta_k(x,A_i\cap \mathcal{X}_N)+C\right].
\end{align}
Now we prove that for all finite $\mathcal{Y}\subset A_i$
$$\sum_{x\in \mathcal{Y}}\rho^\delta_k(x,\mathcal{Y})\leq C_i \left[\mathrm{card}( \mathcal{Y})\right]^{1-\frac{\delta}{m}}.$$
where $C_i>0.$
Let $\mathcal{Y}\subset A_i$ and $y_j\in \mathcal{Y}$ be a the $j$th nearest neighbour of $x\in \mathcal{Y}.$ Taking $z_j\in \mathcal{Y}$ such that $g_i^{-1}(z_j)$ to be $j$-th nearest neighbour of $g_i^{-1}(x)$ in $g_i^{-1}(\mathcal{Y}),$ we have from \cite[Lemma 4.1]{PY3_2013} that 
\begin{align*}
    \rho_k(x,\mathcal{Y})&=\max\{\|y_1-x\|,\ldots,\|y_k-x\|\}\leq \max\{\|z_1-x\|,\ldots,\|z_k-x\|\}\\
    &\leq C_i \max\left\{\left\|g_i^{-1}(z_1)-g_i^{-1}(x)\right\|,\ldots,\left\|g_i^{-1}(z_k)-g_i^{-1}(x)\right\|\right\}=C_i \rho_k(g_i^{-1}(x),g_i^{-1}(\mathcal{Y})).
\end{align*}
Thus, from \cite[Lemma 3.3]{yukich2006} we have for any $\delta \in (0,m)$ \begin{align*}
    \sum_{x\in A_i\cap \mathcal{X}_N}\rho^\delta_k(x,A_i\cap \mathcal{X}_N) \leq C_i [\mathrm{diam}(g_i^{-1}(A_i\cap \mathcal{X}_N))]^\delta [\mathrm{card}(g_i^{-1}(A_i\cap \mathcal{X}_N))]^{1-\frac{\delta}{m}} \leq \tilde{C}_i N^{1-\frac{\delta}{m}}.
\end{align*} 
Hence, the right hand side of \eqref{lmm12:eq} is bounded above uniformly.
\end{proof}
Now we prove the $L^2$-convergence of the $k$th nearest neighbour estimator
\begin{equation}
  \widehat{H}_{N,k}(\mathcal{X}_N)=  \frac{1}{N}\sum_{x\in \mathcal{X}_N}\xi\left(N^{\frac{1}{m}} x,N^{\frac{1}{m}} \mathcal{X}_N\right),
\end{equation}
which is an extension from the case $k=1$ to $k\geq 1$ of Theorem 2.4. from  \citep{PY3_2013}.
\begin{theorem}
\label{thm:L2}
Suppose  $f$ is bounded and has compact support.  Then for every fixed $k\geq 1$
\begin{equation}
\E\left[\widehat{H}_{N,k}(\mathcal{X}_N)-H(X)\right]^2 \to 0  \quad \text{ as } N\to \infty.
\end{equation}
\end{theorem}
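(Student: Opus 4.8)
The plan is to deduce the theorem from Theorem \ref{thmE1} applied with $q=2$: once the uniform integrability bound \eqref{integr1} is verified for some $p\geq 2$, Theorem \ref{thmE1} delivers the $L^2$-convergence of $\widehat{H}_{N,k}(\mathcal{X}_N)$ to $\int_{\mathcal{M}}\E[\xi(\mathbf{0},\mathcal{P}_{f(x)})]f(x)\nu(dx)$, and the only remaining task is to identify this limit with $H(X)$. Throughout I would exploit the scaling identity $\rho_k(N^{1/m}x,N^{1/m}\mathcal{X})=N^{1/m}\rho_k(x,\mathcal{X})$, which gives
\begin{equation*}
\xi\!\left(N^{1/m}X_1,N^{1/m}\mathcal{X}_N\right)=\log\!\left(e^{-\psi(k)}V_m\,N\,\rho_k^m(X_1,\mathcal{X}_N)\right).
\end{equation*}

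First I would check \eqref{integr1} with $p=2$ by splitting $|\xi|^2$ into the contributions of the positive and negative parts of the logarithm. For the positive part I would use the elementary inequality $(\log u)_+\leq (\delta')^{-1}u^{\delta'}$ to obtain, up to additive and multiplicative constants, $(\xi)_+^2\leq C\,(N\rho_k^m)^{2\delta'}$. Choosing $\delta'<1/2$ makes $\delta:=2m\delta'\in(0,m)$, and then $\E[(N\rho_k^m)^{2\delta'}]=\E[\rho_k^{\delta}(N^{1/m}X_1,N^{1/m}\mathcal{X}_N)]$ is bounded uniformly in $N$ directly by Lemma \ref{lmm12}.

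The main obstacle is the negative part $(\xi)_-$, namely controlling the lower tail of $\rho_k$ (the clustering of points near $X_1$), which Lemma \ref{lmm12} does not cover. Here I would use the boundedness of $f$: the probability that a single independent copy lands within Euclidean distance $r$ of $X_1$ on $\mathcal{M}$ is at most $Cr^m$, so a binomial-tail estimate yields
\begin{equation*}
\pr\!\left(\rho_k(X_1,\mathcal{X}_N)\leq r\right)\leq \binom{N-1}{k}(Cr^m)^k\leq C'\,N^k r^{mk}.
\end{equation*}
After the $N^{1/m}$-scaling this reads $\pr(N^{1/m}\rho_k(X_1,\mathcal{X}_N)\leq s)\leq C's^{mk}$ uniformly in $N$, so the law of $U:=e^{-\psi(k)}V_m N\rho_k^m$ satisfies $\pr(U\leq u)\leq C'' u^{k}$. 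Since $(\xi)_-=(-\log U)_+$, the polynomial small-ball decay becomes an exponential tail in the $\log$-scale, whence $\E[(-\log U)_+^{2}]=\int_0^\infty 2t\,\pr(U<e^{-t})\,dt\leq \int_0^\infty 2C'' t\,e^{-kt}\,dt<\infty$ uniformly in $N$. Combining the two parts secures \eqref{integr1}.

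Finally I would identify the limit. For a homogeneous Poisson process $\mathcal{P}_\lambda$ of intensity $\lambda=f(x)$ in $\R^m$, the normalized $k$th nearest-neighbour volume $T:=\lambda V_m\rho_k^m(\mathbf{0},\mathcal{P}_\lambda)$ has the $\mathrm{Gamma}(k,1)$ distribution, so that
\begin{equation*}
\E[\xi(\mathbf{0},\mathcal{P}_{\lambda})]=-\psi(k)+\E[\log T]-\log\lambda=-\log\lambda,
\end{equation*}
because $\E[\log T]$ for $T\sim\mathrm{Gamma}(k,1)$ equals the digamma value $\sum_{j=1}^{k-1}\frac1j-\gamma=\psi(k)$, precisely the centering built into the estimator. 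Substituting $\lambda=f(x)$ gives $\int_{\mathcal{M}}\E[\xi(\mathbf{0},\mathcal{P}_{f(x)})]f(x)\nu(dx)=-\int_{\mathcal{M}}\log f(x)\,f(x)\nu(dx)=H(X)$, which completes the proof.
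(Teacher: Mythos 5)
Your proof is correct and follows the same skeleton as the paper's: apply Theorem \ref{thmE1} with $q=2$, verify the moment condition \eqref{integr1} by splitting the logarithm into its positive and negative parts, control the positive part via Lemma \ref{lmm12} (your choice $\delta=2m\delta'<m$ is exactly what is needed there), and identify the limit through the $\mathrm{Gamma}(k,1)$ law of $\lambda V_m\rho_k^m(\mathbf{0},\mathcal{P}_\lambda)$, whose logarithmic moment is precisely $\psi(k)$ --- the paper carries out the same computation by writing the density integral explicitly. The one place you genuinely diverge is the negative part of the logarithm: the paper uses the monotonicity $\rho_k\geq\rho_1$ to reduce to $\sup_{N}\E\left[\rho_1^{-\delta}\left(N^{1/m}X_1,N^{1/m}\mathcal{X}_N\right)\right]<\infty$, which it imports from Lemma 7.5 of the Penrose--Yukich reference, whereas you prove a small-ball bound from scratch via the union estimate $\pr(\rho_k(X_1,\mathcal{X}_N)\leq r)\leq\binom{N-1}{k}(Cr^m)^k$ and convert the resulting polynomial tail $\pr(U\leq u)\leq C''u^k$ into an integrable exponential tail on the log-scale. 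Your route is self-contained and even yields a stronger decay ($u^k$ rather than $u$), at the modest cost of needing the uniform volume estimate $\int_{\{\|z-y\|\leq r\}}f\,d\nu\leq Cr^m$; this does hold here because $f$ is bounded with compact support and the charts of the $C^1$ manifold are locally bi-Lipschitz, the same fact already underlying Lemma \ref{lmm12}. Either treatment is adequate; nothing essential is missing from your argument.
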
 

\begin{proof} 
We apply Theorem \ref{thmE1}. 
Repeating the lines of the proof of \citep[Theorem 3]{Mehmet}, we compute $\E[ \xi\left(\mathbf{0},\mathcal{P}_\lambda\right)],$ where 
$$\xi\left(\mathbf{0},\mathcal{P}_{\lambda}\right)=\log V_m -\psi(k) + m \log \rho_k(\mathbf{0},\mathcal{P}_\lambda).$$ The random variable  $\rho_k(\mathbf{0},\mathcal{P}_\lambda)$ is the distance to the $k$th point of $\mathcal{P}_\lambda$ from $\mathbf{0}.$
Therefore, 
\begin{align*}&\pr(\rho_k(\mathbf{0},\mathcal{P}_\lambda)\leq t)=\pr(\mathcal{P}_\lambda\cap B_{t}(\mathbf{0}) \geq k)\\
&=1-\sum_{j=0}^{k-1}\frac{1}{j!}e^{-\lambda|B_{t}(\mathbf{0})|}(\lambda|B_{t}(\mathbf{0})|)^j=1-\sum_{j=0}^{k-1}\frac{1}{j!}e^{-\lambda t^m V_m}(\lambda t^m V_m)^j, t\geq 0.
\end{align*} Then
\begin{align*}
    m\E[\log \rho_k(\mathbf{0},\mathcal{P}_\lambda)]&=\int_{0}^\infty \log t^m \frac{(\lambda V_m)^k  (t^m)^{(k-1)}}{(k-1)!}e^{-\lambda V_m t^m} m t^{m-1}dt\\
    &=-\log (\lambda V_m) + \int_{0}^\infty \log y \frac{y^{k-1}}{(k-1)!}e^{-y}dy\\
    &=-\log \lambda  -\log V_m  + \psi(k).
\end{align*}
Thus, $$\int_{\mathcal{M}} \E[ \xi\left(\mathbf{0},\mathcal{P}_{f(x)}\right)]f(x)\nu(dx)=-\int_{\mathcal{M}} (\log f(x)) f(x)\nu(dx)=H(X).$$

Second, we check condition \eqref{integr1}. Note that 
for every $\delta\in (0,1)$ and $p>1$ there exists $C>0$ such that $|\log t|^p\leq Ct^{-\delta}\mathbbm{1}_{[0,1]}(t)+C t^{\delta}\mathbbm{1}_{[1,\infty)}(t), t>0.$ Then  \begin{align}
    \nonumber\E \left|\xi\left(N^{\frac{1}{m}} X_1,N^{\frac{1}{m}} \mathcal{X}_N\right)\right|^p &\leq 2^{p-1}\left|\log V_m - \psi(k)\right|^p+2^{p-1} \E \left|\log \rho_k^m\left(N^{\frac{1}{m}} X_1,N^{\frac{1}{m}} \mathcal{X}_N\right) \right|^p\\
    \nonumber&\leq 2^{p-1}\left|\log V_m - \psi(k)\right|^p\\
\label{thm2:eq3}    &+2^{p-1} C\E  \rho_k^{-\delta}\left(N^{\frac{1}{m}} X_1,N^{\frac{1}{m}} \mathcal{X}_N\right) \mathbbm{1}_{[0,1]}\left(\rho_k^{\delta}\left(N^{\frac{1}{m}} X_1,N^{\frac{1}{m}} \mathcal{X}_N\right)\right) \\
\label{thm2:eq4}    &+2^{p-1} C\E  \rho_k^{\delta}\left(N^{\frac{1}{m}} X_1,N^{\frac{1}{m}} \mathcal{X}_N\right) \mathbbm{1}_{[1,\infty)}\left(\rho_k^{\delta}\left(N^{\frac{1}{m}} X_1,N^{\frac{1}{m}} \mathcal{X}_N\right)\right).
\end{align}
Term \eqref{thm2:eq3} is finite because
\begin{align}
    \nonumber&\sup_{N\geq k} \E  \rho_k^{-\delta}\left(N^{\frac{1}{m}} X_1,N^{\frac{1}{m}} \mathcal{X}_N\right) \mathbbm{1}_{[0,1]}\left(\rho_k^{\delta}\left(N^{\frac{1}{m}} X_1,N^{\frac{1}{m}} \mathcal{X}_N\right)\right) \\
    \label{cond2}&\leq \sup_{N\geq k} \E  \rho_1^{-\delta}\left(N^{\frac{1}{m}} X_1,N^{\frac{1}{m}} \mathcal{X}_N\right) <\infty,
\end{align}
where \eqref{cond2} is ensured  by \cite[Lemma 7.5]{PY3_2013} if $f$ is bounded and $\delta\in (0,m).$ 
Hence, for \eqref{integr1} to be satisfied,  it ramains to show that 
\begin{equation}
    \label{cond1}\sup_{N\geq k} \E \rho_k^{\delta}\left(N^{\frac{1}{m}} X_1,N^{\frac{1}{m}} \mathcal{X}_N\right)<\infty.
\end{equation}

Thus, applying Lemma \ref{lmm12} we get that \eqref{cond1} holds true if $0<\delta <m.$ 
\end{proof}

The 2-dimensional sphere $\Sp^2$ is a compact manifold with $d=3,$ $m=2$ and $\nu=\sigma.$ Thus, Theorem \ref{thm2} is valid for all bounded densities on $\Sp^2,$ $k$th nearest neighbour estimator has the form
\begin{equation*}
\widehat{H}_{N,k}(\mathcal{X}_N) = \frac{2}{N}\sum_{i=1}^{N}\log \rho_k(X_i,\mathcal{X}_N)-\psi(k)+ \log(N-1) + \log \pi,
\end{equation*}
and $\widehat{H}_{N,k}(\mathcal{X}_N)\to H(X)$ in $L^2(\Omega).$ This yields, that $\widehat{H}_{N,k}(\mathcal{X}_N)$ is a consistent estimator of the Shannon entropy.

\section{Estimation of parameters}

\label{est1}

\subsection{Fisher's maximum likelihood estimation}
\label{MLE}
Let $\mathcal{X}_N=\{\mathbf{x}_1,\ldots,\mathbf{x}_N\}$ be a random sample. We write down the log-likelihood $l(\mathcal{X}_N)$ for random samples from the introduced generalized von Mises-Fisher distributions. 
\begin{lemma}
Let $\mathcal{X}_{j,N}\sim \mathrm{GvMF}_{j,d}(\alpha,\kappa,\bmu),$ $j=1,2,3$ then 
\begin{align}
\label{log1}l(\mathcal{X}_{1,N})&=N \log c_{1,d}(\kappa,\alpha)+\frac{\kappa}{\alpha}\sum_{i=1}^N(\bmu^\T \mathbf{x}_i)^{<\alpha>},\\
       \label{log2} l(\mathcal{X}_{2,N})&=N \log c_{2,d}(\kappa,\alpha)-\frac{\kappa}{2^\alpha \alpha}\sum_{i=1}^N\| \mathbf{x}_i-\bmu\|^{2\alpha},\\
      \label{log3}  l(\mathcal{X}_{3,N})&=N \log c_{3,d}(\kappa,\alpha)+\frac{\kappa}{\alpha}\sum_{i=1}^N|\bmu^\T \mathbf{x}_i|^\alpha.
\end{align}
\end{lemma}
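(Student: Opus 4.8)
The plan is to compute each log-likelihood directly from the defining density, exploiting the fact that every density \eqref{def:gmf1:eq}, \eqref{def:gmf2:eq}, \eqref{def:gmf3:eq} is of exponential form, so that taking logarithms produces a clean additive decomposition. Since $\mathcal{X}_N=\{\mathbf{x}_1,\ldots,\mathbf{x}_N\}$ is an i.i.d.\ sample, the joint density factorizes as $\prod_{i=1}^N f_{\pmb{\xi}}(\mathbf{x}_i)$, and hence the log-likelihood is the sum of the individual log-densities,
\begin{equation*}
l(\mathcal{X}_N)=\sum_{i=1}^N \log f_{\pmb{\xi}}(\mathbf{x}_i).
\end{equation*}
This reduces the whole statement to evaluating $\log f_{\pmb{\xi}}$ for each of the three types and summing.

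First I would treat Type~I. Taking the logarithm of \eqref{def:gmf1:eq}, the exponential collapses and I obtain $\log f_{\pmb{\xi}}(\mathbf{x}_i)=\log c_{1,d}(\kappa,\alpha)+\frac{\kappa}{\alpha}(\bmu^\T\mathbf{x}_i)^{<\alpha>}$. Summing over $i=1,\ldots,N$ then yields \eqref{log1}, where the normalizing constant contributes the term $N\log c_{1,d}(\kappa,\alpha)$ since it is common to all $N$ factors. The Type~II and Type~III identities \eqref{log2} and \eqref{log3} follow in exactly the same way by taking logarithms of \eqref{def:gmf2:eq} and \eqref{def:gmf3:eq} respectively: in the Type~II case the exponent $-\frac{\kappa}{2^\alpha\alpha}\|\mathbf{x}_i-\bmu\|^{2\alpha}$ is summed, and in the Type~III case the exponent $\frac{\kappa}{\alpha}|\bmu^\T\mathbf{x}_i|^{\alpha}$ is summed.

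There is no genuine obstacle here; the result is an immediate consequence of the exponential-family structure of the densities together with the i.i.d.\ assumption, and no use of the earlier normalization computations or of Lemma~\ref{lmm1} is required. The only point that warrants a line of care is bookkeeping of the normalizing constant, to confirm that the $N$ identical factors $c_{j,d}(\kappa,\alpha)$ combine into the single additive term $N\log c_{j,d}(\kappa,\alpha)$ in each formula.
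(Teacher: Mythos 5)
Your proposal is correct and coincides with the paper's own argument, which simply states that the identities follow from direct calculation of $l(\mathcal{X}_N)$; you have merely written out that calculation explicitly (i.i.d.\ factorization plus taking logarithms of the exponential-form densities). Nothing further is needed.
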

\begin{proof}
The statements follow from the direct calculation of $l(\mathcal{X}_N).$
\end{proof}
In each case, we use numerical methods  to find the maximum likelihood estimates $(\hat{\mu}_{L},\hat{\kappa}_{L},\hat{\alpha}_{L})$ of $(\mu,\kappa,\alpha)$ which maximize the log-likelihoods \eqref{log1}-\eqref{log3}.

The problem becomes easier when parameter $\alpha$ is known. In such a case,
we can derive maximum likelihood estimates taking derivatives of \eqref{log1}-\eqref{log3}. In such a case, we have the estimates of $\mu$ as $\kappa$ as
\begin{itemize}
    \item Let $\mathcal{X}_N\sim \mathrm{GvMF}_{1,d}(\alpha,\kappa,\bmu),$ then 
    \begin{align*}
    &\hat{\bmu}_{L}=\arg\max_{\bmu\in \Sp^{d-1}}\sum_{i=1}^N(\bmu^\T \mathbf{x}_i)^{<\alpha>},\quad \frac{A_{1,d}(\hat{\kappa}_L,\alpha,\alpha)-A_{1,d}(-\hat{\kappa}_L,\alpha,\alpha)}{A_{1,d}(\hat{\kappa}_L,{\alpha},0)+A_{1,d}(-\hat{\kappa}_L,\alpha,0))}=\frac{1}{N}\sum_{i=1}^N(\bmu^\T_{L} \mathbf{x}_i)^{<\alpha>}.
    \end{align*}
    \item Let $\mathcal{X}_N\sim \mathrm{GvMF}_{2,d}(\alpha,\kappa,\bmu),$ then 
    \begin{align}
    &\hat{\bmu}_{L}=\arg \min_{\bmu\in \Sp^{d-1}}\sum_{i=1}^N\| \mathbf{x}_i-\bmu\|^{2\alpha},\quad 2^\alpha\frac{A_{2,d}(\hat{\kappa}_L,\alpha,\alpha)} {A_{2,d}(\hat{\kappa}_L,\alpha,0)}=\frac{1}{N}\sum_{i=1}^N\| \mathbf{x}_i-\hat{\bmu}_{L}\|^{2\alpha}.
    \end{align}
    \item Let $\mathcal{X}_N\sim \mathrm{GvMF}_{3,d}(\alpha,\kappa,\bmu),$ then 
    \begin{align}
    &\hat{\bmu}_{L}=\arg \max_{\bmu\in \Sp^{d-1}}\sum_{i=1}^N|\bmu^\T \mathbf{x}_i|^\alpha,\quad \frac{A_{1,d}(\hat{\kappa}_L,\alpha,\alpha)}{A_{1,d}(\hat{\kappa}_L,\alpha,0)}=\frac{1}{N}\sum_{i=1}^N|\hat{\bmu}_{L}^\T \mathbf{x}_i|^\alpha.
    \end{align}
\end{itemize}

\subsection{Method of moments}
\label{MME}
In this section we consider parameter estimation of generilized von Mises-Fisher distributions based on moments estimation.

In the case of non-axial random vector $\mathbf{X}\in \Sp^{d-1}$ we assume that $\|\E \mathbf{X}\|\neq 0.$ We know from Definition \ref{def:mdir} that $\hat{\bmu}:=\frac{\bar{\mathcal{X}}_N}{\|\bar{\mathcal{X}}_N\|},$ where $\bar{\mathcal{X}}_N=\frac{1}{N}\sum_{i=1}^N\mathbf{x}_i,$ is the natural estimator for mean direction parameter $\bmu$. In order to find estimates for parameters  $\alpha$ and $\kappa$ we need at least two more moment statistics. 
A
standard approach involves the use of the resultant length. For the second relations we choose $\E (\sign\{{\mathbf{X}_1}^{\T}\E {\mathbf{X}_1}\})$ and $\E(\left\| \mathbf{X}_2-\bmu\right\|^4)$ for vectors $\mathbf{X}_j\sim\mathrm{GvMF}_{j,d}(\alpha,\kappa,\mu)$ $(j=1,2).$ 
Applying  \eqref{GM2:m1} and \eqref{GM2:m2}, one can get  the estimators $\hat{\kappa},\hat{\alpha}$ as a solution of the following equations
\begin{align*}
 &\frac{A_{1,d}(\hat{\kappa},\hat{\alpha},1)- A_{1,d}(-\hat{\kappa},\hat{\alpha},1)}{ A_{1,d}(\hat{\kappa},\hat{\alpha},0)+ A_{1,d}(-\hat{\kappa},\hat{\alpha},0)}=\|\bar{\mathcal{X}}_{1,N}\|,
\frac{A_{1,d}(\hat{\kappa},\hat{\alpha},0)- A_{1,d}(-\hat{\kappa},\hat{\alpha},0)}{ A_{1,d}(\hat{\kappa},\hat{\alpha},0)+ A_{1,d}(-\hat{\kappa},\hat{\alpha},0)}=\frac{\sum_{i=1}^N \sign(x_i^{\T} \bar{\mathcal{X}}_{1,N})}{N\|\bar{\mathcal{X}}_{1,N}\|};\\
&\frac{A_{2,d}(\hat{\kappa},\hat{\alpha},1)}{ A_{2,d}(\hat{\kappa},\hat{\alpha},0)}=1-\|\bar{\mathcal{X}}_{2,N}\|,\quad 
\frac{A_{2,d}(\hat{\kappa},\hat{\alpha},2)}{ A_{2,d}(\hat{\kappa},\hat{\alpha},0)}=\frac{1}{4N}\sum_{i=1}^N\left\| {x}_i-\frac{\bar{\mathcal{X}}_{2,N}}{\|\bar{\mathcal{X}}_{2,N}\|}\right\|^4
\end{align*}
for  the samples $\mathcal{X}_{1,N}\sim\mathrm{GvMF}_{1,d}(\alpha,\kappa,\bmu)$ and $\mathcal{X}_{2,N}\sim\mathrm{GvMF}_{2,d}(\alpha,\kappa,\bmu),$ respectively. 
\begin{remark}
If the parameter $\alpha$ is known, we can reduce the problem of the moment estimation to the solution of one equation. Namely, 
$$\frac{A_{1,d}(\hat{\kappa},\alpha,1)- A_{1,d}(-\hat{\kappa},\alpha,1)}{ A_{1,d}(\hat{\kappa},\alpha,0)+ A_{1,d}(-\hat{\kappa},\alpha,0)}=\|\bar{\mathcal{X}}_{1,N}\|, \text{ and } \frac{A_{2}(\hat{\kappa},\alpha,1)}{ A_{2}(\hat{\kappa},\alpha,0)}=1-\|\bar{\mathcal{X}}_{2,N}\|.$$
\end{remark}



In the case of a symmetrically distributed random vector $\mathbf{X}\in \Sp^{d-1},$ $\E \mathbf{X}=0$ and the value ${\E \mathbf{X}}/{\|\E \mathbf{X}\|}$ is not defined.
 Recall  the tangent-normal decomposition \eqref{tang:dec} of a random vector $\mathbf{X}\in \Sp^{d-1},$ that is
$\mathbf{X}=\bmu \xi +\sqrt{1-\xi^2} \mathbf{Y},$ where $\bmu\in \Sp^{d-1}$ is a mean direction parameter, $\xi$ is a random variable on $[-1,1]$ independent of a uniformly distributed random vector $Y\in \Sp^{d-2}$ such that $\bmu \perp\mathbf{Y}.$  

To find relations which determine the parameter $\bmu$ and distribution $\xi$ we consider an orientation tensor $T(\mathbf{X})$ given by
\begin{equation}
\label{def:tensor}
    T(\mathbf{X})=\mathbf{X}\mathbf{X}^\T=\xi^2 \bmu\bmu^\T+\xi \sqrt{1-\xi^2} \left(\bmu \mathbf{Y}^\T+\mathbf{Y} \bmu^\T\right)+(1-\xi^2)\mathbf{Y}\mathbf{Y}^\T.
\end{equation}

Therefore, the mean orientation tensor is 
\begin{equation}
    \E T(\mathbf{X})=\E[\mathbf{X}\mathbf{X}^\T]= \bmu\bmu^\T\E\xi^2+(1-\E \xi^2)\E \mathbf{Y}\mathbf{Y}^\T.
\end{equation}

\begin{theorem}
\label{thm:moma}
Let a random vector $\mathbf{X}$ has a representation as above, i.e., $\mathbf{X}=\bmu \xi +\sqrt{1-\xi^2} \mathbf{Y}.$ Then
\begin{align}
    \label{thm:moma:1}
    \bmu \bmu^\T&=\sqrt{\frac{d-1}{d \E[\mathbf{X}(\E \mathbf{X}\mathbf{X}^{\T})\mathbf{X}^{\T}]-1}}\left(\E \mathbf{X}\mathbf{X}^{\T}-\frac{1}{d}I_d\right)+\frac{1}{d}I_d,\\
    \label{thm:moma:2}
    \E \xi^2&=\frac{1}{d}+\sqrt{\frac{d-1}{d}}\sqrt{\E[\mathbf{X}(\E \mathbf{X}\mathbf{X}^{\T})\mathbf{X}^{\T}] -\frac{1}{d}},\\
    \label{thm:moma:3}
    \E \xi^4&=\E\left[\mathbf{X}^{\T}\E T(\mathbf{X})\mathbf{X}-\frac{1-\E \xi^2}{d-1}\right]^2 \frac{d-1}{d \E[\mathbf{X}(\E \mathbf{X}\mathbf{X}^{\T})\mathbf{X}^{\T}]-1}.
\end{align}
where $I_d$ is $d\times d$ identity matrix.
\end{theorem}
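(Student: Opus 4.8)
The plan is to reduce all three identities to the spectral structure of the mean orientation tensor $S:=\E T(\mathbf{X})=\E[\mathbf{X}\mathbf{X}^\T]$. First I would record that, since $\mathbf{Y}$ is uniform on $\Sp^{d-2}$ inside the hyperplane $\bmu^\perp$, its second moment is the scaled projection $\E[\mathbf{Y}\mathbf{Y}^\T]=\frac{1}{d-1}(I_d-\bmu\bmu^\T)$ (the unique rotation-invariant symmetric operator supported on $\bmu^\perp$ with trace $1$). Substituting this into the already-derived formula $\E T(\mathbf{X})=\E[\xi^2]\,\bmu\bmu^\T+(1-\E[\xi^2])\E[\mathbf{Y}\mathbf{Y}^\T]$ produces the two-eigenvalue decomposition
\[
S=aP+\frac{1-a}{d-1}(I_d-P),\qquad a:=\E\xi^2,\quad P:=\bmu\bmu^\T,
\]
where $P$ and $I_d-P$ are orthogonal projections of ranks $1$ and $d-1$. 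Rearranging gives the backbone identity $S-\frac1d I_d=\frac{a-1/d}{d-1}(dP-I_d)$, which drives \eqref{thm:moma:1}.

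Next I would extract two scalar invariants of $S$. Because $\|\mathbf{X}\|=1$ we have $\mathrm{tr}(S)=\E\|\mathbf{X}\|^2=1$, and the scalar in the statement is $\E[\mathbf{X}^\T S\mathbf{X}]=\mathrm{tr}\!\big(S\,\E[\mathbf{X}\mathbf{X}^\T]\big)=\mathrm{tr}(S^2)$. Evaluating $\mathrm{tr}\big((S-\frac1d I_d)^2\big)$ in two ways — directly as $\mathrm{tr}(S^2)-\frac1d$, and via the decomposition as $\frac{d}{d-1}(a-\frac1d)^2$ — yields the quadratic relation $(a-\frac1d)^2=\frac{d-1}{d}\big(\E[\mathbf{X}^\T S\mathbf{X}]-\frac1d\big)$. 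Taking the root with $a\ge 1/d$ (the branch relevant to the unimodal/concentrated distributions here, whose $\E\xi^2$ exceeds the uniform value $1/d$) gives \eqref{thm:moma:2} immediately. This step also supplies the nondegeneracy $d\,\E[\mathbf{X}^\T S\mathbf{X}]>1$ needed to legitimize the divisions that follow, and the convenient form $ad-1=\sqrt{(d-1)\big(d\,\E[\mathbf{X}^\T S\mathbf{X}]-1\big)}$.

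Identity \eqref{thm:moma:1} then follows by solving $S-\frac1d I_d=\frac{a-1/d}{d-1}(dP-I_d)$ for $P$ and substituting the expression for $ad-1$ above, which turns the coefficient $\frac{d-1}{d(a-1/d)}$ into $\sqrt{\frac{d-1}{d\,\E[\mathbf{X}^\T S\mathbf{X}]-1}}$. For \eqref{thm:moma:3} I would use a pointwise (not merely in-expectation) identity: since $\xi=\bmu^\T\mathbf{X}$ and $\|\mathbf{X}\|=1$, one has $\mathbf{X}^\T S\mathbf{X}=a\xi^2+\frac{1-a}{d-1}(1-\xi^2)$, whence
\[
\mathbf{X}^\T S\mathbf{X}-\frac{1-a}{d-1}=\frac{ad-1}{d-1}\,\xi^2 .
\]
Squaring, taking expectations, and inserting $(ad-1)^2=(d-1)\big(d\,\E[\mathbf{X}^\T S\mathbf{X}]-1\big)$ from the previous step solves for $\E\xi^4$ and reproduces \eqref{thm:moma:3} (with $a=\E\xi^2$ and $S=\E T(\mathbf{X})$).

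The calculations are essentially linear algebra on the operator $S$, which has only two distinct eigenvalues; the two genuinely delicate points are the choice of the positive square root in \eqref{thm:moma:2}, which must be justified by the assumption $\E\xi^2\ge 1/d$ (equivalently $d\,\E[\mathbf{X}^\T S\mathbf{X}]\ge 1$), and the fact that the procedure recovers $\bmu$ only through $\bmu\bmu^\T$, i.e.\ up to sign — precisely what the axial setting motivating Theorem \ref{thm:moma} calls for. I expect the verification of $\E[\mathbf{Y}\mathbf{Y}^\T]=\frac{1}{d-1}(I_d-\bmu\bmu^\T)$ and the sign/nondegeneracy discussion to deserve the most care, while the remaining manipulations are routine.
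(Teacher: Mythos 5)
Your proposal is correct and follows essentially the same route as the paper: both rest on the two-eigenvalue decomposition $\E[\mathbf{X}\mathbf{X}^\T]=\E\xi^2\,\bmu\bmu^\T+\frac{1-\E\xi^2}{d-1}(I_d-\bmu\bmu^\T)$ and the pointwise identity $\mathbf{X}^\T\E T(\mathbf{X})\mathbf{X}=\xi^2\E\xi^2+\frac{1-\E\xi^2}{d-1}(1-\xi^2)$, your trace computation of $\E[\mathbf{X}^\T S\mathbf{X}]=\mathrm{tr}(S^2)$ being just a repackaging of the paper's direct expectation of that identity. Your explicit flagging of the positive square-root branch (requiring $\E\xi^2\ge 1/d$) is a point of care the paper leaves implicit.
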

\begin{proof}
Let $U_{\bmu}\in SO(d),$ such that $\bmu=U_{\bmu} \mathbf{e}_x,$ where $\mathbf{e}_x=(1,0,\ldots,0)^{\T}.$ Denote by $\tilde{\mathbf{Y}}=U_{\bmu}^{-1}\mathbf{Y}.$ The vector $\tilde{\mathbf{Y}}$ is uniformly distributed on $\Sp^{d-2}$ with the first coordinate equal 0. Then $\mathbf{X}=U_{\bmu}\left(\xi \mathbf{e}_x+\sqrt{1-\xi^2}\tilde{\mathbf{Y}}\right)$ and 
$\E \mathbf{X}\mathbf{X}^{\T}=U_{\bmu}\left(\mathbf{e}_x\mathbf{e}_x^{\T} \E \xi^2+(1-\E \xi^2)\E \tilde{\mathbf{Y}}\tilde{\mathbf{Y}}^{\T}\right)U_{\bmu}^{-1}.$
 It follows from the symmetry that $\E \tilde{\mathbf{Y}} \tilde{\mathbf{Y}}^{\T}=\frac{1}{d-1}\left(\begin{matrix}0&0\\0&I_{d-1}\end{matrix}\right).$ 
Therefore,
\begin{equation}
\label{thm:moma:eq1}
    \E \mathbf{X}\mathbf{X}^{\T} = U_{\bmu}\left(\mathbf{e}_x\mathbf{e}_x^{\T}\E \xi^2+\frac{1-\E \xi^2}{d-1}(I_d-\mathbf{e}_x\mathbf{e}^{\T}_x)\right)U_{\bmu}^{\T}=\E \xi^2 \bmu \bmu^{\T} + \frac{1-\E \xi^2}{d-1}(I_d-\bmu \bmu^{\T}).
\end{equation}
Thus, $\bmu \bmu^{\T}=\frac{d-1}{d \E\xi^2-1}\left(\E \mathbf{X}\mathbf{X}^{\T}-\frac{1-\E \xi^2}{d-1}\right).$ Then consider $\mathbf{X}^{\T}\E T(\mathbf{X})\mathbf{X}.$
From \eqref{thm:moma:eq1} we have 
$$\mathbf{X}^{\T}\E T(\mathbf{X})\mathbf{X}= \mathbf{X}^{\T}\bmu \bmu^{\T}\mathbf{X}\E\xi^2+\frac{1-\E \xi^2}{d-1}(1-\mathbf{X}^{\T}\bmu \bmu^{\T}\mathbf{X})=\xi^2\E\xi^2+\frac{1-\E \xi^2}{d-1}(1-\xi^2).$$
and  $\E[\mathbf{X}^{\T}\E T(\mathbf{X})\mathbf{X}]= (\E\xi^2)^2+\frac{(1-\E \xi^2)^2}{d-1}.$ This yields
$\E \xi^2=\frac{1}{d}+\sqrt{\frac{d-1}{d}}\sqrt{\E[\mathbf{X}(\E \mathbf{X}\mathbf{X}^{\T})\mathbf{X}^{\T}] -\frac{1}{d}}$ and 
$$\bmu \bmu^{\T}=\sqrt{\frac{d-1}{d \E[\mathbf{X}(\E \mathbf{X}\mathbf{X}^{\T})\mathbf{X}^{\T}]-1}}\left(\E \mathbf{X}\mathbf{X}^{\T}-\frac{1}{d}I_d\right)+\frac{1}{d}I_d.$$
Finally,
$$\E\left[\mathbf{X}^{\T}\E T(\mathbf{X})\mathbf{X}-\frac{1-\E \xi^2}{d-1}\right]^2= \left(\frac{d\E \xi^2-1}{d-1}\right)^2\E \xi^4.$$
\end{proof}

Note, that for an axial vector $\mathbf{X},$ the random variable $\xi$ has a symmetric distribution on $[-1,1],$ therefore $\E \xi=0.$ So, if $\xi$ has two-dimensional parametric distribution, one get from Theorem \ref{thm:moma} the parameter estimates for $\xi$.

We apply Theorem \ref{thm:moma} for the random sample $\mathcal{X}_N=\{\mathbf{x}_1,\ldots,\mathbf{x}_N\}$  from $\mathrm{GvMF}_{3,d}(\alpha,\kappa,\bmu).$  Denote by $ \bar{T}=\frac{1}{n}\sum_{i=1}^n {\mathbf{x}_i}{\mathbf{x}_i}'.$ 
Then  $\E \xi^2=\E (\bmu' \mathbf{x}_1)^2$ and $\E \xi^4=\E (\bmu' \mathbf{x}_1)^4$ are  given in Proposition \ref{prop3}.

\begin{corollary}
\label{cor:moma2}
Let $\mathcal{X}_N\sim \mathrm{GvMF}_{3,d}(\alpha,\kappa,\bmu)$ and denote by $ \bar{T}=\frac{1}{N}\sum_{i=1}^N {\mathbf{x}_i}{\mathbf{x}_i}^{\T}$ and $\bar{V}=\frac{1}{N}\sum_{i=1}^N \mathbf{x}_i\bar{T_1}\mathbf{x}_i^{\T}.$ 
Then
\eqref{thm:moma:1}--\eqref{thm:moma:3} hold true and 
\begin{equation}
\label{cor:moma2:eq} 
\begin{gathered}
    \hat{\bmu}\hat{\bmu}^{\T}=\sqrt{\frac{d-1}{d \bar{V}-1}}\left(\bar{T}-\frac{1}{d}I_d\right)+\frac{1}{d}I_d,\quad \frac{A_{1}(\hat{\kappa},\hat{\alpha},2)}{A_{1}(\kappa,\alpha,0)}=\frac{1}{d}+\sqrt{\frac{d-1}{d}}\sqrt{\bar{V} -\frac{1}{d}},\\
    \frac{A_{1}(\kappa,\alpha,4)}{A_{1}(\kappa,\alpha,0)}=\frac{d-1}{(d \bar{V}-1)N}\sum_{i=1}^N\left({\mathbf{x}_i}^{\T}\bar{T}{\mathbf{x}_i}-\frac{1}{d}-\frac{\sqrt{d \bar{V}-1}}{d\sqrt{d-1}}\right)^2 .
\end{gathered}
\end{equation}
\end{corollary}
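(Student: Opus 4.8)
The plan is to read Corollary~\ref{cor:moma2} as the method-of-moments (plug-in) instance of Theorem~\ref{thm:moma}, specialised to the axial law $\mathrm{GvMF}_{3,d}(\alpha,\kappa,\bmu)$. First I would note that for this distribution the tangent--normal decomposition \eqref{tang:dec} reads $\mathbf{X}=\bmu\xi+\sqrt{1-\xi^2}\,\mathbf{Y}$ with $\xi=\bmu^\T\mathbf{X}$, and that $\xi$ is symmetric on $[-1,1]$, so $\E\xi=0$ and we are genuinely in the symmetric setting for which Theorem~\ref{thm:moma} was stated. Consequently \eqref{thm:moma:1}--\eqref{thm:moma:3} apply verbatim; in particular this already yields the first identity claimed in the corollary, and it remains only to make the moment equations explicit.

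Next I would evaluate the even moments of $\xi$ using Proposition~\ref{prop3}. Since $\xi^2=|\bmu^\T\mathbf{X}|^2$ and $\xi^4=|\bmu^\T\mathbf{X}|^4$, the choices $\beta=2$ and $\beta=4$ give $\E\xi^2=A_{1,d}(\kappa,\alpha,2)/A_{1,d}(\kappa,\alpha,0)$ and $\E\xi^4=A_{1,d}(\kappa,\alpha,4)/A_{1,d}(\kappa,\alpha,0)$, which are exactly the left-hand sides appearing in \eqref{cor:moma2:eq}. The estimating equations then follow by the plug-in rule: the two population quantities on which \eqref{thm:moma:1}--\eqref{thm:moma:3} depend, namely $\E\mathbf{X}\mathbf{X}^\T$ and the scalar $V:=\E[\mathbf{X}^\T\,\E T(\mathbf{X})\,\mathbf{X}]$, are replaced by their sample versions $\bar T=\frac1N\sum_i\mathbf{x}_i\mathbf{x}_i^\T$ and $\bar V=\frac1N\sum_i\mathbf{x}_i^\T\bar T\mathbf{x}_i$. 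With this substitution the first two displays in \eqref{cor:moma2:eq} are direct transcriptions of \eqref{thm:moma:1} and \eqref{thm:moma:2}.

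The only part needing genuine calculation is the third display, where both the centring constant $\frac{1-\E\xi^2}{d-1}$ and the prefactor $\frac{d-1}{d\E[\cdots]-1}$ of \eqref{thm:moma:3} must be rewritten in terms of $\bar V$. Here I would first square \eqref{thm:moma:2} to obtain the key identity $d\E\xi^2-1=\sqrt{(d-1)(dV-1)}$, which turns $\big(\frac{d-1}{d\E\xi^2-1}\big)^2$ into $\frac{d-1}{dV-1}$, matching the factor $\frac{d-1}{d\bar V-1}$ once $V$ is replaced by $\bar V$; and I would use \eqref{thm:moma:2} again to reduce the centring constant to $\frac1d-\frac{\sqrt{d\bar V-1}}{d\sqrt{d-1}}$. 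Replacing the outer expectation $\E[\,\cdot\,]^2$ by the sample average $\frac1N\sum_{i=1}^N(\cdot)^2$ over the residuals $\mathbf{x}_i^\T\bar T\mathbf{x}_i$ then produces the stated equation. I expect the main obstacle to be purely this bookkeeping: correctly pushing the two nested square roots through the substitution and, above all, tracking the sign of the second term in the centring constant, which is the one spot where a slip is easy. No analytic input beyond Theorem~\ref{thm:moma} and Proposition~\ref{prop3} is required; any consistency statement for the resulting estimators would follow separately from the law of large numbers applied to $\bar T$ and $\bar V$.
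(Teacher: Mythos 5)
Your approach is exactly the one the paper intends: the corollary is stated as an immediate consequence of Theorem~\ref{thm:moma} combined with Proposition~\ref{prop3} (with $\beta=2,4$) and the plug-in substitution $\E\mathbf{X}\mathbf{X}^\T\mapsto\bar T$, $\E[\mathbf{X}^\T\E T(\mathbf{X})\mathbf{X}]\mapsto\bar V$, and the paper gives no further argument. One remark on the spot you yourself flagged: your (correct) evaluation of the centring constant as $\tfrac1d-\tfrac{\sqrt{d\bar V-1}}{d\sqrt{d-1}}$ means that after subtraction the last display should read $\mathbf{x}_i^\T\bar T\mathbf{x}_i-\tfrac1d+\tfrac{\sqrt{d\bar V-1}}{d\sqrt{d-1}}$, so your derivation actually yields a plus sign where \eqref{cor:moma2:eq} prints a minus; the discrepancy is a sign typo in the stated formula rather than an error in your argument, but you should not assert that the computation ``produces the stated equation'' verbatim.
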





\section{Goodness of fit test based on the maximum entropy principle}
\label{Sec:GFtest}
In this section we provide the statistical test for verification that a random sample follows a generalized von Mises-Fisher distribution. The methodology for all three introduced distributions is very similar. For simplicity, we provide detailed explanation for the Type-II distribution.

\subsection{Type II}
Denote by $\mathrm{GvMF}_{2,d}$ the class of generalized von Mises-Fisher distributions $\mathrm{GvMF}_{2,d}(\alpha,\kappa,\bmu),$ $\alpha>0,$ $\kappa> 0$ and $\bmu\in \Sp^{d-1}.$
Let  $\mathcal{X}_N=\{\mathbf{x}_1,\ldots,\mathbf{x}_N\}$  be a random sample of vectors on a sphere $\Sp^{d-1}$ and $\mathbf{x}_j\stackrel{d}{=}\mathbf{X},j=1,\ldots,N$ with unknown distribution. 

Let $\mathbf{Z}\sim \mathrm{GvMF}_{2,d}(\alpha,\kappa,\bmu).$ From \eqref{def:A2}, \eqref{prop2:eq1},  and Theorem \ref{thm2} we know that $H(\mathbf{Z})\geq H(\mathbf{X})$ for all continuous random vectors $\mathbf{X}\in \Sp^{d-1}$ with $\E\|\mathbf{X}-\bmu\|^{2\alpha}=\E\|\mathbf{Z}-\bmu\|^{2\alpha}=2^\alpha\frac{A_{2,d}(\kappa,\alpha,\alpha)}{A_{2,d}(\kappa,\alpha,0)}.$ Using Theorem \ref{thm:entropy1}, we get
\begin{equation}
\label{test:eq1}
    \inf_{\substack{\alpha,\kappa>0,\\\bmu\in \Sp^{d-1}}}\left\{\left.-\log c_{2,d}(\kappa,\alpha) +\frac{\kappa }{\alpha}\frac{A_{2,d}(\kappa,\alpha,\alpha)}{A_{2,d}(\kappa,\alpha,0)}\right| \,
     \E\|\mathbf{X}-\bmu\|^{2\alpha}=2^\alpha\frac{A_{2,d}(\kappa,\alpha,\alpha)}{A_{2,d}(\kappa,\alpha,0)}\right\}\geq H(\mathbf{X}).
\end{equation}

Moreover, equality in \eqref{test:eq1} appears if and only if $\mathbf{X}$ belongs to some distribution from the family $\mathrm{GvMF}_{2,d}.$ 
We substitute now the unobservable value of $\E\|\mathbf{X}-\bmu\|^{2\alpha}$ by its statistical counterpart $\frac{1}{N}\sum_{i=1}^N\|\mathbf{x}_i-\bmu\|^{2\alpha}$ and define the statistics
\begin{equation*}
S_2(\mathcal{X}_N)= \inf_{\substack{\alpha,\kappa>0,\\\bmu\in \Sp^{d-1}}}\left\{\left.-\log c_{2,d}(\kappa,\alpha) +\frac{\kappa }{\alpha}\frac{A_{2,d}(\kappa,\alpha,\alpha)}{A_{2,d}(\kappa,\alpha,0)}\right| \,
     \frac{\sum_{i=1}^N\|\mathbf{x}_i-\bmu\|^{2\alpha}}{N 2^\alpha}=\frac{A_{2,d}(\kappa,\alpha,\alpha)}{A_{2,d}(\kappa,\alpha,0)}\right\}.
\end{equation*}
Consider the value under $\inf$ in $S_2(\mathcal{X}_N)$. Under the condition $ \frac{1}{N 2^\alpha}\sum_{i=1}^N\|\mathbf{x}_i-\bmu\|^{2\alpha}=\frac{A_{2,d}(\kappa,\alpha,\alpha)}{A_{2,d}(\kappa,\alpha,0)},$ we have 
$$-\log c_{2,d}(\kappa,\alpha) +\frac{\kappa }{\alpha}\frac{A_{2,d}(\kappa,\alpha,\alpha)}{A_{2,d}(\kappa,\alpha,0)}=-\left(\log c_{2,d}(\kappa,\alpha) -\frac{\kappa }{\alpha 2^{\alpha}}\frac{1}{N}\sum_{i=1}^N\|\mathbf{x}_i-\bmu\|^{2\alpha}\right)=-\frac{l_2(\mathcal{X}_N)}{N}.$$
Thus,  
$$S_2(\mathcal{X}_N)=-\frac{1}{N}\sup_{\substack{\alpha,\kappa>0,\\\bmu\in \Sp^{d-1}}}\left\{l(\mathcal{X}_N)\left| \,
     \frac{\sum_{i=1}^N\|\mathbf{x}_i-\bmu\|^{2\alpha}}{N 2^\alpha}=\frac{A_{2,d}(\kappa,\alpha,\alpha)}{A_{2,d}(\kappa,\alpha,0)}\right.\right\}.$$
Let us consider unconditional maximization of log-likelihood $l_2(\mathcal{X}_N)$. Partial derivative with respect to $\kappa$ equals
\begin{align*}
    \frac{\partial l_2(\mathcal{X}_N)}{\partial \kappa}&=\frac{\partial }{\partial \kappa}\left(\log\frac{\Gamma\left(\frac{d-1}{2}\right)}{2 \pi^{\frac{d-1}{2}}}-\log A_{2,d}(\kappa,\alpha,0)-\frac{\kappa }{\alpha 2^{\alpha}}\frac{1}{N}\sum_{i=1}^N\|\mathbf{x}_i-\bmu\|^{2\alpha}\right)\\
    &=\frac{1}{\alpha}\frac{A_{2,d}(\kappa,\alpha,\alpha)}{A_{2,d}(\kappa,\alpha,0)}-\frac{1}{\alpha 2^{\alpha}N}\sum_{i=1}^N\|\mathbf{x}_i-\bmu\|^{2\alpha},
\end{align*}
where we used $\frac{\partial }{\partial \kappa}A_{2,d}(\kappa,\alpha,0)=-\frac{1}{\alpha}A_{2,d}(\kappa,\alpha,\alpha).$ Thus, the superemum in $S_2(\mathcal{X}_N)$ with respect to $\kappa$ coincides with the unconditional supremum of $l_2(\mathcal{X}_N)$ and 
\begin{equation}
    S_2(\mathcal{X}_N)=-\frac{1}{N}\sup_{\substack{\alpha,\kappa>0,\\\bmu\in \Sp^{d-1}}}l_2(\mathcal{X}_N).
\end{equation}

Let $\Theta_0$ be a compact subset of $\R^2_+$ large enough to contain all values of parameters $(\alpha,\kappa)$ appearing in practice. Consider the following hypotheses  
\begin{itemize}
    \item  $H_{2,0}:$ $X\sim \mathrm{GvMF}_{2,d},$ for some $(\alpha,\kappa)\in \Theta_{0},$ 
    \item  $H_{2,1}:$ $X\not\sim \mathrm{GvMF}_{2,d}$ for all $(\alpha,\kappa)\in \Theta_{0}.$ 
\end{itemize}

Since $\Theta_0$ is compact, maximum likelihood estimators $\hat{\alpha}_{L},$ $\hat{\kappa}_{L}$ are consistent. We proved in Theorem \eqref{thm:L2} that the $k$th nearest neighbour estimator  $\widehat{H}_{N,k}$ of  $H(\mathbf{X})$  
is $L^2-$consistent for any $k\in \N.$ Thus, we test $H_{2,0}$ vs. $H_{2,1}$ with the statistic
\begin{align}
    \label{def:T2MLE}\hat{T}^{L}_{2,k}(\mathcal{X}_N):=-\log c_{2,d}(\hat{\kappa}_{L},\hat{\alpha}_{L}) +\frac{\hat{\kappa}_{L} }{\hat{\alpha}_{L}}\frac{A_{2,d}(\hat{\kappa}_{L},\hat{\alpha}_{L},\hat{\alpha}_{L})}{A_{2,d}(\hat{\kappa}_{L},\hat{\alpha}_{L},0)}-\widehat{H}_{N,k}
\end{align}
which tends in probability to 0, as $N\to \infty.$
We reject $H_{2,0}$ with level of significance $\beta$ if $|\hat{T}^{L}_{2,k}(\mathcal{X}_N)|\geq x_\beta,$ where  $x_\beta$ is a critical value determined by 
$  \pr_{H_0}(|\hat{T}^{L}_{2,k}(\mathcal{X}_N)|\geq x_\beta)\leq \beta.$

\begin{remark}
It is easy to see that maximum likelihood estimates of $\alpha,\kappa,$ estimator $\hat{H}_{N,k}$ and the statistics $\hat{T}^{L}_{2,k}$  are rotational invariant.

Actually, we can replace the maximum likelihood estimates of $\alpha,\kappa$ in \eqref{def:T2MLE} by any consistent estimates $\hat{\alpha},\hat{\kappa}.$ Indeed,
 if $\mathbf{x}_1\sim \mathrm{GvMF}_{2,d}(\alpha,\kappa,\bmu)$ under hypothesis $H_0,$ then  $$2^{\hat{\alpha}}\frac{A_{2,d}(\hat{\kappa},\hat{\alpha},\hat{\alpha})}{A_{2,d}(\hat{\kappa},\hat{\alpha},0)}\xrightarrow[N\to\infty]{P}2^{\alpha}\frac{A_{2,d}(\kappa,\alpha,\alpha)}{A_{2,d}(\kappa,\alpha,0)}=\E\|\mathbf{x}_1-\bmu\|^{\alpha}$$  and $$\hat{T}_{2,k}(\mathcal{X}_N):=-\log c_{2,d}(\hat{\kappa},\hat{\alpha}) +\frac{\hat{\kappa} }{\hat{\alpha}}\frac{A_{2,d}(\hat{\kappa},\hat{\alpha},\hat{\alpha})}{A_{2,d}(\hat{\kappa},\hat{\alpha},0)}-\widehat{H}_{N,k}\xrightarrow[\text{under }H_0]{P} H(\mathbf{x}_1)-H(\mathbf{x}_1)=0$$ as $N\to\infty.$

\end{remark}

The critical values $x_\beta$ can be found by Monte Carlo simulations of test statistics $\hat{T}^{L}_{2,N}$ or $\hat{T}_{2,N}.$

\subsection{Type I and axial data}
The goodness of fit test for the axial generilized von-Mises distribution and the distribution of the I-type  are constructed similarly to II-type distributions. 
Let $\Theta_0$ be a compact subset of $\R^2_+$ large enough to contain all values parameters $(\alpha,\kappa)$ appearing in practice. Let $j=1,3$ and  consider the following hypotheses  
\begin{itemize}
    \item  $H_{j,0}:$ $X\sim \mathrm{GvMF}_{j,d},$ for some $(\alpha,\kappa)\in \Theta_{0},$ 
    \item  $H_{j,1}:$ $X\not\sim \mathrm{GvMF}_{j,d}$ for all $(\alpha,\kappa)\in \Theta_{0}.$ 
\end{itemize}
For testing $H_{1,0}$ vs. $H_{1,1}$ we use the statistic $\hat{T}_{1,N}$ given by
\begin{align}
    \label{def:T1}\hat{T}_{1,k}(\mathcal{X}_N):=-\log c_{1,d}(\hat{\kappa},\hat{\alpha}) -\frac{\hat{\kappa} }{\hat{\alpha}}\frac{A_{1,d}(\hat{\kappa},\hat{\alpha},\hat{\alpha})-A_{1,d}(-\hat{\kappa},\hat{\alpha},\hat{\alpha})}{A_{1,d}(\hat{\kappa},\hat{\alpha},0)+A_{1,d}(-\hat{\kappa},\hat{\alpha},0)}-\widehat{H}_{N,k},
\end{align}
where $\hat{\alpha},\hat{\kappa}$ are some consistent estimates of $\alpha,\kappa.$

For the axial distribution, we test $H_{3,0}$ vs. $H_{3,1}$ by the test statistic $\hat{T}_{3,N}$ given by
\begin{align}
    \label{def:T3}\hat{T}_{3,k}(\mathcal{X}_N):=-\log c_{3,d}(\hat{\kappa},\hat{\alpha}) -\frac{\hat{\kappa} }{\hat{\alpha}}\frac{A_{1,d}(\hat{\kappa},\hat{\alpha},\hat{\alpha})}{A_{1,d}(\hat{\kappa},\hat{\alpha},0)}-\widehat{H}_{N,k},
\end{align}
where $\hat{\alpha},\hat{\kappa}$ are some consistent estimates of $\alpha,\kappa.$



\section{Numerical experiments}
\subsection{Simulation}
In this section, we provide the method for simulation of $\mathrm{GvMF}_{j,d}$ distributed random vectors and study the behaviour of the test statistic $\hat{T}_{j,k}$ on simulated samples $j=1,2,3.$

Let $X_j\sim \mathrm{GvMF}_{j,d}(\alpha, \kappa,\bmu),j=1,2,3.$  Due to the tangent-normal decomposition 
$$\mathbf{X}_j=(\bmu^{\T}\mathbf{X}_j)\bmu + \sqrt{1-(\bmu^{\T}\mathbf{X}_j)^2}\mathbf{Y_j},$$
where $\mathbf{Y_j},j=1,2,3$ are orthogonal to $\bmu$ and uniformly distributed on $\Sp^{d-2}.$ So, in order to simulate $\mathbf{X}_j$ we can easily simulate random vectors  $\mathbf{Y_j}$ and independent random variables $\bmu^{\T}\mathbf{X}_j.$

Let us find the distributions of $\bmu^{\T}\mathbf{X}_j$ $j=1,2,3.$
\begin{lemma} The random variables $\bmu^{\T}\mathbf{X}_1,\bmu^{\T}\mathbf{X}_2,$ and $\bmu^{\T}\mathbf{X}_3$ have probability densities $f_1,f_2,$ and $f_2$ respectively, given by 
\begin{align}
\label{f1}f_1(y)&=\frac{2 \pi^{\frac{d-1}{2}}c_{1,d}(\kappa,\alpha)}{\Gamma\left(\frac{d-1}{2}\right)}\exp\left(\frac{\kappa}{\alpha} y^{<\alpha>}\right)(1-y^2)^{\frac{d-3}{2}}, y\in[-1,1],\\
\label{f2} f_2(y)&=\frac{2 \pi^{\frac{d-1}{2}}c_{2,d}(\kappa,\alpha)}{\Gamma\left(\frac{d-1}{2}\right)}\exp\left(-\frac{\kappa}{\alpha}(1-y)^{\alpha}\right)(1-y^2)^{\frac{d-3}{2}}, y\in[-1,1],\\
\label{f3} f_3(y)&=\frac{2 \pi^{\frac{d-1}{2}}c_{3,d}(\kappa,\alpha)}{\Gamma\left(\frac{d-1}{2}\right)}\exp\left(\frac{\kappa}{\alpha} |y|^\alpha \right)(1-y^2)^{\frac{d-3}{2}}, y\in[-1,1].
\end{align}

\end{lemma}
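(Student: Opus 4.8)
The plan is to derive the one-dimensional density of the projection $\bmu^\T \mathbf{X}_j$ directly from the tangent-normal decomposition together with Lemma \ref{lmm1}, treating all three cases by the same mechanism. For a rotationally symmetric density $f(\mathbf{x}) = g(\bmu^\T\mathbf{x})$, the key observation is that the marginal law of the scalar $t = \bmu^\T\mathbf{X}$ is obtained by integrating $f$ over the $(d-2)$-sphere of radius $\sqrt{1-t^2}$ that sits orthogonal to $\bmu$ at height $t$. First I would write, for any Borel test function $h:[-1,1]\to\R$,
\begin{equation*}
\E\, h(\bmu^\T\mathbf{X}_j) = \int_{\Sp^{d-1}} h(\bmu^\T\mathbf{x}) f_{\mathbf{X}_j}(\mathbf{x})\,\sigma(d\mathbf{x}),
\end{equation*}
and then apply the coarea-type identity \eqref{lmm1:eq2} with the non-negative Borel function $y\mapsto h(y)\,g_j(y)$, where $g_j$ is the radial profile of the corresponding density. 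Since $f_{\mathbf{X}_j}(\mathbf{x})$ depends on $\mathbf{x}$ only through $\bmu^\T\mathbf{x}$ (for Type I this is $(\bmu^\T\mathbf{x})^{<\alpha>}$, for Type II via $\|\mathbf{x}-\bmu\|^{2\alpha} = 2^\alpha(1-\bmu^\T\mathbf{x})^\alpha$, and for the axial type via $|\bmu^\T\mathbf{x}|^\alpha$), the integrand in \eqref{lmm1:eq2} factors cleanly.

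Carrying this out, \eqref{lmm1:eq2} transforms the surface integral into
\begin{equation*}
\E\, h(\bmu^\T\mathbf{X}_j) = \frac{2\pi^{\frac{d-1}{2}}}{\Gamma\!\left(\frac{d-1}{2}\right)}\int_{-1}^{1} h(y)\, g_j(y)\,(1-y^2)^{\frac{d-3}{2}}\,dy,
\end{equation*}
where $g_1(y) = c_{1,d}(\kappa,\alpha)\exp\!\left(\frac{\kappa}{\alpha}y^{<\alpha>}\right)$, $g_2(y) = c_{2,d}(\kappa,\alpha)\exp\!\left(-\frac{\kappa}{\alpha}(1-y)^{\alpha}\right)$, and $g_3(y) = c_{3,d}(\kappa,\alpha)\exp\!\left(\frac{\kappa}{\alpha}|y|^{\alpha}\right)$. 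Because this holds for arbitrary $h$, the integrand against $h(y)\,dy$ is exactly the density of $\bmu^\T\mathbf{X}_j$, which yields \eqref{f1}, \eqref{f2}, and \eqref{f3} after reading off the constant $\frac{2\pi^{(d-1)/2}}{\Gamma((d-1)/2)}c_{j,d}(\kappa,\alpha)$. For Type II the only extra bookkeeping is the algebraic identity $\|\mathbf{x}-\bmu\|^{2\alpha} = 2^\alpha(1-\bmu^\T\mathbf{x})^\alpha$ on $\Sp^{d-1}$, already recorded in the Remark following the definitions, so that the exponent becomes $-\frac{\kappa}{\alpha}(1-y)^\alpha$ as claimed.

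I do not expect a genuine obstacle here, since the statement is essentially a disintegration of the rotationally symmetric density along the $\bmu$-axis and Lemma \ref{lmm1} does all the geometric work. The one point requiring a word of care is normalization: I would verify that $g_j$ integrates to one against $(1-y^2)^{(d-3)/2}\,dy$ with the displayed prefactor, which is precisely the content of the constants $c_{j,d}(\kappa,\alpha)$ computed in the Remark (taking $h\equiv 1$ recovers $\int_{-1}^1 f_j(y)\,dy = 1$). A second minor check is that for the axial Type III the decomposition is applied to $|\bmu^\T\mathbf{x}|$ rather than a signed quantity, but since the radial profile $g_3$ is an even function the formula \eqref{f3} follows from the same computation without modification. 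Hence the result reduces to a single invocation of \eqref{lmm1:eq2} per case, plus the elementary identity for the squared norm in the Type II case.
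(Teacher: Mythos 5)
Your proposal is correct and follows essentially the same route as the paper: both reduce the computation to a single application of the coarea identity \eqref{lmm1:eq2} for rotationally symmetric densities, the paper using the test function $\mathbbm{1}\{y\le u\}$ to produce the distribution function directly while you use a general nonnegative Borel $h$, which is an immaterial difference. The Type II reduction via $\|\mathbf{x}-\bmu\|^{2\alpha}=2^{\alpha}(1-\bmu^\T\mathbf{x})^{\alpha}$ is exactly the bookkeeping the paper relies on as well.
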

\begin{proof}
Consider $\bmu^{\T}\mathbf{X}_1.$ It follows from  Lemma \ref{lmm1} that
\begin{align*}\pr(\bmu^{\T}\mathbf{X}_1\leq u)&=c_{1,d}(\kappa,\alpha)\int_{\|\mathbf{x}\|=1}\mathbbm{1}\{\bmu^{\T}\mathbf{x}\leq u\}\exp\left(\frac{\kappa}{\alpha} (\bmu^{\T}\mathbf{x})^{<\alpha>}\right) \sigma(d \mathbf{x})\\
=&c_{1,d}(\kappa,\alpha)\frac{2 \pi^{\frac{d-1}{2}}}{\Gamma\left(\frac{d-1}{2}\right)}\int_{-1}^1\mathbbm{1}\{y\leq u\}\exp\left(\frac{\kappa}{\alpha} (y)^{<\alpha>} \right)(1-y^2)^{\frac{d-3}{2}}d y=\int_{-1}^u f_1(y)dy.
\end{align*}
The cases of $\bmu^{\T}\mathbf{X}_2$ and $\bmu^{\T}\mathbf{X}_3$ are similar.
\end{proof}

Applying described procedure of simulation we obtain several samples of generalized von Mises-Fisher distributions with 1000 entries on 2-dimensional sphere. 
For all samples we fix mean direction $\bmu=(0,\sqrt{2}/2,\sqrt{2}/2).$ For different values of $\alpha$ and $\kappa$ we present locations of samples entries on a unit sphere: for Type I, see Figure \ref{fig:S:type1_a_05} (with $\alpha=0.5$) and Figure \ref{fig:type1_a_15_sphere} (with $\alpha=1.5$); for Type 2, Figures  \ref{fig:type2_a_05_sphere} and \ref{fig:type2_a_15_sphere} with $\alpha=0.5$ and $\alpha=1.5,$ respectively, and the samples of axial data are presented in Figures  \ref{fig:type3_a_05_sphere} (with $\alpha=0.5$) and \ref{fig:type3_a_15_sphere} (with $\alpha=1.5$).
The corresponding histograms and probability densities of random variables $\bmu^{\T}\mathbf{X}_i,i=1,2,3$ can be found in Figures \ref{fig:type1_a_05} ($\alpha=0.5$) and \ref{fig:type1_a_15} ($\alpha=1.5$) for Type I, in Figures \ref{fig:type2_a_05} ($\alpha=0.5$) and \ref{fig:type2_a_15} ($\alpha=1.5$) for Type II, and in Figures \ref{fig:type3_a_05} ($\alpha=0.5$) and \ref{fig:type3_a_15} ($\alpha=1.5$) for axial data.

One can observe that larger values of parameter $\kappa$ corresponds to more concentrated samples along direction $\bmu.$

\subsection{Parameter estimation}
We provide here a short computational study of the parameter estimation methods from sections \ref{MLE} and \ref{MME}. 
For each type of distributions $\mathrm{GvMF_{1,3}},\mathrm{GvMF_{2,3}},\mathrm{GvMF_{3,3}}$ we simulate 1000 samples with $N=1000$ entries each for several values of $\alpha\in\{0.5,1,1.5,2,2.5,3\}$ and $\kappa\in \{0.1,0.5,1,1.5,2,2.5,3,4,5,6,7\}.$ For each sample we compute maximum likelihood estimates  $\hat{\bmu}_{L},\hat{\alpha}_{L},\hat{\kappa}_{L}$ and moment estimates $\hat{\bmu}_{M},\hat{\alpha}_{M},\hat{\kappa}_{M}.$ We present the sample mean square errors of $\hat{\alpha}_{L}$ and $\hat{\alpha}_{M}$ in Tables \ref{MLET1n1000csm} (type I), \ref{alpha_hat_Type2} (type II), and \ref{alpha_type3} (axial type). The mean square errors of $\hat{\kappa}_{L}$ and $\hat{\kappa}_{M}$ can be found in Tables \ref{MLET2n1000csm} (Type I), \ref{kappa_hat_Type2} (Type II), and \ref{kappa_type3} (axial data).

We group error values of $\hat{\kappa}_{L},$ $\hat{\kappa}_{M}$ and  $\hat{\alpha}_{L},$ $\hat{\alpha}_{M}$ in order to decide which method is more appropriate for parameter estimation.  The method of moments is preferable for Types I and II, if the computing power plays a decisive role. For axial data this method has no such advantages because we need to operate with an orientation tensor. 

The experiments shows that the speed of convergence $\hat{\alpha} \to \alpha$ and $\hat{\kappa} \to \kappa$  depend on the values of $\alpha,$ and $\kappa$ and $\hat{\kappa} \to \kappa$  faster than $\hat{\alpha} \to \alpha.$ We can also conclude that the errors of maximum likelihood estimators are generally less than moment estimators. Comparing the errors by the distribution type, we observe that samples of Type II very often carry the smallest error.

\subsection{Entropy estimation}
In this section, we apply the $k$th nearest neighbour estimator \eqref{eq_of_H_N} to the simulated set of samples. We compute estimates $\widehat{H}_{N,k}(\mathcal{X}_N)$ and their sample variances $\mathrm{sVar}(\widehat{H}_{N,k}(\alpha,\kappa))$ for $k=1,2,3,4,5,$  $\kappa\in \{0.1,0.5,1,1.5,2,2.5,3,4,5,6,7\},$ and $\alpha\in\{0.5,1,1.5,2,2.5,3\}.$ 
In Figure \ref{fig:HistH}, we illustrate the distribution of $\widehat{H}_{N,k}(\mathcal{X}_N)$ with $k=3$ by histograms for samples simulated from distributions $\mathrm{GvMF}_{j,3}(1.5,2,\cdot),j=1,2,3.$ 
\begin{figure}[h]
     \centering
     \begin{subfigure}[b]{0.32\textwidth}
         \centering
         \includegraphics[width=\textwidth]{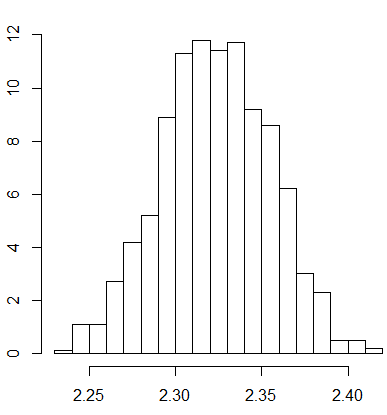}
         \caption{Type I}
     \end{subfigure}
     \hfill
     \begin{subfigure}[b]{0.32\textwidth}
         \centering
         \includegraphics[width=\textwidth]{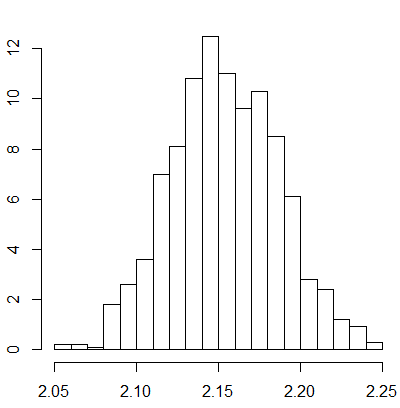}
         \caption{Type II}
     \end{subfigure}
     \hfill\begin{subfigure}[b]{0.32\textwidth}
         \centering
         \includegraphics[width=\textwidth]{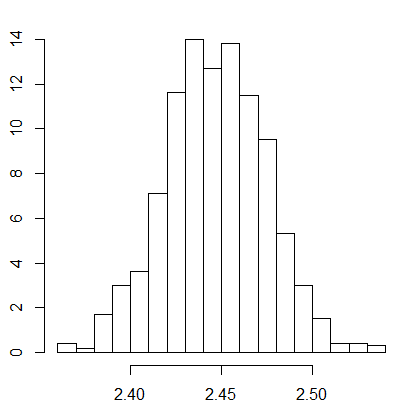}
         \caption{Axial type}
     \end{subfigure}
     \caption{Histograms of $\hat{H}_{N,3}(\mathcal{X}_N),$ $\mathcal{X}_N\sim \mathrm{GvMF}_{j,3}(\alpha,\kappa,\cdot)$ with $\alpha=1.5$ and $\kappa=2.$}
    \label{fig:HistH}
\end{figure}

In order to choose the right value of $k$ we compare the sample variances $\mathrm{sVar}(\widehat{H}_{N,k}(\alpha,\kappa))$ for $k=1,2,3,4,5.$ The minimum and maximum values of $\mathrm{sVar}(\widehat{H}_{N,k}(\alpha,\kappa))$ with respect to $\alpha$ and $\kappa$ are presented in Table \ref{varE}. Our results confirm the conclusion in \citep{berrett2019}, that is, the  asymptotic variance of $\widehat{H}_{N,k}$ decreases rapidly up to $k=3.$  One can observe that $k-$nearest neighbour estimates  depend on values $\alpha$ and $\kappa.$ Although,  the sample variances are quite small for all examined values of $\alpha$ and $\kappa$ and sample size $N=1000.$

\begin{table}[h]
\footnotesize
    \centering
    \begin{tabular}{lrccccc}
Distribution    & & $k=1$ &$k=2$ &$k=3$ &$k=4$   &$k=5$  \\ 
        \hline
$\mathrm{GvMF}_{1,3}(\alpha,\kappa)$ &$\min_{\alpha, \kappa}$(sVar) &0.00214	&0.00092	&0.00058	&0.00042 &0.00034\\
&$\max_{\alpha, \kappa}$(sVar)   & 0.00388 &	0.00239	& 0.00208 &	0.00185	& 0.00152
\\
\hline
$\mathrm{GvMF}_{2,3}(\alpha,\kappa)$&$\min_{\alpha, \kappa}$(sVar) &0.00212 &	0.00093	& 0.00059	& 0.00043 &	0.00034\\
&$\max_{\alpha, \kappa}$(sVar)   & 0.00404	&0.00304	&0.00275	&0.00152	&0.00257
\\
\hline
$\mathrm{GvMF}_{3,3}(\alpha,\kappa)$&$\min_{\alpha, \kappa}$(sVar) &0.00212	& 0.00093	& 0.00059	& 0.00043	&0.00034\\
&$\max_{\alpha, \kappa}$(sVar)  & 0.00334 & 0.00207	& 0.00170	& 0.00152 &	0.00144\\
\hline
    \end{tabular}
            \caption{Sample variance sVar$(\widehat{H}_{N,k}(\alpha,\kappa))$ for distribution of type I}
    \label{varE}
\end{table}
Thus, we choose $k=3$ for computations in the next sections.



\subsection{Test statistic}
In this section, we present our study of the goodness of fit tests from Section \ref{Sec:GFtest} and their test statistics $\hat{T}_{j,k}(\mathcal{X}_N),j=1,2,3$ from \eqref{def:T2MLE}, \eqref{def:T1} and \eqref{def:T3}  with $k=3$ and $N=1000.$ We compute $\hat{T}^{L}_{j,k}(\mathcal{X}_N)$ and $\hat{T}_{j,k}^{M}(\mathcal{X}_N)$  separately with maximum likelihood estimates and estimates be the method of moments of parameters $\alpha,\kappa,\bmu,$ respectively.

For comparison of different types of estimates, we look on the sample variances $\mathrm{sVar}(\hat{T}^{M}_{j,3}(\alpha,\kappa))$ and $\mathrm{sVar}(\hat{T}^{L}_{j,3}(\alpha,\kappa))$ of $\hat{T}^{M}_{j,3}(\mathcal{X}_N)$ and $\hat{T}_{j,3}^{L}(\mathcal{X}_N),$ respectively, for all combinations of parameters $\kappa\in \{0.1,0.5,1,1.5,2,2.5,3,4,5,6,7\}$ and $\alpha\in\{0.5,1,1.5,2,2.5,3\}.$ The minimum and maximum values of $\mathrm{sVar}(\hat{T}^{M}_{j,3}(\alpha,\kappa))$ and $\mathrm{sVar}(\hat{T}^{L}_{j,3}(\alpha,\kappa))$ are presented in Table \ref{varT}. Numbers in this tables demonstrate significant benefits  of the maximum likelihood method over the method of moments for distributions of I and II types. For axial data, one can also prefer $\hat{T}^{L}_{j,k}.$ Additionally, one can observe from Table \ref{varT} and tables with errors of estimates $\hat{\kappa}^{L},\hat{\kappa}^{M},\hat{\alpha}^{L},\hat{\alpha}^{M}$ that the statistics $\hat{T}^{M}_{j,k}$ and  $\hat{T}^{L}_{j,k}$ are much more accurate than estimators of parameters and they have small variances even for small $\alpha,\kappa$ in contrast to $\hat{\alpha},\hat{\kappa},$ whose deviations are large. 
\begin{table}[]
\footnotesize
    \centering

    \begin{tabular}{l|ccc|ccc|}
    Type of estimates&\multicolumn{3}{c|}{Moments} & \multicolumn{3}{c}{Maximum likelihood}\\
    \hline
    Distribution& $\mathrm{GvMF}_{1,3}$ &$\mathrm{GvMF}_{2,3}$ &$\mathrm{GvMF}_{3,3}$ & $\mathrm{GvMF}_{1,3}$ & $\mathrm{GvMF}_{2,3}$ &$\mathrm{GvMF}_{3,3}$  \\ 
        \hline 
$\min_{\alpha, \kappa}$(sVar) &0.000563	&0.000538&	0.000573	&	0.000558 &	0.000535&	0.000544\\
$\max_{\alpha, \kappa}$(sVar)   & 0.001014 &	0.001558 &	0.000734 &		0.000665	& 0.000688	& 0.000667
\\
\hline
    \end{tabular}
            \caption{Sample variances sVar$(\hat{T}_{j,3}(\alpha,\kappa))$ }
    \label{varT}
\end{table}

In Section \ref{Sec:GFtest}, we choose the two-sided test with rejection criteria $|\hat{T}^{L}_{j,k}|>x_{\beta,j}.$ We confirm this choice by  histograms $\hat{T}^{L}_{j,3}$ with $\alpha=1.5$ and $\kappa=2,$ see Figure \ref{fig:HistT}. 

\begin{figure}[h]
     \centering
     \begin{subfigure}[b]{0.32\textwidth}
         \centering
         \includegraphics[width=\textwidth]{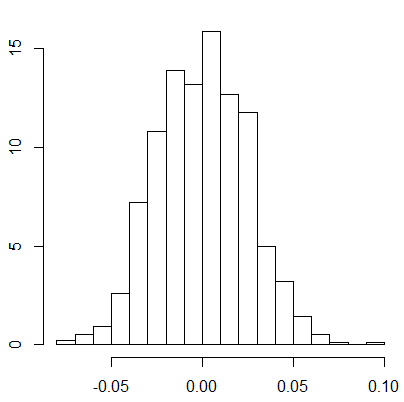}
         \caption{$\mathrm{GvMF}_{1,3}$, sVar=0.0006019}
     \end{subfigure}
     \hfill
     \begin{subfigure}[b]{0.32\textwidth}
         \centering
         \includegraphics[width=\textwidth]{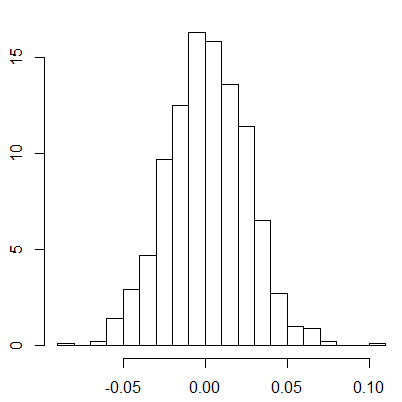}
         \caption{$\mathrm{GvMF}_{2,3}$, sVar=0.0005924}
     \end{subfigure}
     \hfill\begin{subfigure}[b]{0.32\textwidth}
         \centering
         \includegraphics[width=\textwidth]{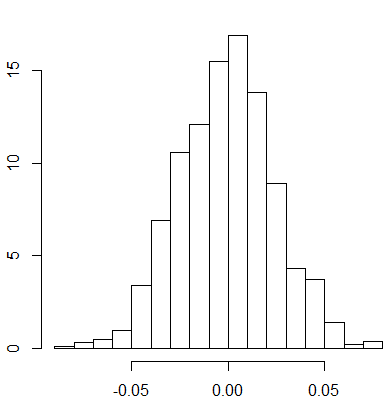}
         \caption{$\mathrm{GvMF}_{3,3}$, sVar=0.0006118}
     \end{subfigure}
     \caption{Histograms of $\hat{T}^{L}_{j,3}(\mathcal{X}_N),\mathcal{X}_N\sim \mathrm{GvMF}_{j,3}(\alpha,\kappa,\cdot)$ with $\alpha=1.5$ and $\kappa=2.$}
           \label{fig:HistT}
\end{figure}

We see that the statistics $\hat{T}^{L}_{j,3}$ have approximately symmetric distribution with mode at 0. Therefore, we put rejection region as $(-\infty,-x_\beta)\cup[x_\beta,+\infty),$ where critical values $x_{\beta}$ are obtained as a samples quantiles $\pr_{H_0}(|\hat{T}_{i,3}|>x_{\beta,j})\leq \beta,j=1,2,3$ The corresponding values of $x_{\beta,j}$ with significance level $\beta=0.05$ are presented in Table \ref{tab:T1TstatMLEn1000:5} for $\hat{T}^{L}_{1,3}$, in Table \ref{tab:T2TstatMLEn1000:5} for $\hat{T}^{L}_{2,3}$, and in Table \ref{tab:T3TstatMLEn1000:5} for $\hat{T}^{L}_{3,3}.$
\begin{table}[htb!]
  
\begin{tabular}{cccccccccccc}
 & {0.1}&{0.5}&{1} &{1.5}&{2}&{2.5}& {3}&{4}&{5}&{6}&{7} \\
0.5& 4.884& 4.626& 5.128& 5.281& 5.069& 4.960& 4.726& 4.999& 5.132& 5.360& 5.301 \\
1  & 4.727& 4.792& 4.879& 4.736& 4.757& 4.843& 4.920& 5.217& 4.951& 5.278& 5.091 \\
1.5& 4.935& 4.914& 4.657& 4.731& 4.745& 4.879& 4.849& 4.995& 5.152& 4.990& 5.009 \\
2  & 4.916& 4.873& 4.839& 4.982& 4.801& 4.987& 4.752& 4.934& 4.829& 5.076& 5.010 \\
2.5& 4.916& 4.921& 5.276& 4.932& 4.854& 4.852& 4.711& 4.700& 4.860& 4.983& 5.373 \\
3  & 4.687& 4.821& 4.783& 4.626& 4.926& 4.704& 4.656& 4.690& 4.631& 4.706& 4.844 
\end{tabular}
\caption{Critical values $x_{\beta,1}$ for test statistic $\hat{T}^L_{1,3}$ and $\beta = 0.05,$ with respect to $\alpha$ (rows) and $\kappa$ (columns), multiplied by $10^2.$}
  \label{tab:T1TstatMLEn1000:5}
\end{table}

\begin{table}[htb!]

\begin{tabular}{cccccccccccc}
 & {0.1}&{0.5}&{1} &{1.5}&{2}&{2.5}& {3}&{4}&{5}&{6}&{7}\\
0.5 & 4.921 &  4.795 &  4.750 &  4.962 &  4.711 &  5.015 &  4.742 &  5.182 &  5.430 &  5.374 &  5.388 \\
1   &  4.718 &  4.688 &  4.858 &  4.937 &  4.996 &  4.829 &  4.870 &  5.065 &  5.234 &  5.287 &  5.449 \\
1.5 &  4.698 &  4.916 &  4.943 &  4.714 &  4.824 &  4.988 &  4.964 &  4.613 &  4.975 &  5.267 &  5.263 \\
2   &  4.920 &  4.989 &  4.852 &  4.611 &  4.890 &  4.869 &  5.147 &  5.037 &  4.941 &  5.423 &  5.217 \\
2.5 &  5.066 &  4.753 &  5.034 &  4.805 &  4.768 &  4.930 &  4.973 &  5.346 &  5.283 &  5.138 &  5.074 \\
3   &  4.713 &  4.417 &  4.731 &  4.907 &  5.001 &  5.007 &  4.870 &  5.023 &  5.015 &  5.083 &  5.116
\end{tabular}
\caption{Critical values $x_{\beta,2}$ for test statistic $\hat{T}^L_{2,3}$ and $\beta = 0.05,$ with respect to $\alpha$ (rows) and $\kappa$ (columns), multiplied by $10^2.$}
\label{tab:T2TstatMLEn1000:5}
\end{table}


\begin{table}[htb!]
\begin{tabular}{cccccccccccc}
 & {0.1}&{0.5}&{1} &{1.5}&{2}&{2.5}& {3}&{4}&{5}&{6}&{7}\\
0.5 &  4.932 &  5.095 &  4.843 &  4.863 &  5.040 &  4.995 &  5.000 &  5.477 &  5.810 &  5.219 &  5.527 \\
1   &  4.956 &  4.793 &  4.636 &  4.912 &  4.896 &  4.873 &  4.959 &  5.251 &  5.196 &  5.565 &  5.917 \\
1.5 &  4.620 &  4.873 &  4.781 &  4.892 &  4.839 &  5.006 &  4.806 &  4.824 &  5.016 &  5.239 &  5.370 \\
2   &  4.727 &  5.049 &  4.804 &  4.567 &  4.811 &  4.651 &  4.842 &  4.999 &  4.836 &  5.162 &  5.191 \\
2.5 &  4.925 &  4.799 &  4.938 &  4.831 &  4.735 &  4.834 &  5.024 &  4.739 &  4.917 &  4.899 &  4.703 \\
3   &  4.960 &  4.901 &  4.904 &  4.649 &  5.037 &  4.843 &  4.898 &  4.852 &  4.828 &  4.895 & 4.974
\end{tabular}
\caption{Critical values $x_{\beta,3}$ for test statistic $\hat{T}^L_{3,3}$ and $\beta = 0.05,$ with respect to $\alpha$ (rows) and $\kappa$ (columns), multiplied by $10^2.$}
  \label{tab:T3TstatMLEn1000:5} 
\end{table}

We provide also the study of the goodness of fit test's power for the samples from Fisher-Bingham distribution. We put $\bmu_1=(1,0,0)^{\T}$ and $\bmu_2=(0,\sqrt{2}/2,\sqrt{2}/2)^{\T}$ and consider the following series of hypotheses. 

For type I:
\begin{itemize}
    \item $H^1_0:$ $\mathbf{X}\sim \mathrm{GvMF}_{1,3},$
    \item $H^1_{1,j}:$ $\mathbf{X}$ has the Fisher-Bingham distribution with density $\propto\exp(3 \bmu_1^{\T}\mathbf{x}+0.35j(\bmu_2^{\T}\mathbf{x})^2),$ $\mathbf{x}\in\Sp^{2},$ $j=1,\ldots,20.$
\end{itemize}

For axial type:
\begin{itemize}
    \item $H^2_0:$ $\mathbf{X}\sim \mathrm{GvMF}_{3,3},$
    \item $H^2_{1,j}:$ $\mathbf{X}$ has the Fisher-Bingham distribution with density  $\propto\exp(0.05j (\bmu_1^{\T}\mathbf{x})+6(\bmu_2^{\T}\mathbf{x})^2),$ $\mathbf{x}\in\Sp^{2},$ $j=1,\ldots,20.$
\end{itemize}
For each $j=1,\ldots,20,$ we simulate 500 samples $\mathcal{X}^1_{j,N}$ under $H^1_{1,j}$ and  $\mathcal{X}^2_{j,N}$ under $H^2_{1,j}$  with sample size $N=1000.$ We use the simulation procedure from the $\mathrm{R}$ package `Directional' \citep{DirPack}. In order to simplify commutations, we reject $H^1_0$ and $H^2_0$ if  $|\hat{T}^{L}_{1,3}(\mathcal{X}^1_{j,N})|>x^{(1)}_\beta$ and  $|\hat{T}^{L}_{2,3}(\mathcal{X}^2_{j,N})|>x^{(2)}_\beta$ respectively, where critical values  $x^{(1)}_\beta=0.05373$ and $x^{(2)}_\beta=0.05917$ are taken as maximum of $x_\beta$ from Tables \ref{tab:T1TstatMLEn1000:5} and \ref{tab:T3TstatMLEn1000:5} for significance level $\beta=0.05.$

The ratios of rejections $H^1_0$ and $H^2_0$ are presented in Figure \ref{Pvalue}.
\begin{figure}[h]
     \centering
     \begin{subfigure}[b]{0.48\textwidth}
         \centering
         \includegraphics[width=\textwidth]{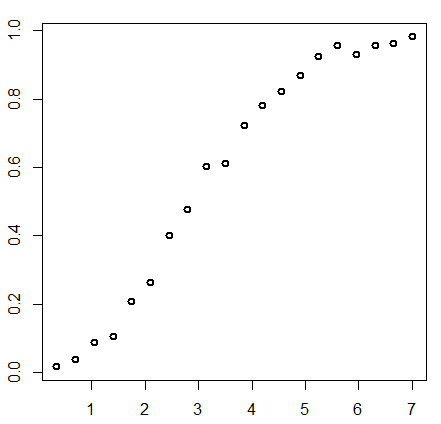}
         \caption{Type I, $0.35j$ on the $x-$axis. }
         \label{Pvalue:T1}
     \end{subfigure}
     \hfill
     \begin{subfigure}[b]{0.48\textwidth}
         \centering
         \includegraphics[width=\textwidth]{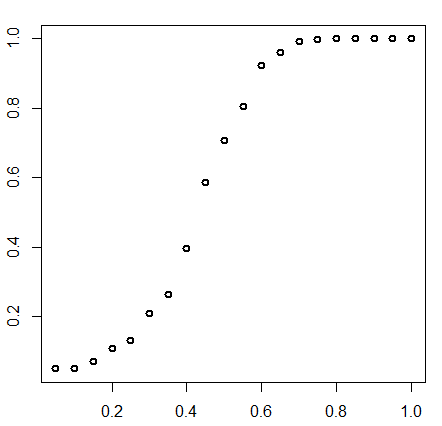}
         \caption{Axial type, $0.05j$ on the $x-$axis.}
         \label{Pvalue:T3}
     \end{subfigure}
     \caption{Powers of goodness of fit tests $H^1_0$ vs $H^1_{1,j}$ (left) and $H^2_0$ vs $H^2_{1,j}$ (right), $j=1,\ldots,20.$}
     \label{Pvalue}
\end{figure}

\section{Application to a real data set}
In this section, we apply the introduced goodness of fit test to the data set consists of fiber directions in a glass fibre reinforced composite material. The 3D-images  of a fibre composite obtained by
micro computed tomography and are provided by the Institute for Composite Materials (IVW) in Kaiserslautern, Germany, see Fig.~\ref{alphakappa} (left). The detailed description of the material can be found in \citep{IVWFibres} and it was the object of studies in \citep{Dresvyanskiy_2019} and \citep{dresvyanskiy2020detecting}, where  the regions of anomaly behaviour of the fibres were found.   The data set is provided by Prof. Claudia Redenbach (TU Kaiserslautern) and consists of local direction of fibres estimated by the tools of MAVI software \citep{MAVI}. Each data set entry ${Y}_{k},{k}=([1,97]\times[1,80]\times[1,64])\cap \N^3$ is the average of fibre local directions in small observation windows $\tilde{W}$ with $75\times75\times75$ voxels each. Note that some of such windows can be empty or they might contain not enough material for direction computation. 
We denote by $J_W$ the collection of indexes ${k}$ such that ${Y}_{k}$ is non-empty. In the considered data set $| J_W| =430741$ and its precise construction is in \citep{Dresvyanskiy_2019}. 

\begin{figure}
    \centering
   \begin{subfigure}[t]{0.45\textwidth}
    \includegraphics[width=0.8\linewidth]{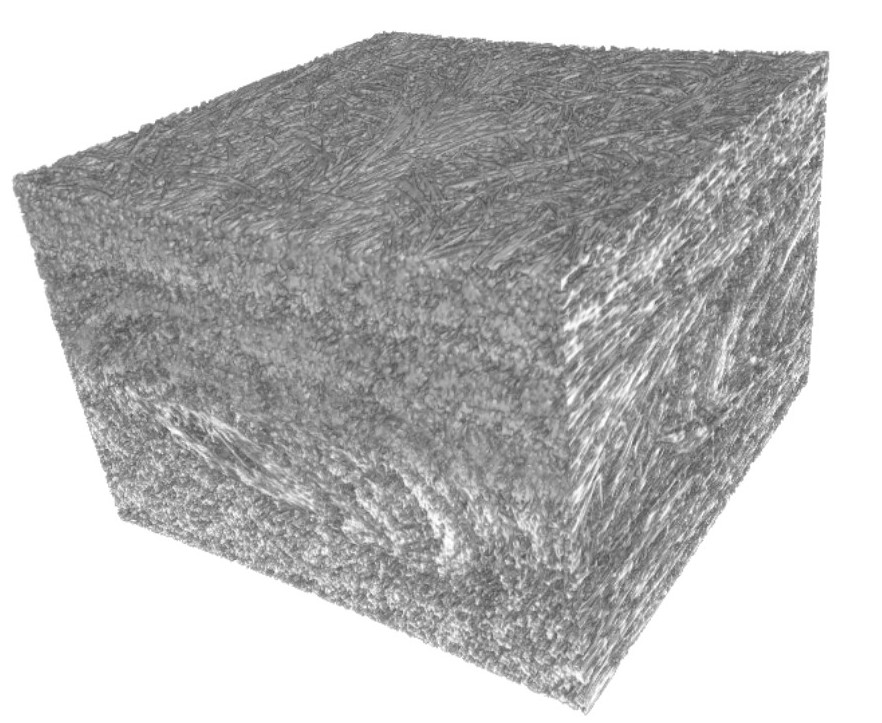}
    \caption{3D image of a glass fibre reinforced composite material.
970$\times$1469$\times$1217 voxels, spacing: 4$\mu$m.}
    \label{fig:materail}
    \end{subfigure}
    \begin{subfigure}[t]{0.45\textwidth}
    \includegraphics[width=\linewidth]{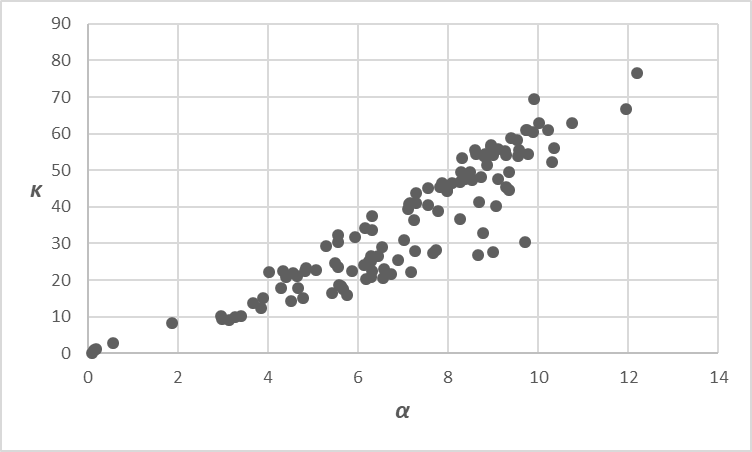}
    \caption{Maximum likelihood estimates of $\alpha$ and $\kappa$ for each subsample $\mathcal{X}_{\mathbf{l}}$ }
    \label{alphakappa}
    \end{subfigure}
    \caption{Testing on glass fibre reinforced composite material.}
\end{figure}

The estimating procedure of directions in MAVI software produces vectors on a unit sphere which are not necessarily symmetrically distributed. However, the fibres are not oriented, therefore we expect an axial distribution of their directions. We propose the symmetrization of original sample by 
$\mathcal{X}_N=\{\mathbf{X}_{\mathbf{k}}=\mathbf{Y}_{\mathbf{k}}\xi_\mathbf{k},\mathbf{k}\in J_W\},$ where $\xi_\mathbf{k},\mathbf{k}\in J_W$ are i.i.d random variables with $\pr(\xi_\mathbf{k}=+1)=\pr(\xi_\mathbf{k}=-1)=\frac{1}{2}.$
We separate the whole material into blocks $W_{\mathbf{l}},$ each of size $16\times15\times16,$ such that 
$J_\mathbf{l}=J_W\cap[l_1,l_1+16)\times[l_2,l_2+15)\times[l_3,l_2+16)$ and consider subsamples $\mathcal{X}_{\mathbf{l}}=\{\mathbf{X}_{\mathbf{k}},\mathbf{k}\in J_\mathbf{l}\}$ with simple sizes $2736\leq |\mathcal{X}_{\mathbf{l}}|\leq 3745.$ 
For each subsample $\mathcal{X}_{\mathbf{l}}$ we provide the introduced goodness of fit test for distributions $\mathrm{GvMF_{3,3}},$ i.e., we test
\begin{itemize}
    \item $H_{0,\mathbf{l}}:$ $\mathcal{X}_{\mathbf{l}}\sim \mathrm{GvMF_{3,3}}$
    \item $H_{1,\mathbf{l}}:$ $\mathcal{X}_{\mathbf{l}}\not\sim \mathrm{GvMF_{3,3}}.$
\end{itemize}

At first, we provide maximum likelihood estimation of parameters $\alpha$ and $\kappa$ (for the variety of their values $\hat{\alpha_{\mathbf{l}}}$ and $\hat{\kappa_{\mathbf{l}}}$ see Figure \ref{alphakappa}). Second, we need to simulate the samples of statistics $\hat{T}_{3,k}(\mathcal{X}_\mathbf{l})$ under hypotheses $\mathcal{X}_\mathbf{l}\sim \mathrm{GvMF}_{3,3}(\hat{\alpha_{\mathbf{l}}},\hat{\kappa_{\mathbf{l}}},\cdot)$ based on samples sizes $|\mathcal{X}_\mathbf{l}|.$ Unfortunately, our computational resources was limited and we have to group simulations with close values of $\hat{\alpha_{\mathbf{l}}}$ and $\hat{\kappa_{\mathbf{l}}}.$
One can observe that the majority of $\hat{\alpha_{\mathbf{l}}}$ belongs to the interval $[3,11]$ and the ratios $\frac{\hat{\kappa_{\mathbf{l}}}}{\hat{\alpha_{\mathbf{l}}}}$ are mostly in $[3,7].$ Therefore, we simulate 800 samples $\mathcal{Y}_N\sim \mathrm{GvMF}_{3,3}(\alpha,\kappa,\cdot)$ each of size $N=3500$ for all combinations of $\alpha\in \{4,6,8,10\}$ and $\frac{\kappa}{\alpha}\in \{4,6\}$ to obtain the corresponding empirical distributions of $\hat{T}_{3,3}(\mathcal{Y}_N).$

Then we compute statistics $\hat{T}_{3,3}(\mathcal{X}_\mathbf{l})$ for each $\mathbf{l}$ and their $p$-values. 
We obtain that our goodness of fit test rejects almost all hypotheses $H_{0,\mathbf{l}}$ with significance level $0.05,$ and detects 3 regions with directional distributions $\mathrm{GvMF_{3,3}}$ (see Table \ref{pvalue} for the samples $\mathcal{X}_{\mathbf{l}}$ with $p-$values greater than $0.01$). In order to illustrate how tight the fitted distributions are, we present for two blocks $W_{\mathbf{l}}$ with $\mathbf{l}=(49,61,1)$ and $\mathbf{l}=(49,61,1),$ the QQ-plots for samples $\{\hat{\bmu}_{\mathbf{l}}^{\T}\mathbf{X}_\mathbf{k}\}$ and distribution $f_3$ defined in \eqref{f3} with parameters $(\hat{\alpha_{\mathbf{l}}},\hat{\kappa_{\mathbf{l}}},\cdot),$ see Figure \ref{qq}.

\begin{table}[h]
\footnotesize
\centering
\begin{tabular}{llll|crrrrr}
$l_1$ & $l_2$   &  $l_3$ & $|\mathcal{X}_\mathbf{l}|$ & $\hat{\bmu}$& $\hat{\alpha}$& $\hat{\kappa}$& $\hat{H}_{N,3}$  & $\hat{T}_{3,3}(\mathcal{X}_\mathbf{l})$ & $p-$value     \\
\hline
49 & 46 & 1 & 3434 & (-0.0028,  0.9999,  -0.0135) & 8.80  & 53.90 & 0.4369 & 0.02344 & 0.0775\\
49 & 61 & 1 & 3222 & (-0.0149,  0.9998,  0.0091)  & 8.53  & 47.62 & 0.7132 & 0.01976 & 0.1234\\
49 & 16 & 17 & 3474 & (0.0175,   0.9977,  0.0657)  & 8.84  & 53.63 & 0.4690 & 0.02987 & 0.0263\\
49 & 61 & 17 & 3364 & (-0.0125,  0.9998,  -0.0152) & 10.22 & 60.91 & 0.4628 & 0.02946 & 0.0263\\
1  & 46 & 65 & 3319 & (-0.0297,  0.9986,  -0.0432) & 7.25  & 36.45 & 1.0455 & 0.02057 & 0.1275\\
\hline
\end{tabular}
\caption{Results of goodness of fit tests $H_{0,\mathbf{l}}$ vs. $H_{1,\mathbf{l}}$ for fiber directions in glass fibre reinforced composite material.}
\label{pvalue}
\end{table}

\begin{figure}[h]
    \centering
    \begin{subfigure}[b]{0.45\textwidth}
         \centering
         \includegraphics[width=\textwidth]{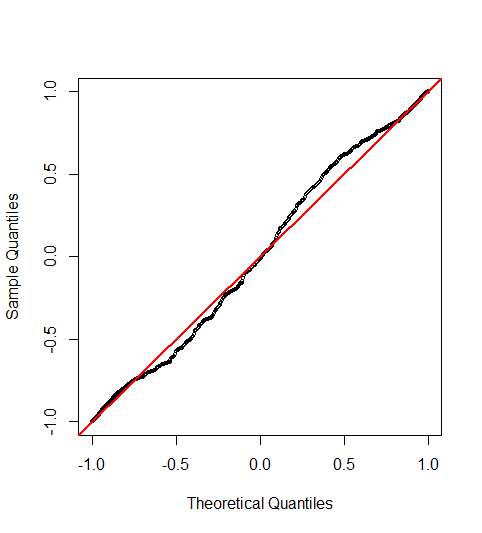}
         \caption{$\mathbf{l}=(49,61,1),\hat{\alpha_{\mathbf{l}}}=8.53,\hat{\kappa_{\mathbf{l}}}=47.62$}
     \end{subfigure}
     \begin{subfigure}[b]{0.45\textwidth}
         \centering
         \includegraphics[width=\textwidth]{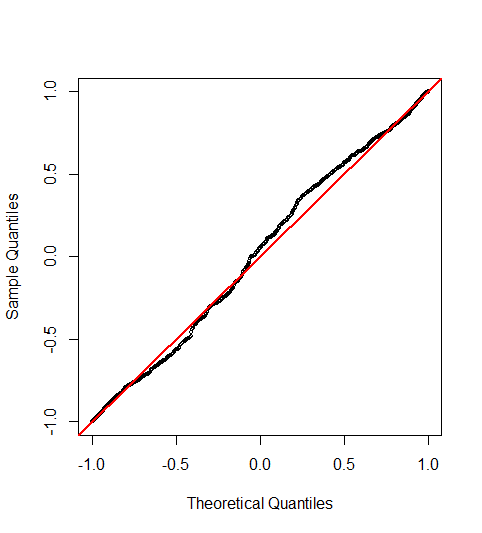}
         \caption{$\mathbf{l}=(1,46,65),\hat{\alpha_{\mathbf{l}}}=7.25,\hat{\kappa_{\mathbf{l}}}=36.45$}
     \end{subfigure}
    \caption{QQ plots for samples $\hat{\bmu}_{\mathbf{l}}^{\T}\mathbf{X}_\mathbf{l}$ and distributions with density $f_3$ \eqref{f3}.}
    \label{qq}
\end{figure}
\section*{Acknowledgement}
The research was partially supported by DFG Grant 390879134. The authors are grateful for Prof. Katja Schladitz for the help with the real data, Martin Gurka and Sebastian Nissle (Institut f\"{u}r Verbundwerkstoffe, Kaiserslautern) for permission to reuse the tomographic images, Prof. Claudia Redenbach for providing the data set of fiber directions.

\bibliographystyle{biometrika}
\bibliography{Lit} 

\newpage
\section*{Appendix}
Here we present tables of mean square errors of estimates of $\kappa$ and $\alpha$ and plots with realisations of the generalized von Mises-Fisher distributions


\begin{center}
\begin{table}[h]
\footnotesize
{
\begin{tabular}{cccccccccccc}
 & ${0.1}$ & ${0.5}$ & ${1}$  & ${1.5}$ & ${2}$ & ${2.5}$& ${3}$ & ${4}$ & ${5}$ & ${6}$ & ${7}$ \\[0.5ex] 
${0.5}$ & $2.03233 \atop 1.74912$ & $0.01131 \atop 0.01103$ & $0.00309 \atop 0.00294$ & $0.00229 \atop 0.00209$ & $0.00334\atop 0.00290$ & $0.00720 \atop 0.00600$ & $0.00234 \atop 0.01028$ & $0.00067 \atop 0.01568 $ & $ 0.00044 \atop 0.02000 $ & $0.00046 \atop 0.02222$ & $0.00040 \atop 0.02375$ 
\\[0.5ex] 
${1}$ &  $12.64360 \atop 15.83364$ & $0.27260 \atop 0.21929$ & $0.03760 \atop 0.03207$ & $0.01778 \atop 0.01518$ & $0.01126 \atop 0.00838$ & $0.00766 \atop 0.00569$ & $0.00633 \atop 0.00480$ & $0.00686 \atop 0.00508 $ & $0.00950 \atop 0.00644$ & $0.02210\atop 0.00850$ & $0.04043 \atop 0.01633$ 
\\[0.5ex] 
${1.5}$ &  $13.64093 \atop 24.85928$ & $2.89363 \atop 2.75421$ & $0.44954 \atop 0.20300$ & $0.12728 \atop 0.07216$ & $0.07166 \atop 0.03613$ & $0.04616 \atop 0.02355$ & $0.03330 \atop 0.01725$ & $0.02049 \atop 0.01081 $ & $0.01535 \atop 0.00904$ & $0.01465 \atop 0.00793$ & $0.01516 \atop 0.00854$ \\[0.5ex] 
${2}$ &  $12.95277 \atop 26.53262 $ & $8.34069 \atop 10.16847$ & $2.90354 \atop 1.30990$ & $0.89834 \atop 0.30051$ & $0.44383\atop 0.13914$ & $0.27728 \atop 0.08601$ & $0.16970 \atop 0.05726$ & $0.08593 \atop 0.03554 $ & $0.05445 \atop 0.02187$ & $0.04011 \atop 0.01650$ & $0.03321 \atop 0.01406$ \\[0.5ex] 
${2.5}$ &  $11.71910 \atop 26.53403$ & $8.68665 \atop 15.44292$ & $6.79656 \atop 4.46642$ & $3.96727 \atop 1.21787$ & $2.18844 \atop 0.42840$ & $1.23084 \atop 0.23206$ & $0.62982 \atop 0.16395$ & $ 0.35996 \atop 0.08478$ & $0.18528 \atop 0.05083 $ & $0.13112 \atop 0.03633$ & $0.09397 \atop 0.02807$ \\[0.5ex] 
${3}$ &  $11.23635 \atop 23.03320$ & $10.18631 \atop 18.71290$ & $8.74483 \atop 8.33385$ & $6.31636 \atop 3.35711$ & $ 6.33024 \atop 1.24416$ & $3.48461 \atop 0.60680$ & $2.84763 \atop 0.36062$ & $1.35829 \atop 0.20265 $ & $0.62283 \atop 0.11569 $ & $0.50274 \atop 0.08699$ & $0.33681\atop 0.05869 $ \\[0.5ex] 
\end{tabular}
}
\caption{Mean square errors of $\hat{\alpha}_{M}$ (top raw) and $\hat{\alpha}_{L}$ (bottom raw) for $\mathrm{GvMF}_{1,3}$ distribution, for different values of $\alpha$ (rows) and $\kappa$ (columns)}
\label{MLET1n1000csm}
\end{table}
\end{center}



\begin{center}
\begin{table}[h]
{
\footnotesize
\begin{tabular}{cccccccccccc}
 & ${0.1}$ & ${0.5}$ & ${1}$  & ${1.5}$ & ${2}$ & ${2.5}$& ${3}$ & ${4}$ & ${5}$ & ${6}$ & ${7}$ \\[0.5ex] 
${0.5}$ & $ 0.71826 \atop 0.37326 $ & $ 0.01899 \atop 0.01717 $ & $ 0.01699 \atop 0.01598 $ & $ 0.01898 \atop 0.01727 $ & $ 0.02602 \atop 0.02396 $ & $ 0.03563 \atop 0.03306 $ & $ 0.03419 \atop 0.04987 $ & $ 0.03749 \atop 0.07889 $ & $ 0.04394 \atop 0.10651 $ & $ 0.06683 \atop 0.13889 $ & $ 0.07928 \atop 0.17365 $ \\[0.5ex] 
${1}$ & $1.94226 \atop 2.46363 $ & $0.22319 \atop 0.11194 $ & $ 0.07076 \atop 0.05895 $ & $ 0.06534 \atop 0.05552 $ & $ 0.06965 \atop 0.05215 $ & $ 0.06449 \atop 0.05029 $ & $ 0.06737 \atop 0.05277 $ & $ 0.08770 \atop 0.06947 $ & $ 0.11201 \atop 0.08617 $ & $ 0.19431 \atop 0.10606 $ & $ 0.31639 \atop 0.16075 $ \\[0.5ex] 
${1.5}$ &  $1.53572 \atop 3.63032 $ & $ 2.11049 \atop 1.19008 $ & $ 0.64790 \atop 0.19097 $ & $ 0.26637 \atop 0.14496 $ & $ 0.25165 \atop 0.12205 $ & $ 0.23415 \atop 0.11508 $ & $ 0.22145 \atop 0.11355 $ & $ 0.20812 \atop 0.10874 $ & $ 0.20797 \atop 0.12384 $ & $ 0.23012 \atop 0.12786 $ & $ 0.24837 \atop 0.14523 $ \\[0.5ex] 
${2}$ &  $1.58362 \atop 3.87311 $ & $ 3.68444 \atop 3.19240 $ & $ 4.15468 \atop 0.92650 $ & $ 1.72159 \atop 0.39620 $ & $ 1.18887 \atop 0.29374 $ & $ 1.05376 \atop 0.27322 $ & $ 0.82951 \atop 0.24016 $ & $ 0.63399 \atop 0.24388 $ & $ 0.54231 \atop 0.20779 $ & $ 0.49912 \atop 0.19860 $ & $ 0.52341 \atop 0.21512 $ \\[0.5ex] 
${2.5}$ &  $1.34224 \atop 4.13401 $ & $ 2.17772 \atop 4.02490 $ & $ 6.54718 \atop 2.52365 $ & $ 7.27056 \atop 1.19600 $ & $ 5.81158 \atop 0.61967 $ & $ 4.41608 \atop 0.50278 $ & $ 2.47149 \atop 0.50154 $ & $ 2.26686 \atop 0.41390 $ & $ 1.43704 \atop 0.35365 $ & $ 1.34411 \atop 0.35024 $ & $ 1.22740 \atop 0.33833 $ \\[0.5ex] 
${3}$ &  $ 1.08917 \atop 3.97375 $ & $ 1.87215 \atop 4.29139 $ & $ 5.16488 \atop 3.63187 $ & $ 7.11001 \atop 2.47833 $ & $ 13.3072 \atop 1.40486 $ & $ 10.1227 \atop 0.99006 $ & $ 11.0565 \atop 0.80562 $ & $ 7.43739 \atop 0.74920 $ & $ 4.25858 \atop 0.62553 $ & $ 4.56458 \atop 0.61910 $ & $ 3.74400 \atop 0.55211 $ 
\end{tabular}}
\caption{Mean square error of $\hat{\kappa}_{M}$ (top raw) and $\hat{\kappa}_{L}$ (bottom raw) for $\mathrm{GvMF}_{1,3}$ distribution, for different values of $\alpha$ (rows) and $\kappa$ (columns)}
\label{MLET2n1000csm}
\end{table}
\end{center}


\begin{center}
\begin{table}[h]
\footnotesize
{
\begin{tabular}{cccccccccccc}
 & ${0.1}$ & ${0.5}$ & ${1}$  & ${1.5}$ & ${2}$ & ${2.5}$& ${3}$ & ${4}$ & ${5}$ & ${6}$ & ${7}$ \\[0.5ex] 
${0.5}$ & $2.15430 \atop 4.05118 $ & $ 0.04439 \atop 0.03719 $ & $ 0.00892 \atop 0.00571 $ & $ 0.00384 \atop 0.00244 $ & $ 0.00240 \atop 0.00131 $ & $ 0.00200 \atop 0.00111 $ & $ 0.00176 \atop 0.00082 $ & $ 0.00174 \atop 0.00069 $ & $ 0.00208 \atop 0.00073 $ & $ 0.00112 \atop 0.00072 $ & $ 0.00053 \atop 0.00079 $ \\[0.5ex] 
${1}$ & $2.43731 \atop 6.35329 $ & $ 0.11476 \atop 0.15498 $ & $ 0.02662 \atop 0.02641 $ & $ 0.01238 \atop 0.01200 $ & $ 0.00779 \atop 0.00721 $ & $ 0.00620 \atop 0.00555 $ & $ 0.00508 \atop 0.00469 $ & $ 0.00415 \atop 0.00369 $ & $ 0.00424 \atop 0.00357 $ & $ 0.00384 \atop 0.00341 $ & $ 0.00421 \atop 0.00369 $ \\[0.5ex] 
${1.5}$ &  $2.82117 \atop 8.76630 $ & $ 0.20015 \atop 0.30670 $ & $ 0.05429 \atop 0.05952 $ & $ 0.02785 \atop 0.02937 $ & $ 0.01742 \atop 0.01765 $ & $ 0.01432 \atop 0.01473 $ & $ 0.01291 \atop 0.01303 $ & $ 0.00981 \atop 0.01006 $ & $ 0.00956 \atop 0.01004 $ & $ 0.00925 \atop 0.00962 $ & $ 0.01026 \atop 0.01057 $ \\[0.5ex] 
${2}$ &  $2.55262 \atop 9.04940 $ & $ 0.30415 \atop 0.40585 $ & $ 0.09795 \atop 0.09406 $ & $ 0.05169 \atop 0.05020 $ & $ 0.03786 \atop 0.03618 $ & $ 0.02962 \atop 0.02640 $ & $ 0.02579 \atop 0.02466 $ & $ 0.02137 \atop 0.01993 $ & $ 0.02306 \atop 0.02191 $ & $ 0.02314 \atop 0.02198 $ & $ 0.02297 \atop 0.02143 $ \\[0.5ex] 
${2.5}$ &  $2.27668 \atop 8.21909 $ & $ 0.56165 \atop 0.51124 $ & $ 0.12324 \atop 0.12344 $ & $ 0.08668 \atop 0.06832 $ & $ 0.06643 \atop 0.05349 $ & $ 0.05643 \atop 0.04539 $ & $ 0.05414 \atop 0.04304 $ & $ 0.05298 \atop 0.04235 $ & $ 0.04757 \atop 0.03748 $ & $ 0.05055 \atop 0.04087 $ & $ 0.04818 \atop 0.03752 $ \\[0.5ex] 
${3}$ &  $2.64239 \atop 8.45839 $ & $ 0.71927 \atop 0.52430 $ & $ 0.18736 \atop 0.16199 $ & $ 0.14068 \atop 0.09598 $ & $ 0.11660 \atop 0.08051 $ & $ 0.11100 \atop 0.07280 $ & $ 0.11534 \atop 0.07200 $ & $ 0.09450 \atop 0.06493 $ & $ 0.10280 \atop 0.07062 $ & $ 0.10102 \atop 0.06307 $ & $ 0.10246 \atop 0.06369 $ 
\end{tabular}}
\caption{Mean square error of $\hat{\alpha}_{M}$ (top raw) and $\hat{\alpha}_{L}$ (bottom raw) for $\mathrm{GvMF}_{2,3}$ distribution, for different values of $\alpha$ (rows) and $\kappa$ (columns)}
\label{alpha_hat_Type2}
\end{table}
\end{center}




\begin{center}
\begin{table}[h]
\footnotesize
{
\begin{tabular}{cccccccccccc}
 & ${0.1}$ & ${0.5}$ & ${1}$  & ${1.5}$ & ${2}$ & ${2.5}$& ${3}$ & ${4}$ & ${5}$ & ${6}$ & ${7}$ \\[0.5ex] 
${0.5}$ & $0.00308 \atop 0.00264 $ & $ 0.00593 \atop 0.00585 $ & $ 0.00788 \atop 0.00619 $ & $ 0.01080 \atop 0.00815 $ & $ 0.01751 \atop 0.01132 $ & $ 0.03210 \atop 0.02080 $ & $ 0.05088 \atop 0.02932 $ & $ 0.14597 \atop 0.06803 $ & $ 0.39689 \atop 0.16595 $ & $ 0.26720 \atop 0.29599 $ & $ 0.11059 \atop 0.54081 $ \\[0.5ex] 
${1}$ & $0.00290 \atop 0.00299 $ & $ 0.00460 \atop 0.00504 $ & $ 0.00611 \atop 0.00602 $ & $ 0.00912 \atop 0.00885 $ & $ 0.01508 \atop 0.01432 $ & $ 0.02487 \atop 0.02260 $ & $ 0.03871 \atop 0.03695 $ & $ 0.08568 \atop 0.07654 $ & $ 0.18878 \atop 0.16276 $ & $ 0.31815 \atop 0.28533 $ & $ 0.56937 \atop 0.50945 $ \\[0.5ex] 
${1.5}$ &  $0.00317 \atop 0.00352 $ & $ 0.00382 \atop 0.00433 $ & $ 0.00496 \atop 0.00497 $ & $ 0.00712 \atop 0.00715 $ & $ 0.01159 \atop 0.01170 $ & $ 0.02027 \atop 0.02050
 $ & $ 0.03192 \atop 0.03221 $ & $ 0.07540 \atop 0.07713 $ & $ 0.16707 \atop 0.17453 $ & $ 0.28675 \atop 0.29832 $ & $ 0.52890 \atop 0.54837 $ \\[0.5ex] 
${2}$ &  $0.00312 \atop 0.00379 $ & $ 0.00507 \atop 0.00594 $ & $ 0.00512 \atop 0.00515 $ & $ 0.00658 \atop 0.00663 $ & $ 0.01078 \atop 0.01090 $ & $ 0.01733 \atop 0.01729 $ & $ 0.02968 \atop 0.02992 $ & $ 0.07776 \atop 0.07590 $ & $ 0.17039 \atop 0.16294 $ & $ 0.33426 \atop 0.32125 $ & $ 0.54030 \atop 0.52428 $ \\[0.5ex] 
${2.5}$ &  $0.00298 \atop 0.00411 $ & $ 0.00725 \atop 0.00797 $ & $ 0.00705 \atop 0.00761 $ & $ 0.00885 \atop 0.00847 $ & $ 0.01145 \atop 0.01155 $ & $ 0.01878 \atop 0.01850 $ & $ 0.03171 \atop 0.03021 $ & $ 0.08975 \atop 0.07865 $ & $ 0.19040 \atop 0.16314 $ & $ 0.36456 \atop 0.29699 $ & $ 0.65539 \atop 0.52800 $ \\[0.5ex] 
${3}$ &  $0.00277 \atop 0.00449 $ & $ 0.01072 \atop 0.01004 $ & $ 0.01052 \atop 0.01033 $ & $ 0.01256 \atop 0.01179 $ & $ 0.01447 \atop 0.01420 $ & $ 0.02044 \atop 0.02058 $ & $ 0.03274 \atop 0.03070 $ & $ 0.09333 \atop 0.07711 $ & $ 0.24011 \atop 0.17748 $ & $ 0.45512 \atop 0.30782 $ & $ 0.87428 \atop 0.52693 $ 
\end{tabular}}
\caption{Mean square error of $\hat{\kappa}_{M}$ (top raw) and $\hat{\kappa}_{L}$ (bottom raw) for $\mathrm{GvMF}_{2,3}$ distribution, for different values of $\alpha$ (rows) and $\kappa$ (columns)}
\label{kappa_hat_Type2}
\end{table}
\end{center}


\begin{center}
\begin{table}[h]
\footnotesize
{
\begin{tabular}{cccccccccccc}
 & ${0.1}$ & ${0.5}$ & ${1}$  & ${1.5}$ & ${2}$ & ${2.5}$& ${3}$ & ${4}$ & ${5}$ & ${6}$ & ${7}$ \\[0.5ex] 
${0.5}$ & $2.29602 \atop 3.79114 $ & $ 0.18987 \atop 0.16463 $ & $ 0.05705 \atop 0.04162 $ & $ 0.02550 \atop 0.02490 $ & $ 0.01188 \atop 0.01907 $ & $ 0.00814 \atop 0.01631 $ & $ 0.00512 \atop 0.01637 $ & $ 0.00236 \atop 0.02073 $ & $ 0.00128 \atop 0.02427 $ & $ 0.00045 \atop 0.02521 $ & $ 0.00045 \atop 0.02548 $ \\[0.5ex] 
${1}$ & $1.89494 \atop 6.47041 $ & $ 0.72857 \atop 1.99071 $ & $ 0.23713 \atop 0.21501 $ & $ 0.11809 \atop 0.08130 $ & $ 0.07490 \atop 0.04956 $ & $ 0.05300 \atop 0.03309 $ & $ 0.03753 \atop 0.02432 $ & $ 0.02852 \atop 0.02189 $ & $ 0.01925 \atop 0.01923 $ & $ 0.01203 \atop 0.02272 $ & $ 0.00443 \atop 0.02594 $ \\[0.5ex] 
${1.5}$ &  $2.09251 \atop 12.7767 $ & $ 1.41746 \atop 7.55889 $ & $ 0.69305 \atop 1.11221 $ & $ 0.28007 \atop 0.28031 $ & $ 0.16580 \atop 0.14598 $ & $ 0.09577 \atop 0.08200 $ & $ 0.07204 \atop 0.06144 $ & $ 0.04872 \atop 0.04110 $ & $ 0.03733 \atop 0.03296 $ & $ 0.03224 \atop 0.02763 $ & $ 0.02760 \atop 0.02579 $ \\[0.5ex] 
${2}$ &  $2.18619 \atop 18.4855 $ & $ 2.16193 \atop 11.8352 $ & $ 1.34087 \atop 4.29336 $ & $ 0.66370 \atop 1.22665 $ & $ 0.37039 \atop 0.40504 $ & $ 0.20955 \atop 0.20974 $ & $ 0.15980 \atop 0.16143 $ & $ 0.09065 \atop 0.08807 $ & $ 0.05605 \atop 0.05484 $ & $ 0.04741 \atop 0.04622 $ & $ 0.03871 \atop 0.03623 $ \\[0.5ex] 
${2.5}$ &  $3.15268 \atop 20.7207 $ & $ 3.14653 \atop 17.0535 $ & $ 2.42570 \atop 7.75701 $ & $ 1.51950 \atop 3.11424 $ & $ 0.79157 \atop 1.16675 $ & $ 0.47397 \atop 0.52106 $ & $ 0.33253 \atop 0.34421 $ & $ 0.17312 \atop 0.18537 $ & $ 0.11837 \atop 0.12071 $ & $ 0.08002 \atop 0.08197 $ & $ 0.06680 \atop 0.06698 $ \\[0.5ex] 
${3}$ &  $4.56639 \atop 20.2178 $ & $ 4.87133 \atop 17.9717 $ & $ 4.04307 \atop 10.7576 $ & $ 2.64681 \atop 5.75811 $ & $ 1.72811 \atop 2.78415 $ & $ 1.07167 \atop 1.28941 $ & $ 0.70276 \atop 0.74024 $ & $ 0.33226 \atop 0.35797 $ & $ 0.20734 \atop 0.22610 $ & $ 0.15167 \atop 0.15511 $ & $ 0.11787 \atop 0.12132 $ 
\end{tabular}}
\caption{Mean square error of $\hat{\alpha}_{M}$ (top raw) and $\hat{\alpha}_{L}$ (bottom raw) for $\mathrm{GvMF}_{3,3}$ distribution, for different values of $\alpha$ (rows) and $\kappa$ (columns)}
\label{alpha_type3}
\end{table}
\end{center}


\begin{center}
\begin{table}[h]
\footnotesize
{
\begin{tabular}{cccccccccccc}
 & ${0.1}$ & ${0.5}$ & ${1}$  & ${1.5}$ & ${2}$ & ${2.5}$& ${3}$ & ${4}$ & ${5}$ & ${6}$ & ${7}$ \\[0.5ex] 
${0.5}$ & $0.29970 \atop 0.60213 $ & $ 0.06602 \atop 0.06010 $ & $ 0.05435 \atop 0.04156 $ & $ 0.05186 \atop 0.04885 $ & $ 0.04292 \atop 0.05101 $ & $ 0.04582 \atop 0.05939 $ & $ 0.04249 \atop 0.06571 $ & $ 0.05097 \atop 0.10154 $ & $ 0.05318 \atop 0.12560 $ & $ 0.06447 \atop 0.15621 $ & $ 0.08802 \atop 0.18777 $ \\[0.5ex] 
${1}$ & $0.19968 \atop 0.89717 $ & $ 0.14994 \atop 0.64104 $ & $ 0.12997 \atop 0.14258 $ & $ 0.13578 \atop 0.10734 $ & $ 0.14216 \atop 0.10834 $ & $ 0.14670 \atop 0.10368 $ & $ 0.12929 \atop 0.09266 $ & $ 0.14336 \atop 0.11498 $ & $ 0.14140 \atop 0.12950 $ & $ 0.14276 \atop 0.17055 $ & $ 0.10894 \atop 0.19490 $ \\[0.5ex] 
${1.5}$ &  $0.18726 \atop 2.12072 $ & $ 0.22813 \atop 1.96361 $ & $ 0.22610 \atop 0.54431 $ & $ 0.21028 \atop 0.25810 $ & $ 0.22102 \atop 0.22148 $ & $ 0.19741 \atop 0.18410 $ & $ 0.19829 \atop 0.17962 $ & $ 0.21212 \atop 0.18814 $ & $ 0.23031 \atop 0.21156 $ & $ 0.24706 \atop 0.21623 $ & $ 0.24683 \atop 0.23138 $ \\[0.5ex] 
${2}$ &  $0.13815 \atop 3.07854 $ & $ 0.15945 \atop 	2.32459 $ & $ 0.29881 \atop 1.70401 $ & $ 0.35199 \atop 0.86157 $ & $ 0.36035 \atop 0.45843 $ & $ 0.31712 \atop 0.35382 $ & $ 0.33244 \atop 0.35891 $ & $ 0.31035 \atop 0.30941 $ & $ 0.30316 \atop 0.30274 $ & $ 0.32061 \atop 0.31870 $ & $ 0.31060 \atop 0.29598 $ \\[0.5ex] 
${2.5}$ &  $0.17697 \atop 3.89041 $ & $ 0.15577 \atop 3.15115 $ & $ 0.37208 \atop 2.58508 $ & $ 0.55335 \atop 1.76622 $ & $ 0.55512 \atop 0.99471 $ & $ 0.51438 \atop 0.65380 $ & $ 0.51368 \atop 0.58423 $ & $ 0.48383 \atop 0.53832 $ & $ 0.49249 \atop 0.50869 $ & $ 0.42680 \atop 0.43904 $ & $ 0.47090 \atop 0.47885 $ \\[0.5ex] 
${3}$ &  $0.17079 \atop 4.23216 $ & $ 0.14980 \atop 3.34039 $ & $ 0.44026 \atop 3.09945 $ & $ 0.67499 \atop 2.79798 $ & $ 0.86090 \atop 1.99287 $ & $ 0.84880 \atop 1.32447 $ & $ 0.85450 \atop 1.07203 $ & $ 0.69410 \atop 0.79730 $ & $ 0.67099 \atop 0.75204 $ & $ 0.66948 \atop 0.69418 $ & $ 0.68146 \atop 0.69721 $
\end{tabular}}
\caption{Mean square error of $\hat{\kappa}_{M}$ (top raw) and $\hat{\kappa}_{L}$ (bottom raw) for $\mathrm{GvMF}_{3,3}$ distribution, for different values of $\alpha$ (rows) and $\kappa$ (columns)}
\label{kappa_type3}
\end{table}
\end{center}



\begin{figure}[h]
     \centering
     \begin{subfigure}[b]{0.32\textwidth}
         \centering
         \includegraphics[width=\textwidth]{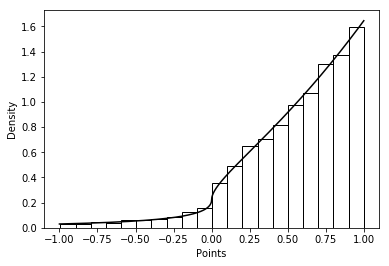}
         \caption{$\kappa = 1$}
         \label{fig:a_05_k_1}
     \end{subfigure}
     \hfill
     \begin{subfigure}[b]{0.32\textwidth}
         \centering
         \includegraphics[width=\textwidth]{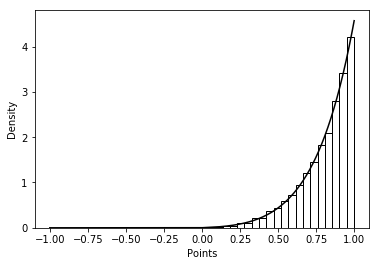}
         \caption{$\kappa = 4$}
         \label{fig:a_05_k_4}
     \end{subfigure}
     \hfill
     \begin{subfigure}[b]{0.32\textwidth}
         \centering
         \includegraphics[width=\textwidth]{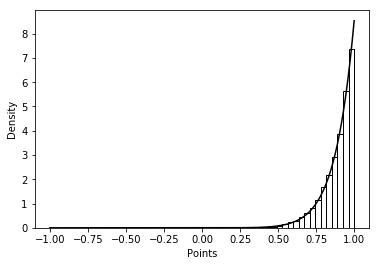}
         \caption{$\kappa = 8$}
         \label{fig:a_05_k_8}
         \end{subfigure}
    \caption{Density $f_1$ from \eqref{f1} for $\alpha =0.5$ and  different values of $\kappa $.}
    \label{fig:type1_a_05}
        
\end{figure}

\begin{figure}[h]
     \centering
     \begin{subfigure}[b]{0.32\textwidth}
         \centering
         \includegraphics[width=\textwidth]{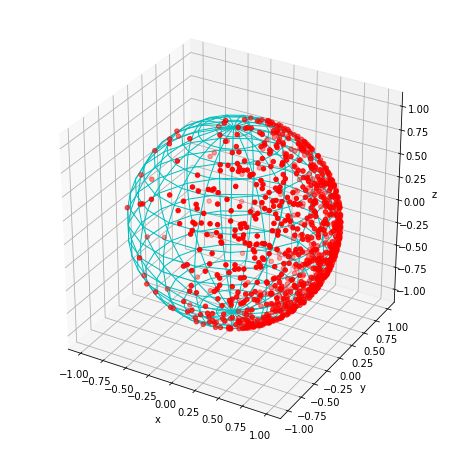}
         \caption{$\kappa = 1$}
         \label{fig:Type1a_05_k_1_sphere}
     \end{subfigure}
     \hfill
     \begin{subfigure}[b]{0.32\textwidth}
         \centering
         \includegraphics[width=\textwidth]{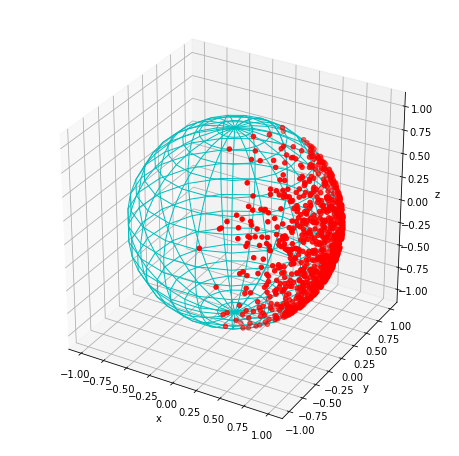}
         \caption{$\kappa = 4$}
         \label{fig:Type1a_05_k_4_sphere}
     \end{subfigure}
     \hfill
     \begin{subfigure}[b]{0.32\textwidth}
         \centering
         \includegraphics[width=\textwidth]{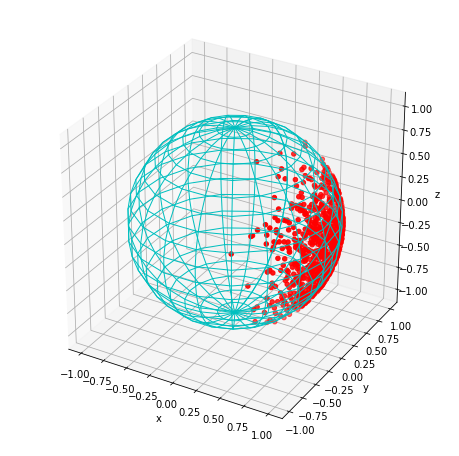}
         \caption{$\kappa = 8$}
         \label{fig:Type1a_05_k_8_sphere}
     \end{subfigure}
    \caption{Realisations of $X\sim \mathrm{GvMF_{1,3}}(\kappa,\alpha,\mu)$ with  $\alpha =0.5$ and  different values of $\kappa$.}
    \label{fig:S:type1_a_05}
        
\end{figure}




\begin{figure}[h]
     \centering
     \begin{subfigure}[b]{0.32\textwidth}
         \centering
         \includegraphics[width=\textwidth]{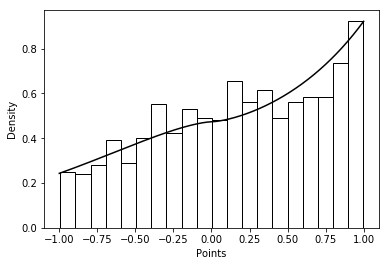}
         \caption{$\kappa = 1$}
         \label{fig:a_15_k_1}
     \end{subfigure}
     \begin{subfigure}[b]{0.32\textwidth}
         \centering
         \includegraphics[width=\textwidth]{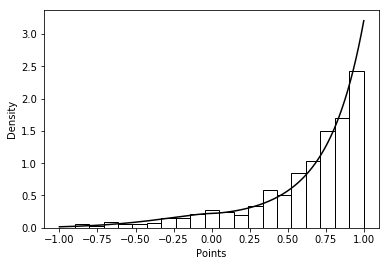}
         \caption{$\kappa = 4$}
         \label{fig:a_15_k_4}
     \end{subfigure}
     \hfill
     \begin{subfigure}[b]{0.32\textwidth}
         \centering
         \includegraphics[width=\textwidth]{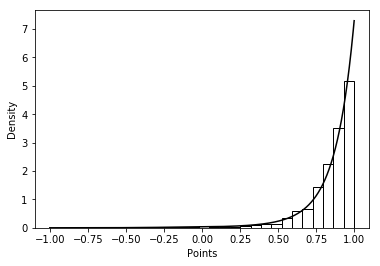}
         \caption{$\kappa = 8$}
         \label{fig:a_15_k_8}
         \end{subfigure}
    \caption{Density $f_1$ from \eqref{f1} for $\alpha =1.5$ and  different values of $\kappa $.}
    \label{fig:type1_a_15}
\end{figure}


\begin{figure}[h]
     \centering
     \begin{subfigure}[b]{0.32\textwidth}
         \centering
         \includegraphics[width=\textwidth]{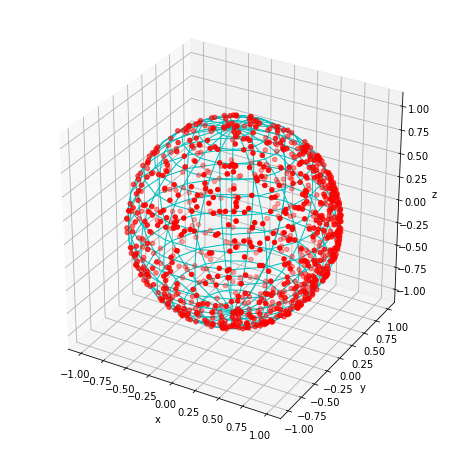}
         \caption{$\kappa = 1$}
         \label{fig:Type1a_15_k_1_sphere}
     \end{subfigure}
     \hfill
     \begin{subfigure}[b]{0.32\textwidth}
         \centering
         \includegraphics[width=\textwidth]{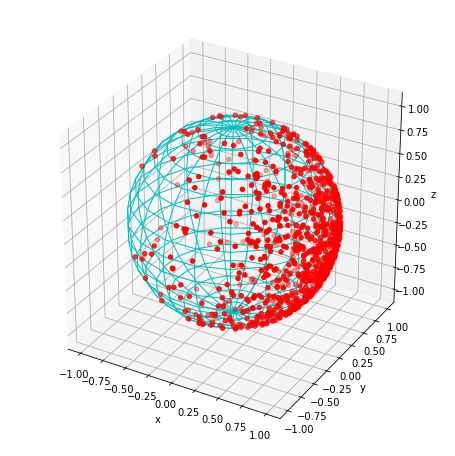}
         \caption{$\kappa = 4$}
         \label{fig:Type1a_15_k_4_sphere}
     \end{subfigure}
     \hfill
     \begin{subfigure}[b]{0.32\textwidth}
         \centering
         \includegraphics[width=\textwidth]{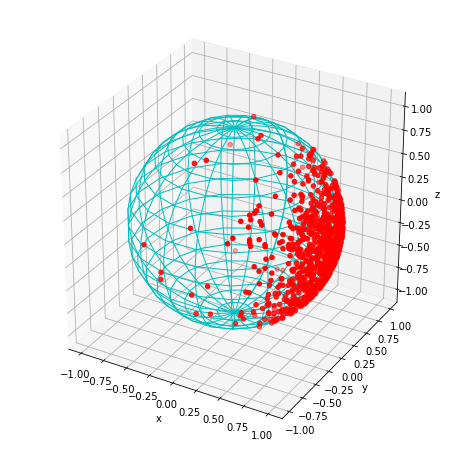}
         \caption{$\kappa = 8$}
         \label{fig:Type1a_15_k_8}
     \end{subfigure}
    \caption{Realisations of $X\sim \mathrm{GvMF_{1,3}}(\kappa,\alpha,\mu)$ with  $\alpha =1.5$ and  different values of $\kappa$.}
    \label{fig:type1_a_15_sphere}
        
\end{figure}



\begin{figure}[h]
     \centering
     \begin{subfigure}[b]{0.32\textwidth}
         \centering
         \includegraphics[width=\textwidth]{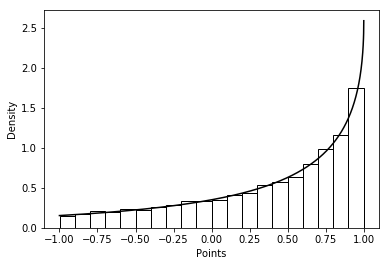}
         \caption{$\kappa = 1$}
         \label{fig:Type2a_05_k_1}
     \end{subfigure}
     \hfill
     \begin{subfigure}[b]{0.32\textwidth}
         \centering
         \includegraphics[width=\textwidth]{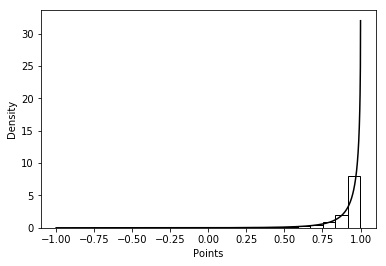}
         \caption{$\kappa = 4$}
         \label{fig:Type2a_05_k_4}
     \end{subfigure}
     \hfill
     \begin{subfigure}[b]{0.32\textwidth}
         \centering
         \includegraphics[width=\textwidth]{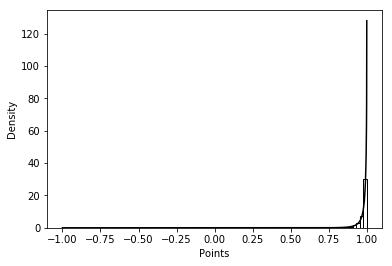}
         \caption{$\kappa = 8$}
         \label{fig:Type2a_05_k_8}
         \end{subfigure}
    \caption{Density $f_2$ from \eqref{f2} for $\alpha = 0.5$ and  different values of $\kappa $.}
    \label{fig:type2_a_05}
\end{figure}


\begin{figure}[h]
     \centering
     \begin{subfigure}[b]{0.32\textwidth}
         \centering
         \includegraphics[width=\textwidth]{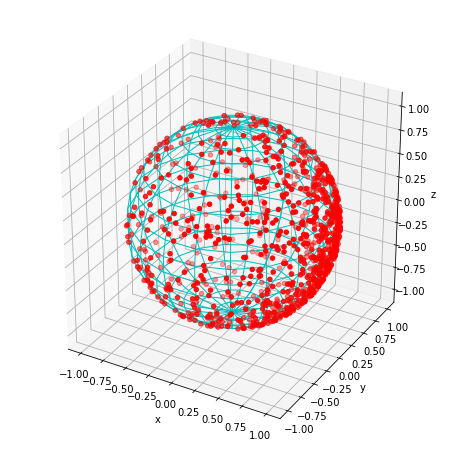}
         \caption{$\kappa = 1$}
         \label{fig:Type2a_05_k_1_sphere}
     \end{subfigure}
     \hfill
     \begin{subfigure}[b]{0.32\textwidth}
         \centering
         \includegraphics[width=\textwidth]{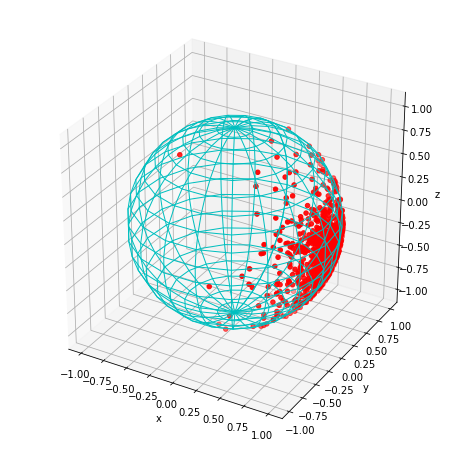}
         \caption{$\kappa = 4$}
         \label{fig:Type2a_05_k_4_sphere}
     \end{subfigure}
     \hfill
     \begin{subfigure}[b]{0.32\textwidth}
         \centering
         \includegraphics[width=\textwidth]{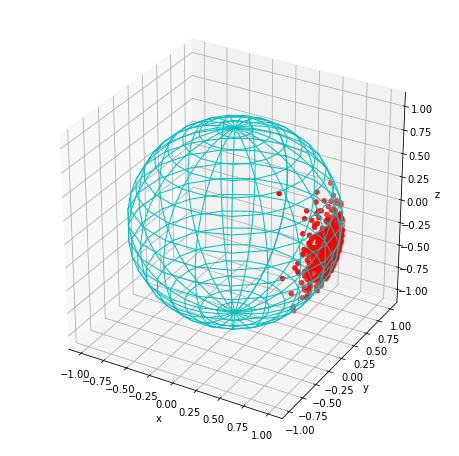}
         \caption{$\kappa = 8$}
         \label{fig:Type2a_05_k_8_sphere}
     \end{subfigure}
    \caption{Realisations of $X\sim \mathrm{GvMF_{2,3}}(\kappa,\alpha,\mu)$ with  $\alpha =0.5$ and  different values of $\kappa$.}
    \label{fig:type2_a_05_sphere}
        
\end{figure}


\begin{figure}[h]
     \centering
     \begin{subfigure}[b]{0.32\textwidth}
         \centering
         \includegraphics[width=\textwidth]{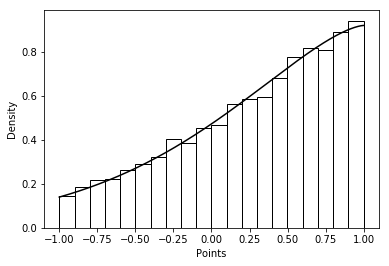}
         \caption{$\kappa = 1$}
         \label{fig:Type2a_15_k_1}
     \end{subfigure}
     \hfill
     \begin{subfigure}[b]{0.32\textwidth}
         \centering
         \includegraphics[width=\textwidth]{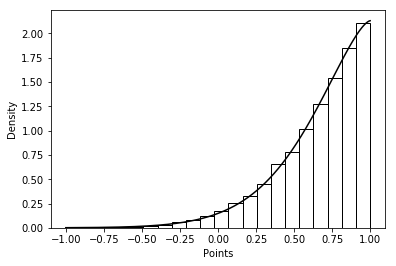}
         \caption{$\kappa = 4$}
         \label{fig:Type2a_15_k_4}
     \end{subfigure}
     \hfill
     \begin{subfigure}[b]{0.32\textwidth}
         \centering
         \includegraphics[width=\textwidth]{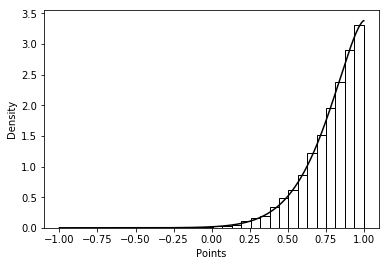}
         \caption{$\kappa = 8$}
         \label{fig:Type2a_15_k_8}
         \end{subfigure}
    \caption{Density $f_2$ from \eqref{f2} for $\alpha = 1.5$ and  different values of $\kappa $.}
    \label{fig:type2_a_15}
        
\end{figure}


\begin{figure}[h]
     \centering
     \begin{subfigure}[b]{0.32\textwidth}
         \centering
         \includegraphics[width=\textwidth]{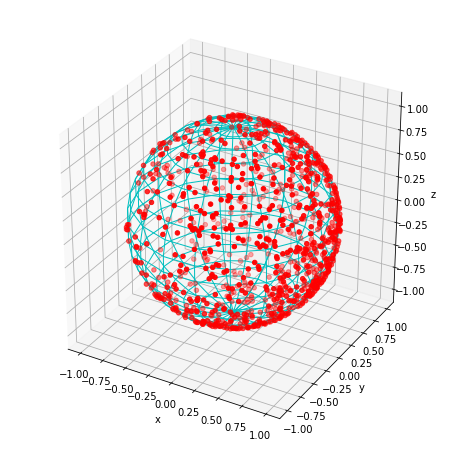}
         \caption{$\kappa = 1$}
         \label{fig:Type2a_15_k_1_sphere}
     \end{subfigure}
     \hfill
     \begin{subfigure}[b]{0.32\textwidth}
         \centering
         \includegraphics[width=\textwidth]{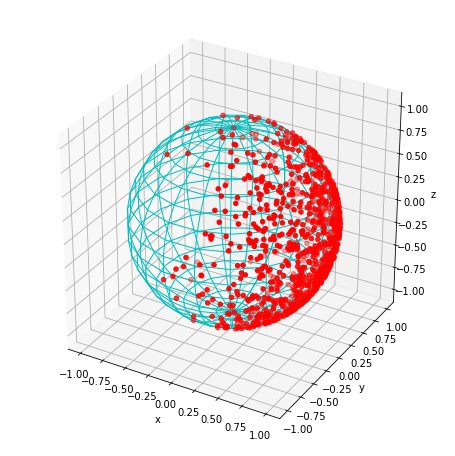}
         \caption{$\kappa = 4$}
         \label{fig:Type2a_15_k_4_sphere}
     \end{subfigure}
     \hfill
     \begin{subfigure}[b]{0.32\textwidth}
         \centering
         \includegraphics[width=\textwidth]{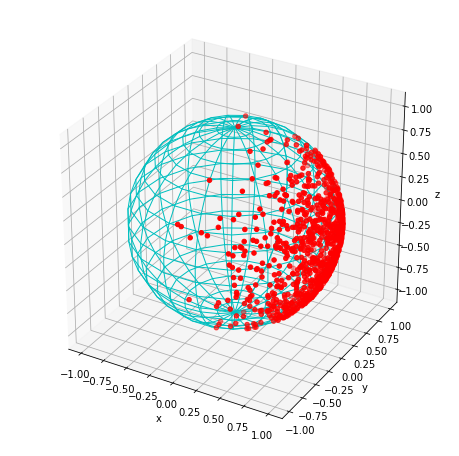}
         \caption{$\kappa = 8$}
         \label{fig:Type2a_15_k_8_sphere}
     \end{subfigure}
    \caption{Realisations of $X\sim \mathrm{GvMF_{2,3}}(\kappa,\alpha,\mu)$ with  $\alpha =1.5$ and  different values of $\kappa$.}
    \label{fig:type2_a_15_sphere}
        
\end{figure}



\begin{figure}[h]
     \centering
     \begin{subfigure}[b]{0.32\textwidth}
         \centering
         \includegraphics[width=\textwidth]{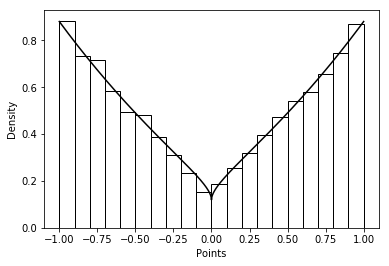}
         \caption{$\kappa = 1$}
         \label{fig:Type3a_05_k_1}
     \end{subfigure}
     \hfill
     \begin{subfigure}[b]{0.32\textwidth}
         \centering
         \includegraphics[width=\textwidth]{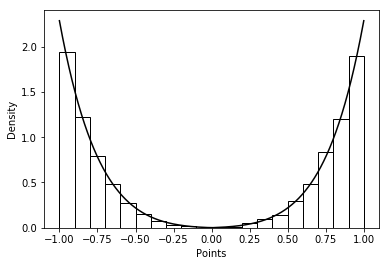}
         \caption{$\kappa = 4$}
         \label{fig:Type3a_05_k_4}
     \end{subfigure}
     \hfill
     \begin{subfigure}[b]{0.32\textwidth}
         \centering
         \includegraphics[width=\textwidth]{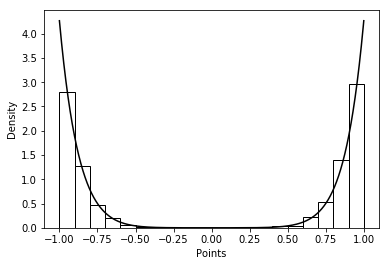}
         \caption{$\kappa = 8$}
         \label{fig:Type3a_05_k_8}
         \end{subfigure}
    \caption{Density $f_3$ from \eqref{f3} for $\alpha = 0.5$ and  different values of $\kappa$.}
    \label{fig:type3_a_05}
        
\end{figure}


\begin{figure}[h]
     \centering
     \begin{subfigure}[b]{0.32\textwidth}
         \centering
         \includegraphics[width=\textwidth]{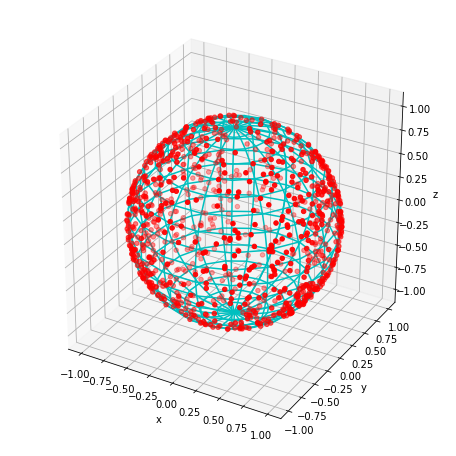}
         \caption{$\kappa = 1$}
         \label{fig:Type3a_05_k_1_sphere}
     \end{subfigure}
     \hfill
     \begin{subfigure}[b]{0.32\textwidth}
         \centering
         \includegraphics[width=\textwidth]{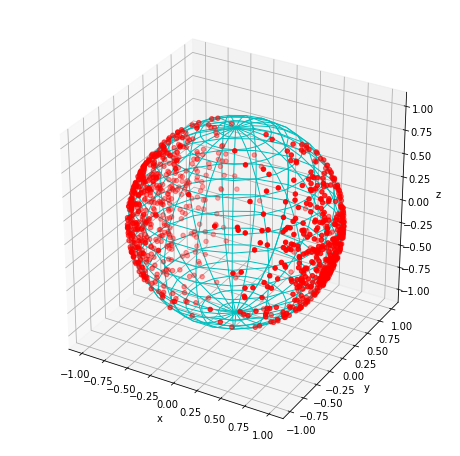}
         \caption{$\kappa = 4$}
         \label{fig:Type3a_05_k_4_sphere}
     \end{subfigure}
     \hfill
     \begin{subfigure}[b]{0.32\textwidth}
         \centering
         \includegraphics[width=\textwidth]{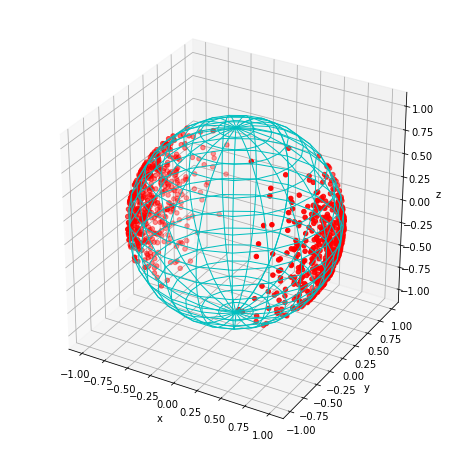}
         \caption{$\kappa = 8$}
         \label{fig:Type3a_05_k_8_sphere}
     \end{subfigure}
    \caption{Realisations of $X\sim \mathrm{GvMF_{3,3}}(\kappa,\alpha,\mu)$ with  $\alpha =0.5$ and  different values of $\kappa$.}
    \label{fig:type3_a_05_sphere}
        
\end{figure}


\begin{figure}[h]
     \centering
     \begin{subfigure}[b]{0.32\textwidth}
         \centering
         \includegraphics[width=\textwidth]{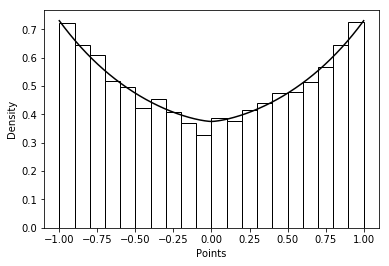}
         \caption{$\kappa = 1$}
         \label{fig:Type3a_15_k_1}
     \end{subfigure}
     \hfill
     \begin{subfigure}[b]{0.32\textwidth}
         \centering
         \includegraphics[width=\textwidth]{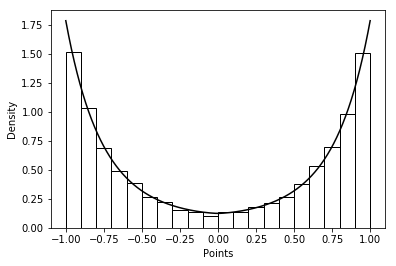}
         \caption{$\kappa = 4$}
         \label{fig:Type3a_15_k_4}
     \end{subfigure}
     \hfill
     \begin{subfigure}[b]{0.32\textwidth}
         \centering
         \includegraphics[width=\textwidth]{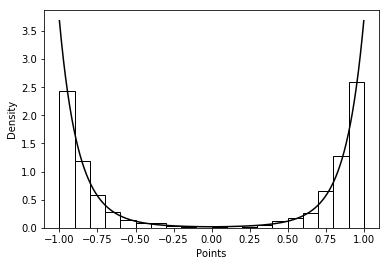}
         \caption{$\kappa = 8$}
         \label{fig:Type3a_15_k_8}
         \end{subfigure}
    \caption{Density $f_3$ from \eqref{f3} for $\alpha = 1.5$ and  different values of $\kappa$.}
    \label{fig:type3_a_15}
        
\end{figure}


\begin{figure}[h]
     \centering
     \begin{subfigure}[b]{0.32\textwidth}
         \centering
         \includegraphics[width=\textwidth]{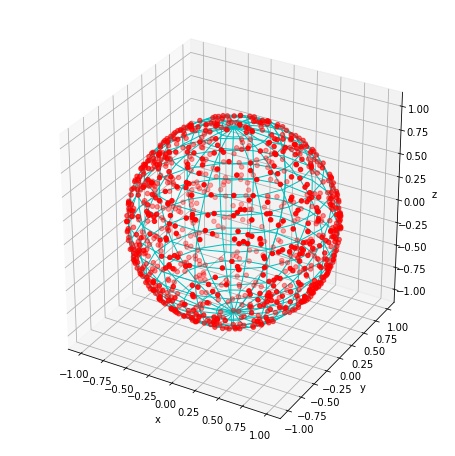}
         \caption{$\kappa = 1$}
         \label{fig:Type3a_15_k_1_sphere}
     \end{subfigure}
     \hfill
     \begin{subfigure}[b]{0.32\textwidth}
         \centering
         \includegraphics[width=\textwidth]{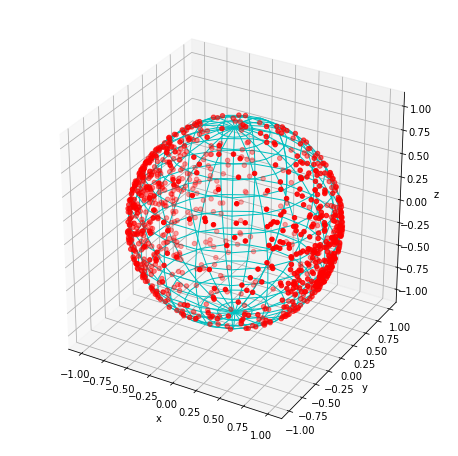}
         \caption{$\kappa = 4$}
         \label{fig:Type3a_15_k_2_sphere}
     \end{subfigure}
     \hfill
     \begin{subfigure}[b]{0.32\textwidth}
         \centering
         \includegraphics[width=\textwidth]{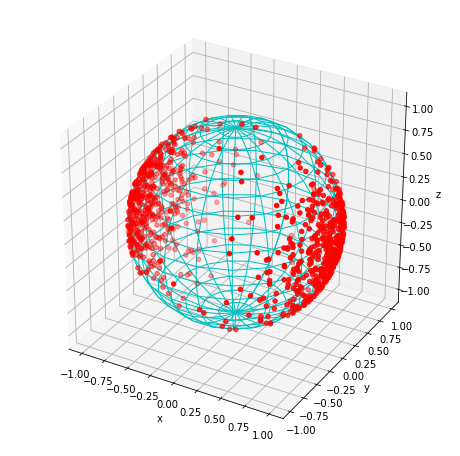}
         \caption{$\kappa = 8$}
         \label{fig:Type3a_15_k_4_sphere}
     \end{subfigure}
    \caption{Realisations of $X\sim \mathrm{GvMF_{3,3}}(\kappa,\alpha,\mu)$ with  $\alpha =1.5$ and  different values of $\kappa$.}
    \label{fig:type3_a_15_sphere}
        
\end{figure}

\end{document}